\newtheorem*{maintheorem*}{Main Theorem}
\newtheorem*{corollary*}{Corollary}
\newtheorem{theorem}{Theorem}[section]
\newtheorem{corollary}[theorem]{Corollary}
\newtheorem{lemma}[theorem]{Lemma}
\newtheorem{proposition}[theorem]{Proposition}
\newtheorem*{proposition*}{Proposition}
\newtheorem{question}[theorem]{Question}
\newtheorem*{question*}{Question}
\newtheorem*{claim*}{Claim}
\theoremstyle{definition}
\newtheorem{definition}[theorem]{Definition}
\newtheorem{remark}[theorem]{Remark}
\newtheorem{definitiontheorem}[theorem]{Definition-Theorem}
\theoremstyle{remark}
\newtheorem{condition}[theorem]{Condition}
\numberwithin{equation}{theorem}
\renewcommand{\mod}{\operatorname{mod}}
\newcommand{\proj}{\operatorname{proj}}
\newcommand{\End}{\operatorname{End}}
\newcommand{\Hom}{\operatorname{Hom}}
\newcommand{\add}{\operatorname{\mathsf{add}}}
\newcommand{\Ext}{\operatorname{Ext}}
\newcommand{\Tr}{\operatorname{Tr}}
\newcommand{\Db}{\mathsf{D}^{\rm b}}
\newcommand{\Kb}{\mathsf{K}^{\rm b}}
\newcommand{\T}{\mathbb{T}}
\newcommand{\C}{\mathcal{C}}
\renewcommand{\P}{\mathbb{P}}
\newcommand{\ORA}{\overrightarrow}
\renewcommand{\H}{\mathcal{H}}
\newcommand{\silt}{\operatorname{\mathsf{silt}}}
\newcommand{\tsilt}{\operatorname{\mathsf{2silt}}}
\newcommand{\sttilt}{\operatorname{\mathsf{s\tau -tilt}}}
\newcommand{\stitilt}{\operatorname{\mathsf{s\tau^{-} -tilt}}}
\newcommand{\thick}{\operatorname{\mathsf{thick}}}
\newcommand{\trigid}{\operatorname{\mathsf{\tau\text{-}rigid}}}
\newcommand{\supp}{\operatorname{Supp}}
\newcommand{\fac}{\operatorname{Fac}}
\newcommand{\dpr}{\operatorname{dp}}
\newcommand{\Z}{{\mathbb{Z}}}
\renewcommand{\P}{\mathbb{P}}
\newcommand{\Rad}{\operatorname{rad}}
\newcommand{\old}[1]{{\color{red}#1}}
\begin{document}
\title[Sharing the support $\tau$-tilting poset with tree quiver algebras]
{Algebras sharing the same support $\tau$-tilting poset with tree quiver algebras}

\author{Takuma Aihara}
\address{Department of Mathematics, Tokyo Gakugei University,
4-1-1 Nukuikita-machi, Koganei, Tokyo 184-8501, Japan}
\email{aihara@u-gakugei.ac.jp}

\author{Ryoichi Kase}
\address{Department of mathematics, Nara Women's University, Kitauoya-Nishimachi, Nara city, Nara 630-8506, Japan}
\email{r-kase@cc.nara-wu.ac.jp}

\keywords{representation of quivers, support $\tau$-tilting module, support $\tau$-tilting poset, silting complex}
\thanks{2010 {\em Mathematics Subject Classification.} Primary 16G10; Secondary 16G20, 06A07}
\thanks{The first author was supported by Grant-in-Aid for Young Scientists 15K17516.}

\begin{abstract}
Happel and Unger reconstructed hereditary algebras from their posets of tilting modules.
Inspired by this result, we try removing the assumption to be hereditary.
However, it would be unfortunately fail in general: e.g. every selfinjective algebra has the poset consisting of only one point.
Therefore, we should consider a generalization of the Happel-Unger's result for posets of support $\tau$-tilting modules,
which contains those of tilting modules.
In this paper, we spotlight finite dimensional algebras
whose support $\tau$-tilting posets coincide with those of tree quiver algebras and give a full characterization of such algebras.
\end{abstract}

\maketitle

\section*{Introduction}
\if0
Tilting theory first appeared in an article by S.~Brenner and M.C.R.~Butler~\cite{BB}. 
In that article the notion of a tilting module for finite dimensional algebras was introduced. 
Tilting theory  now appears in many areas of mathematics, for example algebraic geometry, 
theory of algebraic groups and algebraic topology. Let $T$ be a tilting module 
for a finite dimensional algebra $\Lambda$ i.e. $T$ satisfies following conditions:
\begin{enumerate}[(i)]
\item  The projective dimension of $T$ is at most $1$, 
\item $\Ext_{\Lambda}^{1}(T,T)=0$,
\item there exists an exact sequence $0\to \Lambda\to T_0\to T_1\to 0$ with $T_0,T_1\in \add T,$
where $\add T$ is an additive closure of $T$.  
\end{enumerate} 
 Then D.~Happel showed that the two bounded derived categories
$\Db(\mod \Lambda)$ and $\Db(\mod \Gamma)$
are equivalent as a triangulated category where $\Gamma:=\End_{\Lambda}(T)$ \cite{H}.
 Therefore, to obtain many tilting modules is an important problem for representation theory of finite dimensional algebras. 

Tilting mutation introduced by C.~Riedtmann and A.~Schofield~\cite{RS} is an approach to this problem. It is an operation which gives a new tilting module from given one by replacing an
 indecomposable direct summand.
  They also introduced  a tilting quiver whose vertices are 
  (isomorphism classes of) basic tilting modules and arrows correspond to mutations.  
  D.~Happel and L.~Unger defined a partial order on the set of basic tilting modules 
   and showed that the tilting quiver coincides with the Hasse quiver of this poset~\cite{HU1,HU2}. 
   However, tilting mutation is often impossible depending on a choice of an indecomposable direct summand.
    Support $\tau$-tilting modules introduced by 
   T.~Adachi, O.~Iyama and I.~Reiten~\cite{AIR} are generalization of tilting modules.
    They showed that a mutation (resp. a partial order) on the set of (isomorphism classes of) basic tilting modules
    is extended as an operation (resp. a partial order) on the set of 
   (isomorphism classes of) support $\tau$-tilting modules.
    They also showed that support $\tau$-tilting mutation has following nice properties:
    \begin{itemize}  
\item Support $\tau$-tilting mutation is always possible.
\item Support $\tau$-tilting quiver coincides with the Hasse quiver of the poset of support $\tau$-tilting modules.
\end{itemize}

Then the following is an interesting question on theory of tilting-mutation.
\begin{question} Can we reconstruct algebras from their poset of (support $\tau$)-tilting modules?
\end{question}
For this question, D. Happel and L. Unger gives a fascinating result.
\begin{theorem}\cite[Corollary 6.5]{HU3}
Let $Q$ be a finite connected acyclic quiver without multiple arrows. Then $\Lambda=kQ$ is uniquely determined
 (up to isomorphism) by their poset of tilting modules.
\end{theorem}

In this paper, we give a necessary and sufficient condition for that the poset of support $\tau$-tilting modules 
of a finite dimensional algebra $\Lambda$ coincides to that of tree algebras.
\fi

In 1970's, tilting theory appeared to describe the transition of the module category (and derived category) structure of two (finite dimensional) algebras by using tilting modules.
It turned out that the class of such modules was very crucial, because they induces derived equivalences between algebras \cite{H}.
Therefore, we naturally ask to obtain many tilting modules.

One of approaches to get tilting modules is to use tilting mutation, which is an operation to construct a new tilting module from a given one by replacing a direct summand.
An original prototype of tilting mutation is in the notion of APR/BB tilting modules \cite{APR, BB}, which were formulated as tilting mutation for tilting modules by Riedtmann and Schofield \cite{RS}.
They also introduced the notion of tilting quivers to investigate the behavior of tilting modules by tilting mutation.
Interestingly enough, the set of tilting modules has poset structure, say the tilting poset, and its Hasse quiver coincides with its tilting quiver \cite{HU1, HU2}.
Here, we should remark that tilting mutation for tilting modules is not necessarily possible: Whether or not we can get a tilting module by tilting mutation depends on the choice of a direct summand of a given tilting module.

To provide this disadvantage, a generalization of tilting modules, so-called \emph{support $\tau$-tilting modules}, was introduced by Adachi, Iyama and Reiten \cite{AIR},
who moreover innovated \emph{support $\tau$-tilting mutation};
the quiver obtained by support $\tau$-tilting mutation is called the \emph{support $\tau$-tilting quiver}.
A point worthy of special mention is that support $\tau$-tilting mutation is always possible.
Furthermore, it was shown that the set of support $\tau$-tilting modules also has poset structure and the Hasse quiver coincides with its support $\tau$-tilting quiver;
We call the poset the \emph{support $\tau$-tilting poset}.

It is trivial that the structure of the tilting and the support $\tau$-tilting posets is determined by a given algebra.
So, we have an interesting question:

\begin{question*}
Can we reconstruct an algebra from the structure of the (support $\tau$-) tilting poset?
\end{question*}
We will focus on support $\tau$-tilting posets, but not support $\tau$-tilting quivers:
It would be very difficult to reconstitute an alebra by the support $\tau$-tilting quiver.
Moreover, it is still a open problem to give two algebras which have the same support $\tau$-tilting quiver and the different support $\tau$-tilting poset.

Happel and Unger gave a fascinating answer to this question,
that is, any hereditary algebra without multiple arrows in the (Gabriel) quiver is uniquely determined (up to isomorphism) by the tilting poset \cite{HU3}.

In this paper, we consider algebras admitting the same support $\tau$-tilting poset with a tree quiver algebra.
Now, we state a main theorem of this paper.

\begin{maintheorem*}[Corollary \ref{fc}]
Let $\Lambda:=kQ/I$ be a finite dimensional algebra over an algebraically closed field $k$ presented by a quiver $Q$ and an admissible ideal $I$ of $kQ$.
Then $\Lambda$ has the same support $\tau$-tilting poset with a tree quiver algebra if and only if 
\begin{itemize}
\item The quiver $Q^\circ$ obtained by removing all loops from $Q$ is a tree;
\item $e_i\Lambda\alpha=\alpha\Lambda e_j$ for any arrow $\alpha:i\to j\ (i\neq j)$ of $Q$, where $e_i$ stands for the primitive idempotent of $\Lambda$ corresponding to a vertex $i$ of $Q$;
\item Any path of $Q^\circ$ does not belong to $I$.
\end{itemize}
\end{maintheorem*}


\subsection*{Notation} Throughout this paper, let $\Lambda=kQ/I$ be a basic finite dimensional algebra over an algebraically
closed field $k$, where $Q$ is a finite quiver and $I$ is an admissible ideal of $kQ$.

\begin{enumerate}[(1)]
\item We denote by $Q_0$ and $Q_1$ the sets of vertices and arrows of $Q$, respectively.
For arrows $\alpha:a_0\to a_1$ and $\beta: b_0\to b_1$ of $Q$,
we mean by $\alpha\beta$ the path $a_0\xrightarrow{\alpha}a_1\xrightarrow{\beta}b_1$ if $a_1=b_0$, otherwise 0 in $kQ$.

\item We denote by $\mod \Lambda\ (\proj\Lambda)$ the category of finitely generated (projective) right $\Lambda$-modules.

\item By a module, we always mean a finitely generated right module.

\item Let $(\P, \leq)$ be a poset.
The interval of two elements $a,b$ of $\P$ with $a\leq b$ is denoted by $[a,b]:=\{x\in \P\ |\ a\leq x\leq b\}$.
We write the Hasse quiver of $\P$ by $\H(\P)$.
Let $\P'$ be a subset of $\P$.
When $\P'$ is a poset itself with a partial order $\underset{\P'}{\leq}$, we call it a \emph{subposet} of $\P$ if $a\underset{\P'}{\leq} b$ implies $a\leq b$ for any elements $a, b$ of $\P'$.
A subposet $\P'$ of $\P$ is said to be \emph{full} provided the converse of the implication above holds:
i.e. the partial orders of $\P$ and $\P'$ coincide.
Moreover, we say that a subposet $\P'$ of $\P$ is \emph{induced} if it is full and the partial order $\leq$ induces that $\H(\P')$ is a full subquiver of $\H(\P)$.
%
%
\end{enumerate} 
\section{Preliminary}

This section is devoted to recalling the definitions and their basic properties of 
support $\tau$-tilting modules and silting complexes.


\subsection{Support $\tau$-tilting modules}
\label{subsec:2.2}
For a module $M$, we denote by $|M|$ the number of non-isomorphic indecomposable direct summands of $M$.
The Auslander-Reiten translation is denoted by $\tau$. 

Let us recall the definition of support $\tau$-tilting modules.

\begin{definition}
Let $M$ be a $\Lambda$-module and $P$ a projective $\Lambda$-module.
\begin{enumerate}
\item We say that $M$ is $\tau$-\emph{rigid} provided it satisfies
$\Hom_{\Lambda}(M, \tau M)=0$.
\item A pair $(M,P)$ is also said to be  $\tau$-\emph{rigid} if $M$ is $\tau$-rigid and $\Hom_\Lambda(P,M)=0$.
\item 
A support $\tau$-tilting pair $(M,P)$ is defined to be a $\tau$-rigid pair with $|M|+|P|=|\Lambda|$. 
\end{enumerate}

By \cite[Proposition 2.3]{AIR}, a support $\tau$-tilting pair $(M,P)$ is uniquely determined by the module $M$.
Thus, we often use only $M$ instead of $(M,P)$, and call it a \emph{support $\tau$-tilting module}.

The set of isomorphism classes of basic support $\tau$-tilting modules of $\Lambda$ is denoted by $\sttilt\Lambda$.
\end{definition}

Dually, we define support $\tau^-$-tilting modules and their set $\stitilt\Lambda$.
Then, one can easily check that $M$ belongs to $\sttilt\Lambda$
if and only if $DM$ is in $\stitilt\Lambda^{\mathrm{op}}$,
where $D$ and $\Lambda^{\mathrm{op}}$ stand for the $k$-dual
and the opposite algebra of $\Lambda$.
(See \cite{AIR} for the details.)

We observe important properties.

\begin{proposition}\cite[Proposition 1.3, Lemma 2.1]{AIR}
\label{basicfact} The following hold.
 \begin{enumerate}[(1)]
\item A $\tau$-rigid pair $(M,P)$ satisfies the inequality $|M|+|P|\leq |\Lambda|$. 
\item Let $J$ be an ideal of $\Lambda$. Let $M$ and $N$ be  $(\Lambda/J)$-modules .
If $\Hom_{\Lambda}(M,\tau N)=0$, then $\Hom_{\Lambda/J}(M,\tau_{\Lambda/J} N)=0$.
Moreover, if $J=(e)$ is an two-sided ideal generated by an idempotent $e$, then the converse holds.

\end{enumerate}
\end{proposition}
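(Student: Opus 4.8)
The two assertions are essentially independent, so I would treat them separately; but both rest on one functorial description of $\Hom_\Lambda(M,\tau N)$ that I would set up first. Fixing a minimal projective presentation $Q_1\xrightarrow{g}Q_0\to N\to 0$, we have $\tau N=D\Tr N$ with $\Tr N=\operatorname{coker}(g^{*}\colon Q_0^{*}\to Q_1^{*})$, where $(-)^{*}=\Hom_\Lambda(-,\Lambda)$. Combining the adjunction $\Hom_\Lambda(M,DX)\cong D(M\otimes_\Lambda X)$ with the canonical isomorphism $M\otimes_\Lambda Q^{*}\cong\Hom_\Lambda(Q,M)$ for projective $Q$, I would obtain
\[
\Hom_\Lambda(M,\tau N)\cong D\operatorname{coker}\bigl(\Hom_\Lambda(Q_0,M)\xrightarrow{\,g^{*}\,}\Hom_\Lambda(Q_1,M)\bigr),
\]
so that $\Hom_\Lambda(M,\tau N)=0$ if and only if $g^{*}$ is surjective. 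This criterion is the common engine for part (2).

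For part (2), write $\Gamma=\Lambda/J$ and let $M,N$ be $\Gamma$-modules. I would compare the minimal projective presentation $Q_1\xrightarrow{g}Q_0\to N\to0$ over $\Lambda$ with the minimal one $\bar Q_1\xrightarrow{\bar g}\bar Q_0\to N\to0$ over $\Gamma$. Since $MJ=0$, every $\Lambda$-map $Q_i\to M$ factors through $Q_i/Q_iJ$, giving $\Hom_\Lambda(Q_i,M)\cong\Hom_\Gamma(Q_i/Q_iJ,M)$; moreover $Q_0/Q_0J=\bar Q_0$, while the snake lemma applied to the two presentations yields a short exact sequence $0\to Q_0J\to\Omega_\Lambda N\to\Omega_\Gamma N\to0$, whence a decomposition $Q_1/Q_1J\cong\bar Q_1\oplus R$ of $\Gamma$-projectives under which $g^{*}$ takes the form $(\bar g^{*},r^{*})$. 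Surjectivity of $g^{*}$ then forces surjectivity of $\bar g^{*}$ (compose with the projection), and by the criterion this proves the forward implication $\Hom_\Lambda(M,\tau N)=0\Rightarrow\Hom_\Gamma(M,\tau_\Gamma N)=0$. For the converse I must reverse the implication, which requires the extra summand $R$ to vanish, and this is exactly where I would invoke $J=\langle e\rangle$: since $\Top N$ has no support at $e$ we get $Q_0e\subseteq\Rad Q_0$, hence $Q_0J=Q_0e\Lambda\subseteq\Rad Q_0$, and I would use this to show that $-\otimes_\Lambda\Gamma$ carries the minimal $\Lambda$-presentation of $N$ to a minimal $\Gamma$-presentation, i.e. $R=0$, so that $g^{*}=\bar g^{*}$ and the two vanishing conditions become equivalent.

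For part (1) I would reinterpret the pair homologically. To $(M,P)$ I attach the two-term complex $T:=(Q_1\xrightarrow{g}Q_0)\oplus(P\to 0)$ in $\Kb(\proj\Lambda)$, where $Q_1\to Q_0$ is a minimal projective presentation of $M$. By the same computation as above (and its analogue for the projective part, where the condition $\Hom_\Lambda(P,M)=0$ enters through the term $\Hom_{\Kb(\proj\Lambda)}(P[1],T[1])$), I would verify that $\tau$-rigidity of $(M,P)$ is equivalent to $T$ being presilting, i.e. $\Hom_{\Kb(\proj\Lambda)}(T,T[1])=0$. The inequality $|M|+|P|=|T|\le|\Lambda|$ would then follow once one knows that the classes in $K_0(\Kb(\proj\Lambda))\cong\Z^{|\Lambda|}$ of the indecomposable summands of a presilting complex are linearly independent, since their number cannot exceed the rank $|\Lambda|$.

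The main obstacle is split between the two parts. In part (1) it is precisely this linear independence, equivalently the independence of the $g$-vectors $[Q_0^{i}]-[Q_1^{i}]$ and $-[P_j]$: I would establish it either by completing the presilting complex $T$ to a silting complex, whose $|\Lambda|$ summands form a $\Z$-basis of $K_0(\Kb(\proj\Lambda))$, or by a direct argument showing that a nontrivial relation among these classes contradicts $\Hom(T,T[1])=0$. In part (2) the delicate step is the converse, namely the vanishing $R=0$ under $J=\langle e\rangle$; checking that reduction modulo an idempotent-generated ideal preserves minimality of projective presentations—whereas it may genuinely fail for a general $J$, which is why only one implication survives there—is the crux.
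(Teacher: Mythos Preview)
The paper does not give its own proof of this proposition; it is quoted verbatim from \cite[Proposition~1.3, Lemma~2.1]{AIR} and used as a black box. So there is no in-paper argument to compare against.

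That said, your sketch is essentially correct and, for part~(2), matches the approach in \cite{AIR}: the criterion ``$\Hom_\Lambda(M,\tau N)=0$ iff $g^{*}\colon\Hom_\Lambda(Q_0,M)\to\Hom_\Lambda(Q_1,M)$ is surjective'' is exactly the engine used there, and comparing the $\Lambda$- and $\Lambda/J$-presentations is the right move. One small point to tighten: after establishing $Q_0/Q_0J=\bar Q_0$, the splitting $Q_1/Q_1J\cong\bar Q_1\oplus R$ with $\tilde g|_R=0$ requires a brief argument (adjust the splitting by lifting $R\to\Omega_\Gamma N$ through the projective cover $\bar Q_1\to\Omega_\Gamma N$); once that is in place, surjectivity of $\tilde g^{*}$ is equivalent to surjectivity of $\bar g^{*}$ together with $\Hom_\Gamma(R,M)=0$, and your identification of the converse with $R=0$ for $J=(e)$ is correct (the key being $Q_0J=(\Omega_\Lambda N)J$, which uses $J=J^2$).

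For part~(1) your route is valid but heavier than the one in \cite{AIR}. There the bound is obtained module-theoretically: one shows $\Ext^1_\Lambda(M,\fac M)=0$ for $\tau$-rigid $M$ and then invokes the Auslander--Smal{\o} inequality $|M|\le|\Lambda|$ for such modules; the pair version follows by passing to $\Lambda/(e)$ with $P=e\Lambda$. Your approach via presilting complexes and linear independence of $g$-vectors works, but note that it leans on results (completion to silting, or Theorem~\ref{gvector}) that in \cite{AIR} are developed \emph{after} Proposition~1.3, so you should make sure no circularity creeps in if you want a self-contained treatment.
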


We denote by $e_i$ the corresponding primitive idempotent to a vertex $i$ of $Q$.

Let $M$ be a module.
We define a subset of $Q_0$ by
\[\supp(M):=\{i\in Q_0\ |\ Me_i\neq0 \}.\]
If $(M, e\Lambda)$ is a support $\tau$-tilting pair for some idempotent $e$, 
then $\supp(M)$ coincides with the set of vertices $i$ satisfying $ee_i=0$.

Denote by $\fac M$ the full subcategory of $\mod\Lambda$ consisting of factor modules of finite direct sums of copies of $M$.

We introduce a partial order on $\sttilt\Lambda$.

\begin{definitiontheorem}\cite[Lemma 2.25]{AIR}
For support $\tau$-tilting modules $M$ and $M'$, we write $M\geq M' $ if $\fac M\supseteq \fac M'$.
 Then one has the following equivalent conditions:
\begin{enumerate}
\item $M\geq M'$;
\item  $\Hom_{\Lambda}(M',\tau M)=0$ and $\supp(M)\supseteq \supp(M')$.
\end{enumerate}
Moreover, $\geq$ gives a partial order on $\sttilt\Lambda$.

The Hasse quiver of the poset $\sttilt\Lambda$ is denoted by $\H(\sttilt\Lambda)$.
\end{definitiontheorem} 

Let $(N,R)$ be a pair of a module $N$ and a projective module $R$.

We say that $(N,R)$ is \emph{basic} if so are $N$ and $R$.
A direct summand $(N', R')$ of $(N,R)$ is also a pair of a module $N'$ and a projective module $R'$ which are direct summands of $N$ and $R$, respectively.

A pair $(N,R)$ is said to be \emph{almost complete support $\tau$-tilting} provided it is a $\tau$-rigid pair with $|N|+|R|=|\Lambda|-1$.

We make an observation of $\H(\sttilt\Lambda)$.

\begin{theorem}\label{basicprop}
\begin{enumerate}[(1)]
\item \cite[Theorem 2.18]{AIR} 
Every basic almost complete support $\tau$-tilting pair is a direct summand of exactly two basic support $\tau$-tilting pairs.

\item \cite[Corollary 2.34]{AIR} Let $(M,P)$ and $(M',P')$ be basic support $\tau$-tilting pairs.
Then $M$ and $M'$ are connected by an arrow of $\H(\sttilt\Lambda)$ if and only if $(M,P)$ and $(M',P')$ have a common basic almost complete support $\tau$-tilting pair as a direct summand.
In particular, $\H(\sttilt\Lambda)$ is $|\Lambda|$-regular. 

\item \cite[Corollary 2.38]{AIR} If $\H(\sttilt\Lambda)$ has a finite connected component $\mathcal{C}$, then
$\mathcal{C}=\H(\sttilt\Lambda)$. 
\end{enumerate}   
\end{theorem}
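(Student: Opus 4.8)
The three assertions are the combinatorial core of support $\tau$-tilting theory, and I would organize the proof so that (1) is the substantive statement from which (2) and (3) follow by bookkeeping. The technical backbone throughout is the order-preserving bijection $M\mapsto\fac M$ between $\sttilt\Lambda$ and the functorially finite torsion classes of $\mod\Lambda$: a support $\tau$-tilting module $M$ has $\fac M$ functorially finite, and conversely a functorially finite torsion class $\mathcal T$ is recovered as $\fac M$, where $M$ is the direct sum of the (finitely many) indecomposable $\Ext$-projective objects of $\mathcal T$. I would first set up this correspondence, together with the characterization of the order from the Definition-Theorem (via $\Hom_\Lambda(M',\tau M)=0$ and $\supp$), since it converts summand-exchange questions into questions about adjacent torsion classes.

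For (1), fix a basic almost complete support $\tau$-tilting pair $(U,Q)$, so $|U|+|Q|=|\Lambda|-1$. The plan has two halves. First, existence: I would produce a canonical completion by a Bongartz-type construction, forming the maximal $\tau$-rigid module containing $U$ via a universal extension; this yields a completion $(M^{+},P^{+})$ whose torsion class $\fac M^{+}$ is maximal among all completions, showing in particular that the set of completions is nonempty with a top element. Second, and this is where the real work lies, the count of exactly two: every completion $M\supseteq U$ adds a single indecomposable summand, and I would show that the admissible new summands are governed by a single ``gap''. Concretely, the torsion classes $\fac M$ of the completions fill an interval with top $\fac M^{+}$, the almost-complete hypothesis leaves exactly one degree of freedom, and analyzing the minimal $\Hom$/$\Ext$ data of $(U,Q)$ pins the possible new indecomposable summands to exactly two, say $X$ and $X'$, linked by an exchange sequence. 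The upper bound --- that there is no third completion --- is the delicate point and the main obstacle of the whole theorem: it amounts to showing that the torsion-class interval between the two completions contains a unique brick, so that no further functorially finite torsion class can be inserted.

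Granting (1), I would treat (2) as follows. Define the mutation of a support $\tau$-tilting pair at an indecomposable summand by deleting that summand and invoking (1) to take the other completion of the resulting almost complete pair. If $(M,P)$ and $(M',P')$ share a common almost complete summand $(U,Q)$, then by (1) they are its two completions, and using the order characterization one checks that $\fac M$ and $\fac M'$ are comparable with nothing strictly between them, i.e.\ they form a Hasse arrow. Conversely, if $M$ covers $M'$ in $\sttilt\Lambda$, I would show the covering forces the two pairs to agree in $|\Lambda|-1$ indecomposable summands, so they share a common almost complete pair. The $|\Lambda|$-regularity is then immediate: a support $\tau$-tilting pair $(M,P)$ has exactly $|\Lambda|$ indecomposable summands, deleting each produces an almost complete pair with (by (1)) precisely two completions, one of which is $(M,P)$ itself, and distinct deleted summands yield distinct neighbours (distinguished via their differing summands or supports); hence every vertex of $\H(\sttilt\Lambda)$ has exactly $|\Lambda|$ neighbours.

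Finally, for (3) I would use the extremal elements. The pair $(\Lambda,0)$ with $\fac\Lambda=\mod\Lambda$ is the maximum of $\sttilt\Lambda$ and $(0,\Lambda)$ is the minimum; by the mutation theory of (1)--(2), every non-maximal support $\tau$-tilting module admits an upward (left) mutation and every non-minimal one a downward (right) mutation, so $\Lambda$ is the unique source and $0$ the unique sink of $\H(\sttilt\Lambda)$. Now let $\mathcal C$ be a finite connected component. Being finite it has a poset-maximal element, which --- as all its neighbours lie in $\mathcal C$ and are therefore below it --- is a source, hence equals $\Lambda$; dually $0\in\mathcal C$. To conclude $\mathcal C=\H(\sttilt\Lambda)$, I would show every support $\tau$-tilting module $N$ connects to $\Lambda$: iterate upward mutation from $N$ to obtain a strictly increasing chain of covers terminating at the unique source $\Lambda$, whence $N$ lies in the component of $\Lambda$, namely $\mathcal C$. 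Guaranteeing that this ascending procedure terminates is the crux of part (3); here the finiteness of $\mathcal C$ must be leveraged to bound the chain and force it into $\mathcal C$, and this termination/reachability step is the main obstacle for (3), just as the exact count of completions is for (1).
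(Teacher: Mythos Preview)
The paper does not prove Theorem~\ref{basicprop}: each part is stated with a direct citation to \cite{AIR} and is used thereafter as a black box, so there is no in-paper argument to compare against. Your outline for (1) and (2) is a reasonable sketch of the Adachi--Iyama--Reiten approach (Bongartz completion for existence, the exchange analysis for the count of two, and the translation between mutation and Hasse covers).

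For (3), however, your plan has a directional gap. You propose to iterate \emph{upward} mutation starting from an arbitrary $N\in\sttilt\Lambda$ and then invoke the finiteness of $\mathcal C$ to force termination at $\Lambda$. But the terms of this ascending chain are not known to lie in $\mathcal C$ --- that is precisely what you are trying to prove --- so the finiteness of $\mathcal C$ gives no bound whatsoever on its length, and there is no mechanism to ``force it into $\mathcal C$'' as you say. The argument that actually works runs the other way: having shown $\Lambda\in\mathcal C$, one uses the covering property (if $T>U$ then some right mutation $T'$ of $T$ satisfies $T>T'\geq U$) to build a strictly decreasing chain $\Lambda=T_0>T_1>\cdots$ with every $T_i$ a neighbour of $T_{i-1}$, hence in $\mathcal C$, and every $T_i\geq N$. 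Now the finiteness of $\mathcal C$ genuinely forces termination, and the chain can only stop at $N$, so $N\in\mathcal C$. The asymmetry is essential: the descending chain begins inside $\mathcal C$ and stays there step by step, whereas your ascending chain begins outside $\mathcal C$ and has no reason ever to enter it.
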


%

For a basic $\tau$-rigid pair $(N,R)$, we define
\[\sttilt_R^N\Lambda:=\{M\in \sttilt\Lambda\ |\  N\in \add M,\Hom_{\Lambda}(R, M)=0\},\]
equivalently, which consists of all support $\tau$-tilting pairs having $(N,R)$ as a direct summand. 
For simplicity, we omit 0 if $N=0$ or $R=0$.

Given an idempotent $e=e_{i_1}+\cdots+e_{i_\ell}$ of $\Lambda$ so that $R=e\Lambda$,
we see that $M$ belongs to $\sttilt_R\Lambda$ if and only if it is a basic support $\tau$-tilting module
with $\supp(M)=Q_0\setminus\{i_1,\dots,i_\ell\}$.
Hence, by Proposition \ref{basicfact} this leads to a poset isomorphism $\sttilt_R\Lambda\simeq \sttilt\Lambda/(e)$.


In the rest of this subsection, we study the following question,
which is also of interest in this paper.

\begin{question}\label{questionlattice}
When does the poset $\sttilt\Lambda$ have lattice structure?
\end{question}

Here, a poset $\mathbb{P}$ is defined to be a \emph{lattice} if for any elements $a,b$ of $\mathbb{P}$, there is a maximum element $a\wedge b$ of $\{x\in \mathbb{P}\mid x\leq a,b \}$ and a minimum element $a\vee b$ of $\{x\in \mathbb{P}\mid x\geq a,b \}$.

Let $\Lambda$ be a hereditary algebra.
In this case, note that a (support) $\tau$-tilting module is nothing but a (support) tilting module. (See \cite{IT} for the definition of support tilting modules.) 

The following result gives us an answer to the question above.

\begin{theorem}\cite[Theorem 2.3]{IRTT}\label{IRTT} Assume that $\Lambda$ is hereditary.
Then the poset $\sttilt \Lambda$ has lattice structure if and only if $\Lambda$ is representation-finite or $|\Lambda|\leq 2$.
\end{theorem}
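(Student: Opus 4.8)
The plan is to establish the two implications separately: sufficiency by an explicit description of $\sttilt\Lambda$, and necessity by producing two support $\tau$-tilting modules with no least upper bound.

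\emph{Sufficiency.} Suppose first that $\Lambda$ is representation-finite, hence $\tau$-tilting finite. Then by \cite{AIR} the assignment $M\mapsto\fac M$ is a bijection from $\sttilt\Lambda$ onto the set of all subcategories of $\mod\Lambda$ closed under factor modules and extensions; since that set is closed under arbitrary intersections and each of its subsets has a smallest member containing it, $\fac M\cap\fac M'$ and the dual construction furnish the meet and join of $M,M'$, so $\sttilt\Lambda$ is a (complete) lattice. If $|\Lambda|\le1$ then $\sttilt\Lambda$ is a two-element chain. If $|\Lambda|=2$ and $\Lambda$ is representation-infinite, then $\Lambda$ is a generalized Kronecker algebra, and I would describe $\sttilt\Lambda$ by hand: every tilting module is preprojective or preinjective (a regular module has self-extensions, hence is never $\tau$-rigid), the preprojective tilting modules form a descending chain below $\Lambda$ and the preinjective ones a descending chain lying below all of them, and the only remaining support $\tau$-tilting modules are the finitely many supported on a single vertex, together with $0$. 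Since $\H(\sttilt\Lambda)$ is $2$-regular by Theorem \ref{basicprop}, it is then a finite check that any two of these elements have a meet and a join, so $\sttilt\Lambda$ is a lattice.

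\emph{Necessity.} Suppose $\Lambda$ is hereditary, representation-infinite, with $|\Lambda|\ge3$. It is convenient (though with a mild caveat about infinite posets, as one should then treat $\sttilt\Lambda$ as a complete lattice) to first observe that lattice structure passes to $\Lambda/(e)$ for every idempotent $e$: under the isomorphism $\sttilt(\Lambda/(e))\simeq\sttilt_{e\Lambda}\Lambda$ from the excerpt, the right-hand side is $\{M\in\sttilt\Lambda : \supp M\text{ avoids the support of }e\}$, which is closed under the meet of $\sttilt\Lambda$ --- because $N\le M$ forces $\supp N\subseteq\supp M$ --- and has both a greatest and a least element, so it inherits lattice structure. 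Passing to full subquivers, it then suffices to treat the two kinds of ``smallest'' remaining cases: (a) a tame hereditary algebra of rank $\ge3$ (extended Dynkin type $\widetilde{A}_n\ (n\ge2)$, $\widetilde{D}_n$, $\widetilde{E}_n$), and (b) a rank-$3$ (necessarily wild) algebra obtained by attaching one vertex to a generalized Kronecker quiver. The key point is that, precisely because $|\Lambda|\ge3$ and $\Lambda$ is representation-infinite, $\Lambda$ admits a regular exceptional module --- indeed two ``incompatible'' such bricks $R_1,R_2$, meaning $R_1$ and $R_2$ are each $\tau$-rigid but $R_1\oplus R_2$ is not; in case (a) one may take $R_1,R_2$ to be two quasi-simple modules of a tube of rank $\ge2$, for which $\tau R_1\cong R_2$ forces $\Hom(R_1,\tau R_2)\ne0$. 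I would then use $R_1,R_2$ to build a strictly descending chain $T_0>T_1>T_2>\cdots$ of tilting modules together with support $\tau$-tilting modules $M_1\ni R_1$ and $M_2\ni R_2$ such that $M_1,M_2\le T_n$ for all $n$, while \emph{no} support $\tau$-tilting module simultaneously dominates $M_1$ and $M_2$ and stays below every $T_n$. Since the $T_n$ form an infinite descending chain, the set of common upper bounds of $\{M_1,M_2\}$ then has no least element, so $M_1\vee M_2$ does not exist and $\sttilt\Lambda$ is not a lattice. The contrast with $|\Lambda|=2$ is exactly the absence there of a regular exceptional module: the analogous ``limit'' torsion class $\add(\text{regular}\cup\text{preinjective})$, though not functorially finite, has a \emph{unique} maximal functorially-finite subcategory, namely $\fac(D\Lambda)$.

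\emph{Main obstacle.} The work is concentrated in this last construction --- choosing the chain $(T_n)$ and the modules $M_1,M_2$, and proving that nothing functorially finite dominates both $M_1$ and $M_2$ while remaining inside $\bigcap_n\fac T_n$ --- carried out uniformly over the tame hereditary algebras of rank $\ge3$ and over the rank-$3$ Kronecker-plus-a-vertex algebras. This demands fairly precise control of $\Hom$ and $\Ext^1$ among the regular modules of a tube (or of a regular component, in the wild case) and between regular and pre(in)jective modules. The sufficiency direction and the reduction step are comparatively routine, given \cite{AIR} and the $|\Lambda|$-regularity of $\H(\sttilt\Lambda)$.
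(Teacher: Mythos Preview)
The paper does not prove this statement at all: it is quoted directly from \cite[Theorem 2.3]{IRTT} and used only as a black box (in the closing corollary on lattice structure). There is therefore no ``paper's own proof'' to compare your attempt against.

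That said, two remarks on your sketch. First, the reduction step in the necessity direction has a genuine gap that you yourself flag as a ``mild caveat'': for an infinite poset, the fact that $\sttilt_{e\Lambda}\Lambda$ is closed under the meets of $\sttilt\Lambda$ and has a greatest element does \emph{not} imply that it has binary joins; that implication requires completeness, which is precisely what is in question. So the passage from $\sttilt\Lambda$ being a lattice to $\sttilt(\Lambda/(e))$ being a lattice is not justified as written. Second, in the construction at the end you assert that no support $\tau$-tilting module dominates both $M_1$ and $M_2$ while lying below every $T_n$, but you give no argument; this is indeed the entire content of the necessity direction, and your outline does not yet supply it. The idea of using two incompatible regular bricks (quasi-simples in a non-homogeneous tube, in the tame case) is the right one and is close in spirit to how \cite{IRTT} proceeds via torsion classes that fail to be functorially finite, but the verification is substantial and case-dependent.
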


\subsection{Silting complexes}

We denote by $\Kb(\proj\Lambda)$ the bounded homotopy category of $\proj\Lambda$.

A complex $T=[\cdots\rightarrow T^i\rightarrow T^{i+1}\rightarrow\cdots]$ in $\Kb(\proj\Lambda)$ is said to be \emph{two-term} provided $T^i=0$ unless $i=0, -1$.

We recall the definition of silting complexes.

\begin{definition} Let $T$ be a complex in $\Kb(\proj \Lambda)$.
\begin{enumerate}
\item We say that $T$ is \emph{presilting} if $\Hom_{\Kb(\proj \Lambda)}(T,T[i])=0$ for any positive integer $i$.
\item A \emph{silting complex} is defined to be presilting and generate $\Kb(\proj \Lambda)$ by taking direct summands, mapping cones and shifts.   
\end{enumerate}
We denote by $\silt\Lambda\ (\tsilt \Lambda)$ the set of isomorphism classes of basic (two-term) silting complexes in $\Kb(\proj \Lambda)$.
\end{definition}

We give an easy property of (pre)silting complexes. 

\begin{lemma}\label{factformpp}\cite[Lemma 2.25]{AI}
Let $T=[0\to T^{-1}\to T^0\to 0]$ be a two-term presilting complex of $\Kb(\proj\Lambda)$. Then we have $\add T^{-1}\cap \add T^0=\{0\}.$

\end{lemma}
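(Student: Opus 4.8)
The plan is to argue by contradiction, turning a hypothetical common direct summand into a nonzero morphism $T\to T[1]$. So suppose $\add T^{-1}\cap\add T^0\neq\{0\}$. By Krull--Schmidt there is then a nonzero indecomposable projective $P$ which is a direct summand of both terms, say $T^{-1}=P\oplus X$ and $T^0=P\oplus Y$; write $p\colon T^{-1}\to P$, $q\colon T^0\to P$ for the projections and $i\colon P\to T^{-1}$, $j\colon P\to T^0$ for the inclusions, so that $pi=\mathrm{id}_P$ and $qj=\mathrm{id}_P$. I want to feed the ``identity of $P$'' into $T$ and contradict $\Hom_{\Kb(\proj\Lambda)}(T,T[1])=0$.

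First a bookkeeping step about two-term complexes. For $T=[T^{-1}\xrightarrow{\,d\,}T^0]$ concentrated in degrees $-1,0$, any $\Lambda$-linear map $g\colon T^{-1}\to T^0$ is the degree $-1$ component of a chain map $\varphi_g\colon T\to T[1]$ (the degree $0$ component is forced to be $0$, and the two commutativity squares have a zero corner, so there is nothing to check). Chasing the homotopy formula shows that $\varphi_g$ is null-homotopic exactly when $g=s_0 d-d s_{-1}$ for some $s_0\in\End_\Lambda(T^0)$ and $s_{-1}\in\End_\Lambda(T^{-1})$. Hence the presilting condition $\Hom_{\Kb(\proj\Lambda)}(T,T[1])=0$ says precisely that every $g\colon T^{-1}\to T^0$ lies in $\End_\Lambda(T^0)\circ d+d\circ\End_\Lambda(T^{-1})$.

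Now apply this to $g:=jp\colon T^{-1}\twoheadrightarrow P\hookrightarrow T^0$: there are $s_0,s_{-1}$ with $jp=s_0 d-d s_{-1}$. Composing with $q$ on the left and $i$ on the right gives
\[
\mathrm{id}_P=q\,j\,p\,i=q s_0 (d i)-(q d) s_{-1} i .
\]
Here I use that $T$ is a minimal complex, i.e.\ that $d$ is a radical morphism of $\proj\Lambda$: then $di\colon P\to T^0$ and $qd\colon T^{-1}\to P$ both lie in the radical, and since the radical is an ideal of $\proj\Lambda$ the two terms on the right lie in $\Rad\End_\Lambda(P)$. Thus $\mathrm{id}_P\in\Rad\End_\Lambda(P)$, which is impossible because $\End_\Lambda(P)$ is local ($P$ being indecomposable projective). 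This contradiction yields $\add T^{-1}\cap\add T^0=\{0\}$.

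The only genuinely delicate point is the minimality normalization: the conclusion really does need $d$ to be a radical map (equivalently, $T$ to have no contractible direct summand), since otherwise $[P\xrightarrow{\ \mathrm{id}\ }P]$ is a presilting two-term complex violating the statement. This is the running convention for complexes in $\Kb(\proj\Lambda)$; once it is in force, the rest is just the elementary homotopy computation together with the ideal property of the radical and the locality of $\End_\Lambda(P)$.
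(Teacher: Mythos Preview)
The paper does not give its own proof of this lemma; it simply records the statement and cites \cite[Lemma~2.25]{AI}. So there is nothing to compare against on the paper's side.

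Your argument is correct and is essentially the standard one. The identification of $\Hom_{\Kb(\proj\Lambda)}(T,T[1])$ with $\Hom_\Lambda(T^{-1},T^0)$ modulo maps of the form $s_0d+ds_{-1}$ (the sign is immaterial, since $s_{-1}$ may be replaced by $-s_{-1}$) is right, and feeding in $g=jp$ and sandwiching with $q,\,i$ forces $\mathrm{id}_P$ into $\Rad\End_\Lambda(P)$ once $d$ is a radical map. Your observation about minimality is exactly on point: the assertion $\add T^{-1}\cap\add T^0=\{0\}$ is a statement about a chosen representative of the homotopy class, and it is false for non-minimal representatives (e.g.\ $[P\xrightarrow{\mathrm{id}}P]$). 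The paper does not spell out this convention, but over a finite dimensional algebra every bounded complex of projectives is homotopy equivalent to a unique minimal one, and the lemma is tacitly about that representative; indeed the paper immediately uses the lemma (after Theorem~\ref{bijection}) for minimal projective presentations, where the radical condition on $d$ is automatic.
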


The set $\silt\Lambda$ also has poset structure as follows.

\begin{definitiontheorem}\cite[Theorem 2.11]{AI}
For silting complexes $T$ and $T'$ of $\Kb(\proj\Lambda)$, we write $T\geq T'$
 if $\Hom_{\Kb(\proj\Lambda)}(T, T'[i])=0$ for every positive integer $i$.
Then the relation $\geq$ gives a partial order on $\silt\Lambda$.
\end{definitiontheorem}

The following result connects silting theory with $\tau$-tilting theory.

\begin{theorem}\label{bijection}
\cite[Corollary 3.9]{AIR}
The assignment 
\[(M,P)\mapsto \left\{ \begin{array}{c}\xymatrix{
P_1 \ar[r]^{p_M} \ar@{}@<3pt>[r]^(0){(\mathrm{-1th})}\ar@{}@<3pt>[r]^(1){(\mathrm{0th})} \ar@{}[rd]|{\bigoplus} & P_0 \\
P \ar[r] & 0
} \end{array}\right.\]
where $p_M$ is a minimal projective presentation of $M$, gives rise to a poset isomorphism 
$\sttilt\Lambda\xrightarrow{\sim}\tsilt\Lambda$.

\end{theorem}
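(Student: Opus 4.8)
The plan is to reduce every assertion---well-definedness, bijectivity, and compatibility of the two partial orders---to a single homotopy computation in $\Kb(\proj\Lambda)$. Write $P_M=[P_1\xrightarrow{p_M}P_0]$ (concentrated in degrees $-1,0$) for the complex attached to a minimal projective presentation of a module $M$, and for a projective $P$ write $P^{\bullet}=[P\to 0]$ with $P$ in degree $-1$, so that the complex assigned to a pair $(M,P)$ is $T_{(M,P)}:=P_M\oplus P^{\bullet}$. The computational heart is the identification, for modules $M,N$,
\[\Hom_{\Kb(\proj\Lambda)}(P_M,P_N[1])\cong D\sHom_\Lambda(N,\tau M),\]
which I would obtain by writing a degree-$(-1)$ cochain $f:P_1\to P_0^N$, dividing out the two homotopy contributions $h\circ p_M$ and $p_N\circ g$, and recognising the resulting quotient through the transpose $\Tr M$ (so that $\tau M=D\Tr M$) via the Auslander--Reiten formula; in particular $\Hom_{\Kb(\proj\Lambda)}(P_M,P_N[1])=0$ if and only if $\Hom_\Lambda(N,\tau M)=0$. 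A companion elementary calculation, using exactness of $\Hom_\Lambda(P,-)$ on $P_1\xrightarrow{p_M}P_0\to M\to 0$, yields $\Hom_{\Kb(\proj\Lambda)}(P^{\bullet},P_N[1])\cong\Hom_\Lambda(P,N)$, while the two remaining cross-terms $\Hom_{\Kb}(P_M,P^{\bullet}[1])$ and $\Hom_{\Kb}(P^{\bullet},P'^{\bullet}[1])$ vanish for degree reasons.

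Granting these, well-definedness follows at once: expanding $\Hom_{\Kb}(T_{(M,P)},T_{(M,P)}[1])$ into the four blocks above shows it is zero exactly when $\Hom_\Lambda(M,\tau M)=0$ and $\Hom_\Lambda(P,M)=0$, i.e.\ exactly when $(M,P)$ is a $\tau$-rigid pair; since $T_{(M,P)}$ is two-term, this is already the full presilting condition. To upgrade presilting to silting I would invoke the counting principle of silting theory, that a two-term presilting complex with $|\Lambda|$ indecomposable summands is silting. Here $T_{(M,P)}$ has $|M|+|P|=|\Lambda|$ indecomposable summands, the absence of cancellation between $P_M$ and $P^{\bullet}$ being guaranteed by $\add T^{-1}\cap\add T^0=\{0\}$ (Lemma \ref{factformpp}).

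For bijectivity I would construct the inverse explicitly. Given a two-term silting complex $T=[T^{-1}\to T^0]$, set $M:=H^0(T)=\operatorname{Coker}(T^{-1}\to T^0)$ and let $P$ be the maximal projective with $P^{\bullet}$ a direct summand of $T$. Krull--Schmidt in $\Kb(\proj\Lambda)$ together with Lemma \ref{factformpp} forces $T\cong P_M\oplus P^{\bullet}$ with $P_M$ a \emph{minimal} presentation of $M$: the hypothesis $\add T^{-1}\cap\add T^0=\{0\}$ rules out contractible summands $[P'\xrightarrow{\sim}P']$, and splitting off $P^{\bullet}$ removes the summands of $T^{-1}$ killed by the differential. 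Thus $(M,P)\mapsto T_{(M,P)}$ and $T\mapsto(H^0(T),P)$ are mutually inverse, and the reverse reading of the block computation shows $(M,P)$ is support $\tau$-tilting whenever $T$ is silting, so the two maps correctly match $\sttilt\Lambda$ with $\tsilt\Lambda$.

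Finally, order-compatibility is the same block computation applied to two distinct pairs. One gets $T_{(M,P)}\ge T_{(M',P')}$, i.e.\ $\Hom_{\Kb}(T_{(M,P)},T_{(M',P')}[1])=0$, precisely when $\Hom_\Lambda(M',\tau M)=0$ and $\Hom_\Lambda(P,M')=0$; since $P=e\Lambda$ with $\Hom_\Lambda(e\Lambda,M')\cong M'e$, the second condition reads $\supp(M')\subseteq\supp(M)$, and by the Definition--Theorem defining $\ge$ on $\sttilt\Lambda$ this pair of conditions is exactly $M\ge M'$. Hence both the bijection and its inverse are order-preserving, giving the asserted poset isomorphism. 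The main obstacle is the key isomorphism $\Hom_{\Kb}(P_M,P_N[1])\cong D\sHom_\Lambda(N,\tau M)$: unwinding the homotopy relations is routine, but correctly threading the \emph{minimal} presentation through the transpose and the Auslander--Reiten formula---and verifying that stable and ordinary $\Hom$ into $\tau M$ have the same vanishing behaviour---is the delicate point on which everything else rests.
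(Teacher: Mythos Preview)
The paper does not supply its own proof of this statement; Theorem~\ref{bijection} is quoted from \cite[Corollary~3.9]{AIR} and used thereafter as a black box, so there is nothing in the paper to compare your argument against.

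Your sketch is correct and is essentially the argument of \cite{AIR}: the homotopy computation giving $\Hom_{\Kb(\proj\Lambda)}(P_M,P_N[1])=0\Leftrightarrow\Hom_\Lambda(N,\tau M)=0$ is their Lemma~3.4, the block decomposition yields the bijection between basic $\tau$-rigid pairs and basic two-term presilting complexes (their Theorem~3.2), and the upgrade from presilting to silting via the summand count $|M|+|P|=|\Lambda|$ is their Proposition~3.3(b)/Corollary~3.8. One small slip: when you argue that $T_{(M,P)}$ has exactly $|M|+|P|$ indecomposable summands, the reason no indecomposable summand of $P_M$ can be a stalk $[P_j\to 0]$ is the \emph{minimality} of $p_M$ (every indecomposable summand of $P_M$ has nonzero $0$th term), not Lemma~\ref{factformpp}; that lemma is what you correctly invoke in the inverse direction to exclude contractible summands of an arbitrary two-term silting complex.
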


By this theorem, we will feel free to use silting complexes and support $\tau$-tilting modules.
Lemma \ref{factformpp} and Theorem \ref{bijection} say that every $\tau$-rigid module with a minimal projective presentation $P_1\to P_0$ satisfies $\add P_1\cap \add P_0=\{0\}$.

We will close this section by recalling the definition and an important property of g-vectors of complexes of $\Kb(\proj \Lambda)$.

Let $K_0(\proj\Lambda)$ be the Grothendieck group of $\proj\Lambda$ and $[P]$ denote the element in $K_0(\proj\Lambda)$ corresponding to a projective module $P$.
As is well-known, the set $\{[e_i\Lambda]\ |\ i\in Q_0 \}$ forms a basis of $K_0(\proj\Lambda)$.

\begin{definition}
Let $X=[P'\to P]$ be a two-term complex of $\Kb(\proj \Lambda)$ and write $[P]-[P']=\sum_{i\in Q_0}g_i^X [e_i \Lambda]$ in $K_0(\proj \Lambda)$ for some $g_i^X\in\Z$.
Then we call the vector $g^X:=(g_i^X)_{i\in Q_0}\in \Z^{Q_0}$ the \emph{g-vector} of $X$.
\end{definition}

\begin{theorem}
\label{gvector}
\cite[Theorem 5.5]{AIR}
The map $T\mapsto g^T$ gives an injection from the set of isomorphism classes of two-term presilting complexes
to $K_0(\proj \Lambda)$.
\end{theorem}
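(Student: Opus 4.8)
The plan is to reconstruct a two-term presilting complex $T$ from its g-vector $g^T$ in two stages: first recover the two terms of $T$, and then recover the differential up to isomorphism. For the first stage, write $T=[P^{-1}\xrightarrow{f}P^0]$ with $P^{-1}$ in degree $-1$ and $P^0$ in degree $0$, and let $a_i$ (resp. $b_i$) be the multiplicity of $e_i\Lambda$ as a direct summand of $P^0$ (resp. $P^{-1}$), so that $g^T=\sum_{i\in Q_0}(a_i-b_i)[e_i\Lambda]$. By Lemma \ref{factformpp} we have $\add P^{-1}\cap\add P^0=\{0\}$, so for each $i$ at most one of $a_i,b_i$ is nonzero; since $\{[e_i\Lambda]\}_{i\in Q_0}$ is a basis of $K_0(\proj\Lambda)$, this forces $a_i=\max(g_i^T,0)$ and $b_i=\max(-g_i^T,0)$. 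Hence $g^T$ determines $P^0$ and $P^{-1}$ up to isomorphism. Consequently, if $T=[P^{-1}\xrightarrow{f}P^0]$ and $U=[P^{-1}\xrightarrow{g}P^0]$ are two-term presilting complexes with $g^T=g^U$, then after this reduction they have the same terms, and it remains to prove $T\cong U$ in $\Kb(\proj\Lambda)$.

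For the second stage, I would translate the presilting condition into a concrete condition on the differential. Since $T$ is two-term, $\Hom_{\Kb(\proj\Lambda)}(T,T[i])$ vanishes automatically for $i\geq 2$, so $T$ is presilting precisely when $\Hom_{\Kb(\proj\Lambda)}(T,T[1])=0$. A direct computation of chain maps modulo homotopy identifies
\[\Hom_{\Kb(\proj\Lambda)}(T,T[1])\cong \operatorname{coker}\bigl(\mu_f\bigr),\qquad \mu_f\colon \End_\Lambda(P^0)\oplus\End_\Lambda(P^{-1})\to \Hom_\Lambda(P^{-1},P^0),\ (a,b)\mapsto af-fb.\]
Thus $T$ is presilting if and only if $\mu_f$ is surjective, and likewise $U$ is presilting if and only if $\mu_g$ is surjective.

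The third stage is a geometric argument exploiting that $k$ is algebraically closed. The algebraic group $G:=\operatorname{Aut}_\Lambda(P^0)\times\operatorname{Aut}_\Lambda(P^{-1})$ acts on the finite-dimensional affine space $V:=\Hom_\Lambda(P^{-1},P^0)$ by $(a,b)\cdot f=afb^{-1}$, and the differential at the identity of the orbit map $G\to V$, $(a,b)\mapsto afb^{-1}$, sends $(x,y)\mapsto xf-fy=\mu_f(x,y)$. Hence the tangent space to the orbit $G\cdot f$ at $f$ is exactly $\operatorname{im}(\mu_f)$, so surjectivity of $\mu_f$ says that $G\cdot f$ is open in $V$. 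Since $V$ is irreducible, two nonempty open subsets must meet, so $V$ contains at most one open orbit; as both $f$ and $g$ lie in open orbits, they lie in the same one, giving $a\in\operatorname{Aut}_\Lambda(P^0)$ and $b\in\operatorname{Aut}_\Lambda(P^{-1})$ with $af=gb$. The pair $(\,b,a\,)$ is then an isomorphism of complexes $T\xrightarrow{\sim}U$, which finishes the proof.

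The main obstacle is the second and third stages. The identification of $\Hom_{\Kb(\proj\Lambda)}(T,T[1])$ with $\operatorname{coker}(\mu_f)$ demands careful bookkeeping of chain homotopies, and the geometric step must be justified with some care: one needs that a $G$-orbit is a smooth locally closed subvariety of $V$ whose tangent space is the image of the differential of the orbit map, and that an irreducible variety admits at most one open orbit. By contrast, the first stage is essentially formal once Lemma \ref{factformpp} is in hand.
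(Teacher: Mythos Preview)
The paper does not give its own proof of this statement; it is simply quoted from \cite[Theorem~5.5]{AIR}. Your argument is correct and is in fact precisely the strategy used there (going back to the orbit argument of Derksen--Fei): recover the terms from the $g$-vector via Lemma~\ref{factformpp}, translate the presilting condition into surjectivity of $(a,b)\mapsto af-fb$, and conclude by the open-orbit argument over an algebraically closed field.

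One small point worth tightening: before invoking Lemma~\ref{factformpp} you should say that you choose a \emph{minimal} representative of $T$, i.e.\ one with $f\in\Rad\Hom_\Lambda(P^{-1},P^0)$. As literally written, the lemma's conclusion fails for non-minimal representatives (e.g.\ the contractible complex $[P\xrightarrow{\mathrm{id}}P]$ is presilting in $\Kb(\proj\Lambda)$ but has $\add P^{-1}\cap\add P^0=\add P$). Once you restrict to minimal representatives, the equalities $a_i=\max(g_i^T,0)$ and $b_i=\max(-g_i^T,0)$ follow as you say, and the rest of your argument goes through without change.
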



\section{Split-by-nilpotent extensions and two-term silting complexes}
Split-by-nilpotent extensions were first studied in \cite{AZ}.
We recall the definition of split-by-nilpotent extensions
and give a reduction theorem for 2-term silting complexes.

\begin{definition} Let $\Lambda$ and $A$ be two finite dimensional algebras. We call $\Lambda$ a {\bf split-by-nilpotent 
extension} of $A$ if there is a split surjective algebra homomorphism $\Lambda\to A$ whose kernel $L$
is nilpotent. 
\end{definition}

A path $a_0\to a_1\to\cdots\to a_n$ of $Q$ with $a_i\neq a_j$ for any pair $(i, j)\neq (0,n)$ ($i<j$) is said to be an \emph{$n$-cycle} provided $a_0=a_n$.
A 1-cycle is nothing but a loop.

Put $Q^\circ$ as the quiver obtained from $Q$ by 
removing all loops.
Note that if $Q$ has no $n$-cycle for $n>1$, then $Q^\circ$ is acyclic, whence $kQ^\circ$ is finite dimensional and hereditary.

We give an example of split-by-nilpotent extensions constructed  from quivers.

\begin{proposition}
\label{sbne} Let $Q$ be a finite quiver without $n$-cycles for $n>1$ and $L$ the two-sided ideal of $kQ$ generated by all loops.
Let $I$ be an admissible ideal of $kQ$ contained in $L$.
Then $\Lambda:=kQ/I$ is a split-by-nilpotent extension of $A:=kQ^\circ$.
\end{proposition}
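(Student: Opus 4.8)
The plan is to exhibit the split surjective algebra homomorphism explicitly and check that its kernel is nilpotent. First I would note that since $I\subseteq L$, the ideal $L/I$ of $\Lambda=kQ/I$ makes sense, and the composite $kQ\twoheadrightarrow kQ/L\cong kQ^\circ$ factors through $\Lambda$ to give an algebra surjection $\pi\colon\Lambda\to A=kQ^\circ$; here I use that $I\subseteq L$ precisely so that $I$ maps to $0$ under $kQ\to kQ/L$, and that $kQ/L\cong kQ^\circ$ because killing the arrows that are loops is the same as deleting them from the quiver. The kernel of $\pi$ is $L/I$.

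Next I would produce a splitting. The inclusion of quivers $Q^\circ\hookrightarrow Q$ (same vertices, arrows of $Q$ that are not loops) induces a $k$-algebra map $kQ^\circ\to kQ$, and composing with $kQ\twoheadrightarrow\Lambda$ gives $\iota\colon A\to\Lambda$; since no nonzero element of $I$ lies in the subalgebra generated by non-loop arrows — because $I\subseteq L$ and $L$ is generated by loops, so $L$ has zero intersection with the span of paths avoiding loops — the map $\iota$ is injective, and one checks $\pi\circ\iota=\mathrm{id}_A$ on vertices and non-loop arrows, hence on all of $A$. So $\pi$ is split surjective.

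Finally, $L/I$ is nilpotent: $L$ is contained in the arrow ideal (the Jacobson radical) of $kQ$, which is nilpotent modulo $I$ because $I$ is admissible, i.e. $(kQ_1)^N\subseteq I$ for some $N$; therefore $(L/I)^N=0$ in $\Lambda$. The hypothesis that $Q$ has no $n$-cycles for $n>1$ is what guarantees $Q^\circ$ is acyclic, so $A=kQ^\circ$ is a genuine finite-dimensional hereditary algebra as claimed in the preceding remark, though this acyclicity is not strictly needed for the split-by-nilpotent conclusion itself.

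I do not expect a serious obstacle here: the only point requiring a little care is verifying that $\iota$ is well-defined and injective, i.e. that passing to the quotient by $I$ does not collapse the copy of $A$ — this is exactly where $I\subseteq L$ is used, since $L\cap kQ^\circ=0$ inside $kQ$ when $Q^\circ$ is viewed as spanned by loop-free paths. Everything else is a direct unwinding of definitions.
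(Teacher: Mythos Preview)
Your proof is correct and follows essentially the same route as the paper: build the surjection $\Lambda\to A$ by factoring $kQ\to kQ/L\cong kQ^\circ$ through $\Lambda$ using $I\subseteq L$, split it via the inclusion $kQ^\circ\hookrightarrow kQ$, and note that the kernel $L/I$ is nilpotent because $L\subseteq\Rad kQ$ and $I$ is admissible. The separate argument for injectivity of $\iota$ is harmless but redundant once $\pi\circ\iota=\mathrm{id}_A$ is checked, and your remark that the no-cycle hypothesis is ``not strictly needed'' should be read with the caveat that it is needed to make $A=kQ^\circ$ finite dimensional, which the paper's definition of split-by-nilpotent extension requires.
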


\begin{proof}
Let $\iota:kQ^\circ\to kQ$ and $\pi:kQ\to kQ/L$ be the canonical algebra homomorphisms.
Since $Q^\circ$ is given by removing all loops, we see that the composition $\pi\circ \iota$ is an isomorphism.
Regarding $\pi$ as an algebra homomorphism to $A$,
it follows from $I\subseteq L$ that it gives rise to an algebra epimorphism $\widetilde{\pi}:\Lambda\to A$.

Put $\widetilde{\iota}:A\to \Lambda$ as the algebra homomorphism induced by $\iota$, and then we can easily check that $\widetilde{\pi}\circ \widetilde{\iota}=\mathrm{id}_{A}$. 
Thus, it is obtained that $\widetilde{\pi}$ is a split epimorphism with kernel $L/I$. 
Since $I$ is an admissible ideal and $L$ is contained in $\Rad kQ$,  $L/I$ is clearly nilpotent.
\end{proof}

In the rest of this section, let $\Lambda$ be a split-by-nilpotent extension of a finite dimensional algebra $A$.

Then, we have two functors: One is a faithful and dense functor $-\otimes_{A} \Lambda:\proj A\to \proj \Lambda.$
The other is a full and dense functor $-\otimes_{\Lambda} A: \proj \Lambda\to \proj A$.
Note that there is a natural isomorphism between $-\otimes_A\Lambda\otimes_\Lambda A$ and the identity functor of $\proj A$.

We show the following easy facts.

\begin{lemma}
\label{split}
Let $f:X\to Y$ be a $\Lambda$-homomorpshim between projective $\Lambda$-modules $X$ and $Y$.
If $f\otimes_\Lambda A$ is a split epimorphism, then so is $f$.
\end{lemma}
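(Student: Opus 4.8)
Let $f\colon X\to Y$ be a $\Lambda$-homomorphism between projective $\Lambda$-modules with $f\otimes_\Lambda A$ a split epimorphism in $\proj A$; we must show $f$ is a split epimorphism.

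\textbf{Overall plan.} The key observation is that a surjection onto a projective module automatically splits, so the whole content of the lemma is to upgrade the hypothesis on $f\otimes_\Lambda A$ to surjectivity of $f$ itself. The plan is therefore: (i) identify $-\otimes_\Lambda A$ with reduction modulo the kernel of the defining split surjection; (ii) use right-exactness together with nilpotence of that kernel (a Nakayama-type argument) to deduce that $f$ is surjective; (iii) invoke projectivity of $Y$ to get a section of $f$.

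\textbf{Surjectivity of $f$.} Let $L$ denote the kernel of the fixed split surjection $\Lambda\to A$, so that $A\cong\Lambda/L$ as left $\Lambda$-modules and $L$ is nilpotent, say $L^n=0$. Then for every right $\Lambda$-module $M$ there is a natural isomorphism $M\otimes_\Lambda A\cong M/ML$. Applying the right-exact functor $-\otimes_\Lambda A$ to the exact sequence $X\xrightarrow{f}Y\to\operatorname{coker}f\to0$ gives $\operatorname{coker}(f\otimes_\Lambda A)\cong(\operatorname{coker}f)\otimes_\Lambda A\cong(\operatorname{coker}f)/(\operatorname{coker}f)L$. Since $f\otimes_\Lambda A$ is a split epimorphism it is in particular surjective, so the left-hand side vanishes; hence $(\operatorname{coker}f)L=\operatorname{coker}f$. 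As $\operatorname{coker}f$ is finitely generated (being a quotient of $Y\in\proj\Lambda$), iterating this identity yields $\operatorname{coker}f=(\operatorname{coker}f)L^n=0$. Thus $f\colon X\twoheadrightarrow Y$ is surjective.

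\textbf{Conclusion.} Since $Y$ is projective, the surjection $f$ admits a section $s\colon Y\to X$ with $f\circ s=\mathrm{id}_Y$; that is, $f$ is a split epimorphism. There is no genuinely hard step here: the only points requiring a little care are the identification $M\otimes_\Lambda A\cong M/ML$ and the fact that the "split" hypothesis is used only to the extent that it makes $f\otimes_\Lambda A$ surjective, the passage from epimorphism to \emph{split} epimorphism being free from projectivity of $Y$. (Alternatively, one can argue directly: lift a section of $f\otimes_\Lambda A$ along the projective cover-type surjection $X\to X/XL$ to a map $\widetilde s\colon Y\to X$, observe that $f\circ\widetilde s=\mathrm{id}_Y-g$ with $g(Y)\subseteq YL$ hence $g$ nilpotent, and correct by $(\mathrm{id}_Y-g)^{-1}$; this variant is the one that also adapts to the dual split-monomorphism statement.)
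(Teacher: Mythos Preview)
Your proof is correct, but the paper takes a genuinely different route. Rather than arguing via surjectivity of $f$ and projectivity of $Y$, the paper exploits the right $\Lambda$-module isomorphism $\Lambda\simeq\Lambda\otimes_\Lambda A\otimes_A\Lambda$ to identify $f':=f\otimes_\Lambda A\otimes_A\Lambda$ with a map $X\to Y$; since $f\otimes_\Lambda A$ is a split epimorphism and $-\otimes_A\Lambda$ is additive, $f'$ is itself a split epimorphism. One then checks that $(f-f')\otimes_\Lambda A=0$, forcing $f-f'$ into the radical of $\Hom_\Lambda(X,Y)$, and concludes that $f$ is a split epimorphism because it differs from one by a radical element.

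Your Nakayama-style argument is more elementary and uses only the nilpotence of $L$ together with the projectivity of $Y$; it never needs the section $A\to\Lambda$. The paper's approach, by contrast, leans on the split structure to build the functor $-\otimes_A\Lambda$ and the comparison $f\mapsto f'$. What the paper's method buys is uniformity: the same radical comparison immediately adapts to other splitting properties (split monomorphisms, isomorphisms) without a separate argument, and it meshes directly with the functorial machinery used in the subsequent proposition on indecomposability. Your alternative at the end (lift a section, correct by inverting $\mathrm{id}_Y-g$) is essentially a by-hand version of the paper's radical argument.
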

\begin{proof}
By $\Lambda\simeq \Lambda\otimes_\Lambda A\otimes_A\Lambda\ (\lambda\leftrightarrow 1\otimes_\Lambda 1\otimes_A \lambda)$ of right $\Lambda$-modules,
we have isomorphisms
$X\simeq X\otimes_\Lambda A\otimes_A \Lambda$ and $Y\simeq Y\otimes_\Lambda A\otimes_A \Lambda$.
Taking these into account, we can regard $f':=f\otimes_\Lambda A\otimes_A\Lambda$ as a $\Lambda$-homomorphism from $X$ to $Y$.
Then, it is easy to check that $(f-f')\otimes_\Lambda A$ is zero, whence $f-f'$ belongs to the radical of $\Hom_{\Lambda}(X,Y)$.
Since $f\otimes_\Lambda A$ is a split epimorphism, so is $f'=f\otimes_\Lambda A\otimes_A\Lambda$.
Hence, we conclude that $f$ is also a split epimorphism.
\end{proof}

\begin{proposition}
\label{indecomposability}
The triangle functor $-\otimes_{A} \Lambda:\Kb(\proj A)\to \Kb(\proj \Lambda) $ preserves the indecomposability of objects.
\end{proposition}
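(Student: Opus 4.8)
The plan is to reduce the statement to the level of endomorphism rings and to exploit the split-by-nilpotent structure together with Lemma \ref{split}. Let $X$ be an indecomposable object of $\Kb(\proj A)$; I want to show that the endomorphism ring $E:=\End_{\Kb(\proj\Lambda)}(X\otimes_A\Lambda)$ is local. Since $X$ is indecomposable and $\Kb(\proj A)$ is a Krull--Schmidt category, $\End_{\Kb(\proj A)}(X)$ is local; write $\mathfrak{m}$ for its maximal (nilpotent-modulo-nothing, but at least: unique maximal) ideal, so that every endomorphism of $X$ is either an isomorphism or lies in $\mathfrak{m}$. The functor $-\otimes_A\Lambda$ induces a ring homomorphism $\End_{\Kb(\proj A)}(X)\to E$, and composing with $-\otimes_\Lambda A$ (which sends $X\otimes_A\Lambda$ back to $X$ up to the natural isomorphism recalled before Lemma \ref{split}) gives a ring homomorphism $E\to \End_{\Kb(\proj A)}(X)$ whose composite with the first map is the identity. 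So $\End_{\Kb(\proj A)}(X)$ is a retract of $E$.

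First I would analyse the kernel of the surjection $\rho:E\to \End_{\Kb(\proj A)}(X)$ coming from $-\otimes_\Lambda A$. The key point is that an endomorphism $\varphi$ of $X\otimes_A\Lambda$ with $\varphi\otimes_\Lambda A=0$ should be nilpotent; this is the chain-level analogue of the computation in Lemma \ref{split}, where $f-f'$ was shown to lie in the radical of the Hom-space because it dies under $-\otimes_\Lambda A$. Concretely, representing $X\otimes_A\Lambda$ by an actual complex of projective $\Lambda$-modules and $\varphi$ by a chain map, the condition $\varphi\otimes_\Lambda A=0$ forces every component of $\varphi$ to have image inside $(\ker(\Lambda\to A))\cdot(\text{stuff})$, i.e. to be built from the nilpotent ideal $L$; since $L$ is nilpotent and the complex is bounded, a suitable power of $\varphi$ is null-homotopic. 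Hence $\ker\rho$ is a nil ideal of $E$.

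With that in hand the argument closes formally: $E$ is a ring, $\ker\rho$ is a nil (hence contained in the Jacobson radical, once we know $E$ is semiperfect — which holds because $\Kb(\proj\Lambda)$ is Krull--Schmidt so $E$ is semiperfect) ideal, and $E/\ker\rho\cong \End_{\Kb(\proj A)}(X)$ is local. A ring whose quotient by an ideal contained in the radical is local is itself local, so $E$ is local, which means $X\otimes_A\Lambda$ is indecomposable. Alternatively, to avoid invoking semiperfectness, I can argue directly with idempotents: given an idempotent $e=e^2\in E$, its image $\rho(e)$ is an idempotent in the local ring $\End_{\Kb(\proj A)}(X)$, hence $0$ or $1$; then $e-\rho(e)$-lift lies in the nil ideal $\ker\rho$ and an idempotent in a nil ideal is zero, so $e\in\{0,1\}$, giving that $X\otimes_A\Lambda$ has no nontrivial idempotents and is therefore indecomposable.

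The main obstacle I anticipate is the second step: making precise and correct the claim that a chain endomorphism killed by $-\otimes_\Lambda A$ is nilpotent up to homotopy. One has to be careful that the relevant maps are only equalities in the homotopy category, so "killed by $-\otimes_\Lambda A$" means null-homotopic after the functor, not literally zero; I would handle this by first replacing $X$ by a minimal complex (so that its image $X\otimes_A\Lambda$ has differentials with entries in $\Rad\Lambda$ modulo the part coming from $L$), then checking that the self-composition of such an endomorphism increases the $L$-adic order of its components, and using boundedness of the complex plus nilpotency of $L$ to conclude. Everything else — the retraction of endomorphism rings and the passage from "quotient is local" to "local" — is formal and short.
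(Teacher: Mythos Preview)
Your approach is correct but differs from the paper's. The paper argues more directly: if $X\otimes_A\Lambda \simeq Y_1 \oplus Y_2$, then applying $-\otimes_\Lambda A$ gives $X \simeq Y_1\otimes_\Lambda A \oplus Y_2\otimes_\Lambda A$, and indecomposability of $X$ forces one summand, say $Y_1\otimes_\Lambda A$, to vanish in $\Kb(\proj A)$. The heart of the proof is then that $-\otimes_\Lambda A$ \emph{reflects zero objects}: if $Y\otimes_\Lambda A$ is contractible, its last differential becomes a split epimorphism after $\otimes_\Lambda A$, so by Lemma~\ref{split} it was already a split epimorphism over $\Lambda$; peeling off the last term and iterating shows $Y\simeq 0$.

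Your endomorphism-ring route also goes through, and the nil-ideal step can be made precise with less trouble than you anticipate. Choosing $X$ minimal over $A$, the complex $X\otimes_A\Lambda$ has differentials in $\Rad\Lambda$ (since $\Rad A\subseteq\Rad\Lambda$ for a split-by-nilpotent extension). Given a chain-map representative $\varphi$ with $\varphi\otimes_\Lambda A$ null-homotopic, the components of $\varphi\otimes_\Lambda A$ lie in $\Rad A$, hence the components of $\varphi$ lie in the preimage $\Rad A + L\subseteq\Rad\Lambda$; thus $\varphi^n$ has components in $(\Rad\Lambda)^n$ and vanishes as a chain map for $n$ large. (Your phrase ``$L$-adic order'' is slightly off unless you first lift the null-homotopy back along $-\otimes_A\Lambda$ and subtract it from $\varphi$, after which the components genuinely land in $L$; but the $\Rad\Lambda$-filtration already does the job without that adjustment.) What the paper's argument buys is the side result that $-\otimes_\Lambda A$ reflects the zero object, which is of independent interest; what yours buys is a clean reduction to locality of endomorphism rings that transplants to any Krull--Schmidt setting with a split surjection of endomorphism algebras having nil kernel.
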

\begin{proof}
Let $X$ be an indecomposable object of $\Kb(\proj A)$. If $X\otimes_{A}\Lambda$
 is the direct sum of $Y_1$ and $Y_2$, then we obtain isomorphisms
 \[X\simeq X\otimes_{A} \Lambda \otimes_{\Lambda} A\simeq Y_1\otimes_{\Lambda}A\oplus Y_2\otimes_{\Lambda} A.\]
 Hence, one has $Y_1\otimes_{\Lambda} A=0$ or $Y_2\otimes_{\Lambda} A=0$.
 
 Thus, we have to show that $Y\otimes_\Lambda A=0$ implies $Y=0$.
 Each object $Y$ of $\Kb(\proj \Lambda)$ has a form 
 \[0\to Y^{a}\to\cdots \to Y^{b-1}\xrightarrow{d} Y^{b}\to 0.\]
 If $Y\otimes_{\Lambda} A=0$, then the complex 
 \[0\to Y^{a}\otimes_{\Lambda} A\to\cdots \to Y^{b-1}\otimes_{\Lambda}A \xrightarrow{d\otimes_\Lambda A} Y^{b}\otimes_{\Lambda} A\to 0\]
 splits out. In particular, $d\otimes_{\Lambda} A$ is a split epimorphism, which implies that
 $d$ is also a split epimorphism by Lemma \ref{split}.
 Repeating this argument leads to the conclusion that
 \[0\to Y^{a}\to\cdots \to Y^{b-1}\stackrel{d}{\to} Y^{b}\to 0\]
also splits out, and hence $Y$ must be $0$.
\end{proof}

Put $\widehat{-}:=-\otimes_{A} \Lambda$.

Now, we consider split-by-nilpotent extensions satisfying the following condition.

\begin{condition}\label{CG}
\begin{enumerate}[(a)]
%
\item For any indecomposable projective $A$-modules $P$ and $P'$, the subspace $\widehat{\Hom_A(P,P')}$ generates $\Hom_\Lambda(\widehat{P}, \widehat{P'})$ as a right $\End_\Lambda(\widehat{P})$-module.

\item Let $P:=\bigoplus_{i=0}^rP_i$ and $P':=\bigoplus_{j=1}^sP'_j$ be indecomposable decompositions of projective $A$-modules $P$ and $P'$. 
Let $f={}^t(f_{i,j})$ be an $A$-homomorphism from $P$ to $P'$, where $f_{i,j}:P_i\to P'_j$ and ${}^tG$ stands for the transpose of a matrix $G$.
If a $\Lambda$-endomorphism $l_0$ of $\widehat{P_0}$ is given, then there exist $\Lambda$-endomorphisms $l_i$ and $l'_j$ of $\widehat{P_i}$ and $\widehat{P'_j}$ satisfying $\widehat{f_{i,j}}\circ l_i=l'_j\circ \widehat{f_{i,j}}$. 
%
\end{enumerate}
\end{condition}


We state a main theorem of this section.

\begin{theorem}
\label{mainresult}
Assume that Condition \ref{CG} holds.
Then the following hold:
\begin{enumerate}[(1)]
\item Let $T$ and $T'$ be two-term presilting complexes of $\Kb(\proj A)$.
Then we have implications:
\[\Hom_{\Kb(\proj A)}(T,T'[1])=0 \iff \Hom_{\Kb(\proj \Lambda)}(\widehat{T},\widehat{T'}[1])=0.\]
In particular, every indecomposable two-term presilting complex of $\Kb(\proj A)$ is sent to that of $\Kb(\proj\Lambda)$ by $\widehat{-}$.
\item The functor $-\otimes_{A} \Lambda:\Kb(\proj A)\to \Kb(\proj \Lambda)$ induces a poset
isomorphism $\tsilt A\to \tsilt \Lambda$.
\end{enumerate}
\end{theorem}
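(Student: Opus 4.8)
The plan is to derive both parts from a careful analysis of the two functors $\widehat{-}=-\otimes_A\Lambda$ and $-\otimes_\Lambda A$ and the homological bookkeeping of two-term complexes. For part (1), I would argue as follows. The backward implication is the easy one: since $-\otimes_\Lambda A$ sends $\widehat{T}$ back to $T$ (up to natural isomorphism) and is a functor, a nonzero map $T\to T'[1]$ in $\Kb(\proj A)$ would, after applying $\widehat{-}$, have image mapping back to the original under $-\otimes_\Lambda A$; but I should be careful, because $\widehat{-}$ need not be faithful on morphisms. Instead the cleaner route is: if $\Hom_{\Kb(\proj\Lambda)}(\widehat T,\widehat{T'}[1])=0$, apply $-\otimes_\Lambda A$ to a chain map representing a class in $\Hom_{\Kb(\proj A)}(T,T'[1])$ after first lifting it through $\widehat{-}$; here one uses that every chain map $\widehat T\to\widehat{T'}[1]$, being zero in the homotopy category, is null-homotopic, and a null-homotopy descends under $-\otimes_\Lambda A$ because $\widehat T\otimes_\Lambda A\cong T$. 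The forward implication is where Condition \ref{CG} does its work: given a chain map $g\colon\widehat T\to\widehat{T'}[1]$, i.e. a $\Lambda$-homomorphism $\widehat{T}^{0}\to\widehat{T'}^{0}$ satisfying the appropriate commutativity with the differentials modulo homotopy, part (a) of the condition lets me write each component as an $\End_\Lambda$-linear combination of morphisms of the form $\widehat{f}$ with $f$ an $A$-map, and part (b) lets me absorb the endomorphism coefficients $l_i,l'_j$ so that the whole map becomes homotopic to one of the form $\widehat{h}$ for an $A$-chain map $h\colon T\to T'[1]$; since $T,T'$ are presilting over $A$ with the vanishing hypothesis, $h$ is null-homotopic over $A$, and applying $\widehat{-}$ to a null-homotopy shows $g$ is null-homotopic over $\Lambda$.

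With part (1) in hand, part (2) is largely formal. First, $\widehat{-}$ sends indecomposable two-term presilting complexes of $\Kb(\proj A)$ to indecomposable two-term presilting complexes of $\Kb(\proj\Lambda)$: two-term-ness is obvious, indecomposability is Proposition \ref{indecomposability}, and presilting is exactly the special case $T=T'$ of part (1). So $\widehat{-}$ carries a basic two-term silting complex $T=\bigoplus T_i$ of $A$ to a basic two-term presilting complex $\widehat T=\bigoplus\widehat{T_i}$ of $\Lambda$ (basicness: the $\widehat{T_i}$ are pairwise non-isomorphic because $-\otimes_\Lambda A$ recovers the $T_i$). That $\widehat T$ is actually silting, not merely presilting, follows because it has the right number of indecomposable summands: $|\widehat T|=|T|=|A|=|\Lambda|$, and a two-term presilting complex with $|\Lambda|$ non-isomorphic indecomposable summands is silting (this is the two-term analogue of completion of presilting to silting; alternatively one can observe that $\widehat{-}$ sends mapping cones to mapping cones and $A$ to $\Lambda$, so the generation condition is preserved). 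Hence $\widehat{-}$ gives a well-defined map $\tsilt A\to\tsilt\Lambda$.

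To see this map is a poset isomorphism, I would proceed in three steps. Order-preservation and order-reflection are both immediate from part (1), since the partial order on $\tsilt$ is defined by vanishing of $\Hom$ into a positive shift, and for two-term complexes only the $[1]$-shift matters. Injectivity: if $\widehat T\cong\widehat{T'}$ in $\Kb(\proj\Lambda)$, apply $-\otimes_\Lambda A$ to get $T\cong T'$; alternatively use the g-vector injectivity of Theorem \ref{gvector} together with the fact that $\widehat{-}$ induces an isomorphism $K_0(\proj A)\xrightarrow{\sim}K_0(\proj\Lambda)$ matching g-vectors. Surjectivity is the main obstacle and the one requiring the most care: given a basic two-term silting complex $S$ of $\Kb(\proj\Lambda)$, I must produce a two-term silting complex $T$ of $\Kb(\proj A)$ with $\widehat T\cong S$. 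The natural candidate is $T:=S\otimes_\Lambda A$, which is two-term and satisfies $|\,T\,|\le|S|=|\Lambda|=|A|$; the content is to show $T$ is presilting over $A$ — using the forward direction of part (1) applied to $\widehat T=\widehat{S\otimes_\Lambda A}$ together with a comparison of $\widehat T$ and $S$ — and that $\widehat T\cong S$, after which $T$ is automatically silting by the cardinality count. The comparison $\widehat{S\otimes_\Lambda A}\cong S$ will not be a natural isomorphism in general (only $-\otimes_\Lambda A\otimes_A\Lambda$ on $\proj\Lambda$ differs from the identity by radical terms, as in the proof of Lemma \ref{split}), so the crux is to run a "difference lies in the radical, hence split off" argument, in the spirit of Lemma \ref{split} and Proposition \ref{indecomposability}, to conclude that $\widehat T$ and $S$ have the same indecomposable summands and therefore agree; Condition \ref{CG} is what guarantees there is no obstruction to this identification at the level of the relevant endomorphism and homomorphism spaces.
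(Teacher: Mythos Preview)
Your outline for part (1) is in the right spirit and matches the paper's overall strategy: reduce to components $\widehat{P_p}\to\widehat{P'_q}$, use condition (a) to write each as a sum of terms $\widehat f\circ l$, and then invoke condition (b). But your description of how (b) is actually deployed is off. You say (b) lets you ``absorb the endomorphism coefficients so that the whole map becomes homotopic to one of the form $\widehat h$''; the paper does not do this, and it is not clear condition (b) gives it. Instead, the paper starts from the already-existing null-homotopy $(h,h')$ of the $A$-map $\iota'_q\circ f\circ\pi_p$ (coming from the hypothesis $\Hom_A(T,T'[1])=0$), and applies condition (b) to the $A$-homomorphism $h'\circ d_T\colon\bigoplus P_u\to\bigoplus P'_t$ to manufacture compatible endomorphisms $l_u,l'_t$ on all summands; a direct computation then assembles an explicit null-homotopy for $\iota'_q\circ\widehat f\circ l_p\circ\pi_p$. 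So (b) is used to propagate the single given endomorphism $l_p$ across the homotopy, not to eliminate it.

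The real gap is in your surjectivity argument for part (2). Your plan is to first prove $\widehat{S\otimes_\Lambda A}\cong S$ by a ``difference lies in the radical'' argument and then deduce that $T:=S\otimes_\Lambda A$ is presilting via part (1). There are two problems. First, part (1) as stated assumes $T,T'$ are already presilting over $A$, so you cannot invoke it to establish presiltingness of $T$ (the easy implication does not need this hypothesis, but then you still need $\widehat T$ presilting first). Second, your radical argument is not fleshed out, and the tool you want for the comparison is exactly the one you cited only for injectivity: Theorem \ref{gvector}. The paper's route avoids the circularity entirely and is much cleaner. It uses that $-\otimes_\Lambda A\colon\proj\Lambda\to\proj A$ is \emph{full} (stated just before Lemma \ref{split}) to see directly that $S\otimes_\Lambda A$ is a two-term silting complex over $A$; then the already-established forward map sends it to a two-term silting complex $\widehat{S\otimes_\Lambda A}$ over $\Lambda$; finally, since $\widehat{S\otimes_\Lambda A}$ and $S$ visibly have the same $g$-vector, Theorem \ref{gvector} gives $\widehat{S\otimes_\Lambda A}\cong S$. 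Note that Condition \ref{CG} plays no role in surjectivity.
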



\begin{proof}
(1) Let $T=[T_1\xrightarrow{d_T}T_0]$ and $T'=[T'_1\xrightarrow{d_{T'}}T'_0]$.
We may suppose that $T$ and $T'$ are indecomposable.
Note that $\add T_0\cap \add T_1=\{0\}=\add T'_0\cap \add T'_1$. 

Assume $\Hom_{\Kb(\proj A)}(T,T'[1])=0$.
Decomposing $T_1$ and $T'_0$ into $T_1=\bigoplus_{p=1}^rP_p$
and $T'_0=\bigoplus_{q=1}^sP'_q$, we observe that $P_p$ and $P'_q$ are not isomorphic by \cite[Lemma 2.25]{AI}.
Denote the canonical epimorphisms and inclusions by
\[\begin{array}{c@{\hspace{1cm}\mbox{and}\hspace{1cm}}c}
\xymatrix{
\widehat{T_1} \ar@<2pt>[r]^{\pi_p} & \widehat{P_p} \ar@<2pt>[l]^{\iota_p}
} & 
\xymatrix{
\widehat{T'_0} \ar@<2pt>[r]^{\pi'_q} & \widehat{P'_q} \ar@<2pt>[l]^{\iota'_q}
}
\end{array}.\]
From the condition (a), 
we have only to show the following claim:
\begin{claim*}
Let $1\leq p\leq r$ and $1\leq q\leq s$.
For any $A$-homomprhism $f:P_p\to P'_q$ and $\Lambda$-endomorphism $l_p$ of $\widehat{P_p}$, the composition $\iota'_q\circ\widehat{f}\circ l_p\circ \pi_p$ is zero in $\Kb(\proj\Lambda)$.
That is, there exist 
$g:\widehat{T_1}\to \widehat{T'_1}$ and
 $g':\widehat{T_0}\to \widehat{T'_0}$ such that
\[\iota'_q\circ \widehat{f}\circ l_p \circ \pi_p=\widehat{d_{T'}}\circ g+g'\circ \widehat{d_T}.\]
\end{claim*}

By $\Hom_{\Kb(\proj A)}(T,T'[1])=0$, 
we have $h:T_1\to T'_1$ and $h':T_0\to T'_0$ such that
\begin{equation}\label{ht0}
\iota'_{q}\circ \widehat{f} \circ \pi_{p}=\widehat{d_{T'}}\circ \widehat{h}+\widehat{h'}\circ \widehat{d_{T}}.
\end{equation}
%
%
Then, one obtains equalities
\[\def\arraystretch{1.8}
\begin{array}{rcl}
\iota'_q\circ \widehat{f}\circ l_{p} \circ \pi_{p}
&=& \iota'_{q}\circ \widehat{f}\circ \pi_{p}\circ \iota_{p}\circ l_{p} \circ \pi_{p}\\
&=& (\widehat{d_{T'}}\circ \widehat{h}+\widehat{h'}\circ \widehat{d_{T}})\circ \iota_{p}\circ l_{p} \circ \pi_{p}\\
&=& \widehat{d_{T'}}\circ \widehat{h} \circ  \iota_{p}\circ l_{p} \circ \pi_{p}
+ \widehat{h'}\circ \widehat{d_{T}} \circ \iota_{p}\circ l_{p} \circ \pi_{p}\\
&=& \widehat{d_{T'}}\circ \widehat{h} \circ  \iota_{p}\circ l_{p} \circ \pi_{p}+
{\displaystyle\sum_{t=1}^s} \iota'_t\circ \pi'_t\circ \widehat{h'}\circ \widehat{d_{T}}\circ \iota_{p}\circ l_{p} \circ \pi_{p}.
\end{array}                                          
\]
Here, we have used that $\pi_p\circ \iota_p$ and $\sum_{t=1}^s\iota'_t\circ \pi'_t$ are identities at the first and the last equalities.

Consider the homomorpshim $(f_{u,t}):=(\pi'_t\circ h'\circ d_T\circ \iota_u): \bigoplus_{u=1}^rP_u\to\bigoplus_{t=1}^sP'_t$.
Since we have an endomorphism $l_p$ of $\widehat{P_p}$, applying the condition (b) one gets endomorphisms $l_u$ an $l'_t$ of $\widehat{P_u}$ and $\widehat{P'_t}$ satisfying $\widehat{f_{u,t}}\circ l_u=l'_t\circ \widehat{f_{u,t}}$.
From the equality above, we see
\[\iota'_{q}\circ \widehat{f}\circ l_{p} \circ \pi_{p}=\widehat{d_{T'}}\circ \widehat{h}  \circ  \iota_{p}\circ l_{p} \circ \pi_{p}+ \sum_{t=1}^s\iota'_{t}\circ l'_t\circ \widehat{f_{p,t}} \circ \pi_{p}.\]
Taking notice to the identity $\sum_{u=1}^r\iota_u\circ \pi_u$,
we find
\[\def\arraystretch{1.8}
\begin{array}{rcl}
{\displaystyle\sum_{t=1}^s} \iota'_{t}\circ l'_t\circ \widehat{f_{p,t}} \circ \pi_{p}
&=& {\displaystyle\sum_{t=1}^s} \iota'_{t}\circ l'_t\circ \pi'_t\circ\widehat{h'}\circ \widehat{d_T}-{\displaystyle\sum_{t=1}^s\sum_{u\neq p}} \iota'_{r}\circ l'_t\circ \widehat{f_{u,t}}\circ \pi_u \\
&=& {\displaystyle\sum_{t=1}^s} \iota'_{t}\circ l'_t\circ \pi'_t\circ\widehat{h'}\circ \widehat{d_T}-{\displaystyle\sum_{t=1}^s} {\displaystyle\sum_{u\neq p}} \iota'_t \circ\widehat{f_{u,t}}\circ l_u\circ \pi_u \\
&=& {\displaystyle\sum_{t=1}^s} \iota'_{t}\circ l'_t\circ \pi'_t\circ\widehat{h'}\circ \widehat{d_T} -{\displaystyle\sum_{u\neq p}} \widehat{h'}\circ \widehat{d_T}\circ \iota_u\circ l_u\circ \pi_u \\
&=& {\displaystyle\sum_{t=1}^s} \iota'_{t}\circ l'_t\circ \pi'_t\circ\widehat{h'}\circ \widehat{d_T}
+{\displaystyle\sum_{u\neq p}} \widehat{d_{T'}}\circ \widehat{h}\circ \iota_u\circ l_u\circ \pi_u.
\end{array}\]
Here, (\ref{ht0}) has been used at the last equality.
Therefore, we obtain 
\[\iota'_q\circ \widehat{f}\circ l_p\circ \pi_p=\widehat{d_{T'}}\circ \left({\displaystyle\sum_{u=1}^r} \widehat{h}\circ \iota_u\circ l_u\circ \pi_u \right)+\left({\displaystyle\sum_{t=1}^s} \iota'_t\circ l'_t\circ \pi'_t\circ \widehat{h'}\right)\circ \widehat{d_T},\]
which is the desired equality.

Next, we assume $\Hom_{\Kb(\proj \Lambda)}(\widehat{T},\widehat{T'}[1])=0.$
It is then easy to check that $\Hom_{\Kb(\proj A)}(T,T'[1])=0$.
In fact, the composition $(-\otimes_{\Lambda}A)\circ(-\otimes_{A}\Lambda)$ is the identity functor.

Now, the assertion (1) follows from Proposition \ref{indecomposability}
 
 (2) Let $T$ be a basic two-term silting complex in $\Kb(\proj A)$. 
 Since $A$ belongs to $\thick T$, we obtain that $\Lambda$ is in $\thick\widehat{T}$.
Then, Proposition \ref{indecomposability} and (1) imply 
that the functor $-\otimes_{A} \Lambda:\Kb(\proj A)\to \Kb(\proj \Lambda)$ induces a poset inclusion $\tsilt A\to \tsilt\Lambda$.

We show the surjectivity.
Let $U$ be a basic two-term silting complex in $\Kb(\proj \Lambda)$.
Since the functor $-\otimes_{\Lambda} A: \proj \Lambda\to \proj A$ is full, it is seen that
$U\otimes_{\Lambda} A$ is a two-term silting complex in $\Kb(\proj A)$.
Hence, we get a two-term silting complex $U\otimes_{\Lambda} A\otimes_A \Lambda$.
Observe that the g-vector of $U\otimes_{\Lambda} A\otimes_A \Lambda$ coinsides with that of $U$, whence they are isomorphic by Theorem \ref{gvector}.
\end{proof}
 
 
\section{Main results}

In this section, we realize our goal of this paper.

Let $i$ be a vertex of $Q$.
Recall that $e_i$ denotes the primitive idempotent of $\Lambda$.
Now, put $P_i=e_i\Lambda$ and $X_i=e_i\Lambda/e_i\Lambda(1-e_i)\Lambda$, which is isomorphic to $\Lambda/(1-e_i)$.
Here, for an element $x$ of $\Lambda$, $(x)$ stands for the two-sided ideal of $\Lambda$ generated by $x$.
We immediately obtain that $X_i$ is a support $\tau$-tilting module with $\supp(X_i)=\{i\}$ and an arrow $X_i\to0$ in $\H(\sttilt\Lambda)$.

We observe the following result.

\begin{lemma}\label{1to2}
Let $Q$ be a quiver with precisely two vertices, say  $1,2$,
and $I$  an admissible ideal of $kQ$. Suppose that there is an arrow $\alpha$ from $1$ to $2$.
Put $\Lambda:=kQ/I$.
Then $X_1\oplus P_2 $ is not $\tau$-rigid.
Moreover, we have
$\Hom_{\Lambda}(P_1,\tau X_1)=0$ if and only if 
$\alpha$ is a unique arrow from $1$ to $2$ and 
$\alpha\Lambda e_2=e_1\Lambda\alpha$.  
\end{lemma}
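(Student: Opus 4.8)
The plan is to compute $\tau X_1$ from a minimal projective presentation of $X_1$ and to translate both assertions into statements about multiplication maps between the spaces $e_i\Lambda e_j$. First I would observe that $X_1=e_1\Lambda/e_1\Lambda e_2\Lambda$ has projective cover $P_1$ with kernel $K:=e_1\Lambda e_2\Lambda$, that $Ke_1\subseteq K\Rad\Lambda$, and that $Ke_2/(K\Rad\Lambda)e_2\cong e_1\Lambda e_2/(e_1\Lambda e_2)(e_2\,\Rad\Lambda\,e_2)$; hence $K$ has top $S_2^{\oplus m}$, where $m:=\dim_k\big(e_1\Lambda e_2/(e_1\Lambda e_2)(e_2\,\Rad\Lambda\,e_2)\big)\geq1$ (nonzero since $\alpha\in e_1\Lambda e_2$), and the minimal projective presentation is $P_2^{\oplus m}\xrightarrow{f}P_1\to X_1\to 0$, where $f$ carries the generator of the $i$-th copy of $P_2$ to some $\beta_i\in e_1\Lambda e_2$ lifting a basis of that quotient. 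Dualizing with $\Hom_\Lambda(-,\Lambda)$ gives $\Tr X_1=\operatorname{coker}\big(\Lambda e_1\to(\Lambda e_2)^{m},\ y\mapsto(y\beta_i)_i\big)$, and the adjunction $\Hom_\Lambda(e_j\Lambda,DN)\cong D(e_jN)$ then yields, for $j=1,2$,
\[
\Hom_\Lambda(P_j,\tau X_1)\;\cong\;D\big(e_j\Tr X_1\big)\;=\;D\Big((e_j\Lambda e_2)^{m}\big/\{(z\beta_i)_i\mid z\in e_j\Lambda e_1\}\Big).
\]

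The first assertion follows quickly from this: each $\beta_i$ lies in $\Rad\Lambda$, so the map $\Lambda e_1\to(\Lambda e_2)^{m}$ above has image inside $\Rad\Lambda\cdot(\Lambda e_2)^{m}$; therefore $\Tr X_1$ has top $(S_2^{\mathrm{op}})^{m}\neq0$, whence $e_2\Tr X_1\neq0$ and $\Hom_\Lambda(P_2,\tau X_1)\neq0$. Since $\tau P_2=0$, this gives $\Hom_\Lambda(X_1\oplus P_2,\tau(X_1\oplus P_2))\supseteq\Hom_\Lambda(P_2,\tau X_1)\neq0$, so $X_1\oplus P_2$ is not $\tau$-rigid.

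For the second assertion, the case $j=1$ shows that $\Hom_\Lambda(P_1,\tau X_1)=0$ exactly when the multiplication map $\mu\colon e_1\Lambda e_1\to(e_1\Lambda e_2)^{m}$, $z\mapsto(z\beta_i)_i$, is surjective. I would first prove by a dimension count that surjectivity of $\mu$ forces $m=1$: one has $\operatorname{im}\mu\subseteq k\cdot(\beta_1,\dots,\beta_m)+V^{m}$ with $V:=(e_1\,\Rad\Lambda\,e_1)(e_1\Lambda e_2)$, and $V$ is a proper subspace of $e_1\Lambda e_2$ because $V\subseteq\Rad^2\Lambda$ while $\alpha\notin\Rad^2\Lambda$ (admissibility of $I$). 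When $m=1$ one may take $\beta_1=\alpha$, so $\operatorname{im}\mu=e_1\Lambda\alpha$ and surjectivity becomes the equality $e_1\Lambda\alpha=e_1\Lambda e_2$. Thus $\Hom_\Lambda(P_1,\tau X_1)=0$ iff ($m=1$ and $e_1\Lambda\alpha=e_1\Lambda e_2$), and it remains to identify this with the stated conditions. For ``$\Rightarrow$'': $m=1$ means $\alpha$ spans $e_1\Lambda e_2/(e_1\Lambda e_2)(e_2\,\Rad\Lambda\,e_2)$, which rules out a second arrow $1\to2$ (distinct arrows stay linearly independent there), and telescoping $e_1\Lambda e_2=k\alpha+(e_1\Lambda e_2)(e_2\,\Rad\Lambda\,e_2)$ against the powers of the nilpotent $e_2\,\Rad\Lambda\,e_2$ collapses it to $e_1\Lambda e_2=\alpha\Lambda e_2$; combined with $e_1\Lambda\alpha=e_1\Lambda e_2$ this gives $\alpha\Lambda e_2=e_1\Lambda\alpha$. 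For ``$\Leftarrow$'': if $\alpha$ is the unique arrow $1\to2$, then every path $1\to2$ factors as (a path $1\to1$)$\,\cdot\,\alpha\,\cdot\,$(a path $2\to2$), so $e_1\Lambda e_2=(e_1\Lambda\alpha)(e_2\Lambda e_2)$, and if in addition $\alpha\Lambda e_2=e_1\Lambda\alpha$ this simplifies to $e_1\Lambda e_2=\alpha\Lambda e_2=e_1\Lambda\alpha$, from which $m=1$ and $\operatorname{im}\mu=e_1\Lambda\alpha=e_1\Lambda e_2$ are immediate. Combining these yields the equivalence.

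The step I expect to be the main obstacle is this last identification: the $\tau$-rigidity criterion naturally produces the invariant $m$, which is genuinely \emph{not} the same as ``$\alpha$ is the unique arrow $1\to2$'' — a loop at the vertex $1$ can make $m\geq2$ even when $\alpha$ is the only arrow $1\to2$ — so the heart of the argument is the telescoping identity $m=1\Rightarrow e_1\Lambda e_2=\alpha\Lambda e_2$ together with the factorization of paths through $\alpha$ used for the converse; these are precisely what bring in, and pin down, the symmetry condition $e_1\Lambda\alpha=\alpha\Lambda e_2$. The other ingredients — the shape of the minimal presentation of $X_1$, the dualization and the adjunction above, and the dimension count — are routine once they are set up.
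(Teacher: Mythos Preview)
Your argument is correct and complete. The approach, however, is genuinely different from the paper's.

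For the forward implication, the paper does not compute $e_1\Tr X_1$ directly. Instead it argues structurally: once $\Hom_\Lambda(P_1,\tau X_1)=0$, the module $\tau X_1$ is an indecomposable $\tau^-$-rigid module with support $\{2\}$, hence the unique support $\tau^-$-tilting module with that support, namely $D(\Lambda e_2/\Lambda e_1\Lambda e_2)$. This forces the minimal projective presentation of $X_1$ to be $e_2\Lambda\xrightarrow{\alpha\cdot-}e_1\Lambda$, so $r=1$ and the uniqueness of $\alpha$ come for free; the identity $\alpha\Lambda e_2=e_1\Lambda\alpha$ is then read off by comparing $\Tr X_1\cong\Lambda e_2/\Lambda\alpha$ with $\Lambda e_2/\Lambda e_1\Lambda e_2$. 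By contrast, you keep the full presentation $P_2^{\oplus m}\to P_1$, identify $\Hom_\Lambda(P_1,\tau X_1)=0$ with surjectivity of the multiplication map $\mu$, and use a dimension count (image contained in $k\cdot(\beta_1,\dots,\beta_m)+V^m$ with $V\subsetneq e_1\Lambda e_2$) to force $m=1$. Your route is more elementary and entirely self-contained---it never invokes support $\tau^-$-tilting modules---while the paper's route is shorter once that machinery is available. Your telescoping step $m=1\Rightarrow e_1\Lambda e_2=\alpha\Lambda e_2$ and the path-factorization for the converse are exactly the extra work needed to avoid the structural shortcut; both are carried out correctly.
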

\begin{proof} 
%
Since a minimal projective presentation of $X_1$ has the form
\[P_2^{\oplus r}\to P_1 \to X_1\to 0\]
with $r>0$, there exists a non-zero homomorphism from $P_2$ to $\tau X_1$. 
Hence, we see that $X_1\oplus P_2$ is not $\tau$-rigid.

Assume $\Hom_{\Lambda}(P_1,\tau X_1)=0$.
It is observed that $\tau X_1$ is a support $\tau^-$-tilting module with support $\{2\}$, which implies $\tau X_1=D(\Lambda e_2/\Lambda e_1\Lambda e_2)$.
Therefore, we get a minimal projective presentation
\[e_2\Lambda\stackrel{f}{\to} e_1\Lambda\to X_1\to 0\] 
of $X_1$, whence there is an epimorphism $f:e_2\Lambda\to e_1\Lambda e_2\Lambda$.
This leads to the conclusion that $e_1\Lambda e_2\Lambda$ has a simple top, and so there is no arrow from $1$ to $2$ other than $\alpha$.

We show $\alpha\Lambda e_2=e_1\Lambda\alpha$.
Without loss of generality, we may assume that $f$ is the left multiplication by $\alpha$.
Then, one has an inclusion
\[e_1\Lambda \alpha\subseteq e_1\Lambda e_2 \Lambda=\alpha \Lambda,\]
which yields $e_1\Lambda \alpha=e_1\Lambda \alpha e_2\subseteq\alpha \Lambda e_2$.
On the other hand, we have the following commutative diagram:
\[\xymatrix{
(e_1\Lambda)^* \ar[d] \ar[r]^{f^*} & (e_2\Lambda)^* \ar[d] \ar[r] & \Tr X_1 \ar[r] & 0 \\
\Lambda e_1 \ar[r]_{-\cdot\alpha} & \Lambda e_2 \ar[r] & \Lambda e_2/\Lambda \alpha \ar[r] & 0
}\]
of exact sequences,
where $(-)^*$ and $\Tr$ stand for the $\Lambda$-dual and the transpose. 
Here, the vertical arrows are isomorphisms.
We get isomorphisms
\[\Lambda e_2/\Lambda \alpha\simeq \Tr X_1 \simeq D\tau X_1 \simeq \Lambda e_2/\Lambda e_1 \Lambda e_2.\]
By $\Lambda \alpha\subseteq \Lambda e_1 \Lambda e_2$, one sees that $\Lambda \alpha= \Lambda e_1 \Lambda e_2$,
whence
\[\alpha \Lambda e_2\subseteq \Lambda e_1 \Lambda e_2=\Lambda \alpha.\]
Thus, it is obtained that $\alpha \Lambda e_2=e_1\alpha \Lambda e_2\subseteq e_1\Lambda \alpha.$

Next, assume that there is a unique arrow $\alpha$ from $1$ to $2$ and $\alpha\Lambda e_2=e_1\Lambda\alpha$.
We have 
\begin{equation}\label{rightideal}
e_1 \Lambda e_2 \Lambda= e_1\Lambda \alpha \Lambda =\alpha \Lambda e_2\Lambda =\alpha \Lambda,
\end{equation}
which implies that the sequence
\[e_2\Lambda\xrightarrow{\alpha\cdot -} e_1\Lambda\to X_1 \to 0\] 
is a minimal projective presentation of $X_1$.
Hence, the transpose of $X_1$ is isomprhic to $\Lambda e_2/\Lambda \alpha$.
The dual argument of (\ref{rightideal}) leads to the conclusion
that $\Lambda \alpha=\Lambda e_1 \Lambda e_2$,
whence $\tau X_1 \simeq D(\Lambda e_2/\Lambda e_1 \Lambda e_2)$.
Thus, it finds out that there is no non-zero homomorphism $P_1\to \tau X_1$.
\end{proof}


The following proposition is an immediate consequence of Lemma \ref{1to2}.

\begin{proposition}\label{mr}
Let $Q, \alpha$ and $\Lambda$ be as in Lemma \ref{1to2}.
\begin{enumerate}[(1)]
\item $\H(\sttilt\Lambda)$ is of type $\widetilde{A}_{3,2}$ if and only if the following hold.
\begin{enumerate}[(a)]
\item $\alpha$ is a unique arrow of $Q$ from $1$ to $2$ and $\alpha\Lambda e_2=e_1\Lambda \alpha$.
\item There is no arrow of $Q$ from $2$ to $1$. 
\end{enumerate} 

\item $\H(\sttilt\Lambda)$ is of type $\widetilde{A}_{3,3}$ if and only if the following hold.
\begin{enumerate}[(a)]
\item $\alpha$ is a unique arrow of $Q$ from $1$ to $2$ and $\alpha\Lambda e_2=e_1\Lambda \alpha$.
\item There exists a unique arrow $\beta$ of $Q$ from $2$ to $1$ and $\beta\Lambda e_1=e_2\Lambda \beta$.
\end{enumerate} 
\end{enumerate}
\end{proposition}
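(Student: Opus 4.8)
The plan is to determine the whole Hasse quiver $\H(\sttilt\Lambda)$ from the behaviour of the four modules $\Lambda$, $0$, $X_1$, $X_2$, using only Theorem~\ref{basicprop} and Lemma~\ref{1to2}. I will use freely that $\sttilt\Lambda$ has greatest element $\Lambda$ and least element $0$, that $\H(\sttilt\Lambda)$ is connected and $2$-regular (since $|\Lambda|=2$), that $X_i\to 0$ is an arrow, and that the $\fac$-order gives $X_i<X_i\oplus P_i<\Lambda$ whenever $X_i\oplus P_i$ is $\tau$-rigid. The crucial elementary point is that \emph{$X_i\oplus P_i$ is a support $\tau$-tilting module if and only if $\Hom_\Lambda(P_i,\tau X_i)=0$}: indeed $X_i$ and $P_i$ are $\tau$-rigid, $|X_i\oplus P_i|=|\Lambda|$, and $\tau P_i=0$, so this is just the unravelling of the definition. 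By Lemma~\ref{1to2} this condition for $i=1$ is equivalent to (a), and, applying Lemma~\ref{1to2} with the two vertices interchanged (legitimate once an arrow $2\to 1$ is present), for $i=2$ it is equivalent to (b${}'$). I also record from Lemma~\ref{1to2} that $X_1\oplus P_2$ is never $\tau$-rigid (and $X_2\oplus P_1$ is not $\tau$-rigid once there is an arrow $2\to 1$), together with the trivial fact that $X_i\cong P_i$ exactly when $\supp P_i=\{i\}$, i.e.\ exactly when there is no arrow out of $i$ to the other vertex.

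For the ``if'' direction of (1), assume (a) and (b). Then $X_1\oplus P_1\in\sttilt\Lambda$, and $X_2\cong P_2$ by (b), so that $X_2$ is the support $\tau$-tilting pair $(P_2,P_1)$. I would then exhibit, for each of the five almost complete support $\tau$-tilting pairs $(P_1,0)$, $(P_2,0)$, $(X_1,0)$, $(0,P_2)$, $(0,P_1)$, its two completions: $(P_1,0)$ completes to $\Lambda$ and to $X_1\oplus P_1$; $(P_2,0)$ to $\Lambda$ and to $X_2$; $(X_1,0)$ to $X_1$ and to $X_1\oplus P_1$; $(0,P_2)$ to $X_1$ and to $0$; $(0,P_1)$ to $X_2$ and to $0$. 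In each line the two pairs listed are distinct (compare supports) and $\tau$-rigid, hence by Theorem~\ref{basicprop}(1) they are \emph{all} the completions; so by Theorem~\ref{basicprop}(2) the five vertices $\Lambda$, $X_1\oplus P_1$, $X_1$, $X_2$, $0$ carry precisely the arrows $\Lambda\to X_1\oplus P_1\to X_1\to 0$ and $\Lambda\to X_2\to 0$ (directions from the $\fac$-order) and nothing more. This is a finite connected $2$-regular subquiver of $\H(\sttilt\Lambda)$, hence equal to it by Theorem~\ref{basicprop}(3); so $\H(\sttilt\Lambda)$ is of type $\widetilde A_{3,2}$. The ``if'' direction of (2) is the same computation, with $X_2\cong P_2$ replaced by the module $X_2\oplus P_2\in\sttilt\Lambda$ (available by (b${}'$)), producing the hexagon $\Lambda\to X_1\oplus P_1\to X_1\to 0\leftarrow X_2\leftarrow X_2\oplus P_2\leftarrow\Lambda$, i.e.\ type $\widetilde A_{3,3}$.

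For the ``only if'' direction of (1), suppose $\H(\sttilt\Lambda)\cong\widetilde A_{3,2}$. Since $0$ is the unique minimal and $\Lambda$ the unique maximal element, every vertex but $0$ has an out-arrow and every vertex but $\Lambda$ an in-arrow; hence the pentagon $\H(\sttilt\Lambda)$ splits into two directed arcs from $\Lambda$ to $0$, of lengths $2$ and $3$ (length $1$ is impossible because no cover of $\Lambda$ has empty support). The bottom vertex of the length-$2$ arc is a cover of $0$, hence is $X_1$ or $X_2$, and it is covered by $\Lambda$; but $\Lambda$ and $X_i$ are joined by an arrow iff they share an almost complete pair, which, comparing supports of indecomposable summands, forces $P_i\cong X_i$, i.e.\ no arrow out of $i$. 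Since there \emph{is} an arrow $1\to 2$, this rules out $i=1$, so the arc is $\Lambda\to X_2\to 0$ and (b) holds. The length-$3$ arc is $\Lambda\to Y\to X_1\to 0$, and $Y$, being the cover of $X_1$, is the completion of $(X_1,0)$ other than $X_1$, so $X_1$ is a direct summand of $Y$: $Y=X_1\oplus N$ with $N$ indecomposable. The neighbour of $Y$ obtained by exchanging the summand $X_1$ is distinct from $X_1$, hence equals $\Lambda$, so $N$ is an indecomposable summand of $\Lambda$, i.e.\ $N\in\{P_1,P_2\}$; and $N=P_2$ is impossible since $X_1\oplus P_2$ is not $\tau$-rigid. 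Thus $Y=X_1\oplus P_1\in\sttilt\Lambda$, whence $\Hom_\Lambda(P_1,\tau X_1)=0$ and (a) holds by Lemma~\ref{1to2}. For (2) the same argument on one length-$3$ arc gives $X_1\oplus P_1\in\sttilt\Lambda$ and hence (a); $X_1\oplus P_1$ then has $P_1$ as a summand, so it is the cover of $\Lambda$ sharing $(P_1,0)$ with $\Lambda$, forcing the cover $Y'$ on the other arc to share $(P_2,0)$ with $\Lambda$; writing $Y'=X_2\oplus N'$ as above and using that $P_2$ is a summand of $Y'$ with $P_2\not\cong X_2$ (else $Y'=\Lambda$ or $Y'$ is not basic) one gets $N'=P_2$, so $X_2\oplus P_2\in\sttilt\Lambda$; this module has full support, so $P_2e_1\neq 0$, an arrow $2\to 1$ exists, and Lemma~\ref{1to2} with the vertices interchanged then yields (b${}'$).

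The one step requiring care is the pigeonhole argument with Theorem~\ref{basicprop}(1): one must make sure that the ``second'' completion of each almost complete pair is forced to be one of the handful of modules already in play. This is exactly where the negative statements of Lemma~\ref{1to2} — together with $\Hom_\Lambda(P_1,X_1)\neq 0$, which disqualifies the pair $(X_1,P_1)$ — are indispensable; everything else reduces to comparing supports and the subcategories $\fac(-)$.
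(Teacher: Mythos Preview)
Your proposal is correct and follows essentially the same approach as the paper: identify the vertices of the pentagon/hexagon with the modules $\Lambda,\,X_1,\,X_2,\,X_1\oplus P_1,\,(X_2\oplus P_2),\,0$ using Theorem~\ref{basicprop} and then invoke Lemma~\ref{1to2}. The paper proves only the ``only if'' part of (1) in detail and leaves the converse and case~(2) to the reader; your write-up fills in exactly those steps (the explicit list of almost complete pairs and their two completions, and the argument forcing $P_2\not\cong X_2$ in the hexagon), so the difference is one of completeness rather than strategy. The only place your exposition is slightly loose is the parenthetical ``else $Y'=\Lambda$ or $Y'$ is not basic'': the clean way to exclude $P_2\cong X_2$ is to note that then $(P_2,0)=(X_2,0)$ would have the three completions $\Lambda$, $Y'$, and $(X_2,P_1)$, contradicting Theorem~\ref{basicprop}(1).
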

\begin{proof}
We only show the statement (1): The other can be handled by a similar way.

Assume that $\H(\sttilt\Lambda)$ is of type $\widetilde{A}_{3,2}$, say 
\[\xymatrix@R=0.2cm{
& \Lambda \ar[dl] \ar[ddr] & \\
M_1 \ar[dd] & &  \\
 && M_2 \ar[ddl] \\
M_3 \ar[dr] && \\
& 0 & 
}\]
Since $X_1$ is a support $\tau$-tilting module having an arrow $X_1\to0$ but not projective, $M_3$ must be $X_1$.
Therefore, $M_2$ finds out to be $X_2$, which is a projective module $P_2$.
This implies that there is no non-zero homomorphism $P_1\to P_2$, whence $Q$ has no arrow $2\to 1$.
It follows from Theorem \ref{basicprop} that $X_1$ and $P_1$ are direct summands of $M_1$, that is, $M_1=X_1\oplus P_1$,
whence $\Hom_\Lambda(P_1, \tau X_1)=0$.
As Lemma \ref{1to2}, we have the assertion (a).


The converse can be checked easily.
\end{proof}


Let $M$ be a support $\tau$-tilting module.
We denote by $\dpr(M)$ the set of direct predecessors of $M$ in $\H(\sttilt\Lambda)$.
Note that for any module $N$ in $\dpr(X_i)$, the number of vertices in $\supp(N)$ is precisely 2.

In the rest of this section, let $\ORA{\T}$ be a tree quiver.

The following result plays an important role.

\begin{lemma}\label{determiningarrow} 
Assume that $\sttilt\Lambda\simeq \sttilt k\ORA{\T}$. 
\begin{enumerate}[(1)]
\item 
There is a unique support $\tau$-tilting module $M_{i,j}$ in $\dpr(X_i)\cup \dpr(X_j)$ satisfying $M_{i,j}\geq X_i ,X_j $. 
\item The interval $[0,M_{i,j}]$ coincides with $\sttilt_P\Lambda$,
where $P:=\bigoplus_{\ell\neq i,j}P_\ell$,
and it has one of the following forms:
\[\begin{array}{@{{\rm (i)}}c@{\hspace{1cm}{\rm (ii)}}c@{\hspace{1cm}{\rm (iii)}}c}
\xymatrix{
& M_{i,j} \ar[dl] \ar[dr] & \\
X_i \ar[dr] & & X_j \ar[dl] \\
& 0 & 
} &
\xymatrix@R=0.2cm{
& M_{i,j} \ar[dl] \ar[ddr] & \\
\circ \ar[dd] & &  \\
 && X_j \ar[ddl] \\
X_i \ar[dr] && \\
& 0 & 
} &
\xymatrix@R=0.2cm{
& M_{i,j} \ar[dl] \ar[ddr] & \\
\circ \ar[dd] & &  \\
 && X_i \ar[ddl] \\
X_j \ar[dr] && \\
& 0 & 
}\end{array}\]
\item In the case of $\mathrm{(i)}$, there is no arrow of $Q$ between $i$ and $j$.
\item In the case of $\mathrm{(ii)}$, there is a unique arrow of $Q$ from $i$ to $j$.
\end{enumerate} 
\end{lemma}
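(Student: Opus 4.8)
The plan is to reduce everything to the two-vertex situation already analyzed in Lemma \ref{1to2} and Proposition \ref{mr}, via the reduction isomorphism $\sttilt_P\Lambda\simeq\sttilt\Lambda/(e)$ recorded just before Question \ref{questionlattice}, where $e:=\sum_{\ell\neq i,j}e_\ell$. First I would establish (1) and (2) together. Since $\sttilt\Lambda\simeq\sttilt k\ORA{\T}$ as posets and $\ORA{\T}$ is a tree, I want to identify, inside this poset, the interval lying above the two coatoms-over-$0$ corresponding to $X_i$ and $X_j$. The key observation is that $X_i$ (resp. $X_j$) is the unique non-projective support $\tau$-tilting module admitting an arrow to $0$ with $\supp=\{i\}$ (resp. $\{j\}$), and by the note after $\dpr$, every module in $\dpr(X_i)$ has exactly two supported vertices. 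Because in $\sttilt k\ORA{\T}$ the support of a support $\tau$-tilting module with an arrow to a rank-one module must correspond to a connected subquiver — here an edge or a single vertex — the join $X_i\vee X_j$ in the poset (which exists by the lattice structure of $\sttilt k\ORA{\T}$, available via Theorem \ref{IRTT} in the representation-finite case and by a direct check otherwise, or simply by inspecting the Hasse quiver of a tree algebra) has support $\{i,j\}$ and is the required $M_{i,j}$; its uniqueness in $\dpr(X_i)\cup\dpr(X_j)$ with $M_{i,j}\ge X_i,X_j$ follows since any such module lies in $\sttilt_P\Lambda$ and is the maximum there.

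For (2), once $M_{i,j}$ is the maximum of $\sttilt_P\Lambda$, the interval $[0,M_{i,j}]$ is exactly $\sttilt_P\Lambda\simeq\sttilt\Lambda/(e)$, and the quotient $\Lambda/(e)$ is presented by the subquiver of $Q$ on the two vertices $i,j$. Now I invoke the poset-isomorphism hypothesis restricted to this interval: it must match the corresponding interval in $\sttilt k\ORA{\T}$, which is the support $\tau$-tilting poset of $k$ applied to the full subquiver of $\ORA{\T}$ on $\{i,j\}$ — either two disconnected points (giving the square (i)) or a single edge, i.e.\ $A_2$ with one orientation (giving the five-element poset (ii) or (iii) depending on which of $i,j$ is the source). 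So $[0,M_{i,j}]$ has one of the three listed shapes, with the placement of $X_i$ versus $X_j$ at the bottom pinned down by which vertex is a source of the $A_2$ subquiver. This is the step I expect to be slightly delicate: I must argue that the position of $X_i$ (bottom-left vs. top-right) in cases (ii)/(iii) is forced by the orientation data on the tree side and not merely by an abstract poset isomorphism — but since the poset of $\sttilt kA_2$ for the two orientations are literally the same abstract poset, the distinction lives only in the labeling $X_i$ vs $X_j$, and that labeling is determined because $X_i\to 0$ is an arrow of $\H(\sttilt\Lambda)$ sitting at a prescribed spot. So cases (ii) and (iii) are simply the two ways of naming the coatom-over-$0$, and both genuinely occur.

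Finally, (3) and (4) follow by feeding this local structure back through Lemma \ref{1to2} and Proposition \ref{mr}. In case (i), $[0,M_{i,j}]$ is the four-element Boolean poset, which is $\H(\sttilt k(\text{two isolated points}))$; by Proposition \ref{mr} (contrapositive, or rather by Lemma \ref{1to2} directly applied to $\Lambda/(e)$), if there were any arrow between $i$ and $j$ in $Q$, then $X_i\oplus P_j$ (in the algebra $\Lambda/(e)$) would fail to be $\tau$-rigid, forcing $\H(\sttilt\Lambda/(e))$ to be of type $\widetilde A_{3,2}$ or $\widetilde A_{3,3}$, a contradiction with the square shape. Hence no arrow between $i$ and $j$. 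In case (ii), $\H(\sttilt\Lambda/(e))$ is precisely the type-$\widetilde A_{3,2}$ shape with $X_i$ playing the role of $M_3$ in Proposition \ref{mr}(1); that proposition then yields that there is a unique arrow of $Q$ from $i$ to $j$ (and none from $j$ to $i$). The main obstacle, as flagged, is making rigorous the claim that the abstract poset isomorphism transports the distinguished vertices $X_i,X_j,0$ correctly — but this is handled by noting that each of $X_i$, $X_j$, $0$ is characterized intrinsically inside $\sttilt\Lambda$ (by support and by being a source/sink of a specified arrow), so their images under the isomorphism are forced, and the three listed pictures exhaust the possibilities.
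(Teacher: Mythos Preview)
Your overall strategy---reduce to the two-vertex quotient $\Lambda/(e)$ and invoke Proposition~\ref{mr}---matches the paper's, and parts (3) and (4) go through essentially as you describe. But there is a genuine gap in how you establish (1) and (2), and it sits exactly at the step you flag as ``slightly delicate''.

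First, your appeal to the join $X_i\vee X_j$ is not available in general. By Theorem~\ref{IRTT}, $\sttilt k\ORA{\T}$ is a lattice \emph{only} when $\ORA{\T}$ is Dynkin (or has at most two vertices); for a non-Dynkin tree the poset is not a lattice, and your ``direct check otherwise'' is not given. Even when the join does exist, you do not justify why it has support exactly $\{i,j\}$, nor why it lies in $\dpr(X_i)\cup\dpr(X_j)$, nor---most importantly---why it equals the maximum $\Lambda/(e)$ of $\sttilt_P\Lambda$. Your sentence ``any such module lies in $\sttilt_P\Lambda$ and is the maximum there'' asserts exactly what must be proved: an element of $\dpr(X_i)\cup\dpr(X_j)$ dominating both $X_i,X_j$ certainly has support $\{i,j\}$ and hence lies in $\sttilt_P\Lambda$, but nothing in your argument forces it to be the top of that subposet.

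The paper avoids both problems by a different mechanism. It first treats the case $\Lambda=k\ORA{\T}$ directly, where $\Lambda/(e)$ is explicitly one of $k(\bullet\ \bullet)$, $k\ORA{A_2}$, or $k(\ORA{A_2})^{\mathrm{op}}$, so the three pictures and the existence/uniqueness of $M_{i,j}$ are immediate. Since (1) is a purely poset-theoretic statement (phrased only in terms of Hasse arrows and $\geq$), it transports through $\rho$ to arbitrary $\Lambda$. For (2), the key idea you are missing is a \emph{regularity argument}: the interval $[0,M_{i,j}]$, having one of the three transported shapes, is $2$-regular; it is also an induced subposet of $\sttilt_P\Lambda\simeq\sttilt\Lambda/(e)$, which is itself $2$-regular by Theorem~\ref{basicprop}. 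Hence $[0,M_{i,j}]$ is a finite connected component of $\H(\sttilt\Lambda/(e))$, and Theorem~\ref{basicprop}(3) forces it to be all of $\sttilt_P\Lambda$. This is what pins down $M_{i,j}=\Lambda/(e)$ without ever invoking a lattice structure, and it is the step your plan needs to add.
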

\begin{proof}
Put $e=1-e_i-e_j$, $P=e \Lambda$ and note that $\sttilt_P\Lambda\simeq \sttilt\Lambda/(e)$.
We also remark that for every $M\in\dpr(X_i)\cup \dpr(X_j)$ with $M\geq X_i, X_j$, the support of $M$ is just $\{i,j\}$.

We first consider the case $\Lambda=k\ORA{\T}$.
Since $\T$ is a tree, we observe that one of the following isomorphisms hold:
\[\Lambda/(e)\simeq\begin{cases}
\ X_j\oplus e_i\Lambda/e_i\Lambda e\Lambda & \mbox{if $Q$ has an arrow $i\to j$}; \\
\ X_i\oplus e_j\Lambda/ e_j\Lambda e \Lambda & \mbox{if $Q$ has an arrow $j\to i$}; \\
\ X_i\oplus X_j & \mbox{if $Q$ has no arrow between  $i$ and $j$}.
\end{cases}\]
Note that $\Lambda/(e)$ is a maximum element of $\sttilt_P \Lambda$.
%
%
Thus, all the assetions hold.

Let $\Lambda$ be an arbitrary algebra with $\sttilt\Lambda\simeq\sttilt k\ORA{\T}$.
It is evident to satisfy the assertion (1).
Let $\rho:\sttilt\Lambda\xrightarrow{\sim} \sttilt k\ORA{\T}$.
By the argument for $\Lambda=k\ORA{\T}$, we see that
$[0,M_{i,j}]\simeq[0,\rho(M_{i,j})]$ has one of the forms (i), (ii) and (iii).

We show $\sttilt_P\Lambda=[0, M_{i,j}]$ to establish the assertion (2).
As $\supp(M_{i,j})=\{i,j\}$, one sees that $[0,M_{i,j}]$ is an induced subposet of $\sttilt_P\Lambda$.
It follows from $\sttilt_P\Lambda\simeq \sttilt\Lambda/(e)$
that $\sttilt_P\Lambda$ is a 2-regular poset.
Since $[0,M_{i,j}]$ is also 2-regular, it coincides with $\sttilt_P\Lambda$.

We show the assertion (3). The form (i) implies that $X_i$ and $X_j$ are projectives of $\Lambda/(e)$.
From $\supp(X_i)=\{i\}$ and $\supp(X_j)=\{j\}$, there is no arrow between $i$ and $j$.

Finally, we show the asssertion (4). Since $X_j$ is a projective $\Lambda/(e)$-module, there is  no arrow from $j$ to $i$.
If the quiver of $\Lambda/(e)$ is not connected, then $\sttilt_P \Lambda\simeq \sttilt \Lambda/(e)$ is of the form (i).
So, it is connected, whence there is an arrow $\alpha$ from $i$ to $j$. Such an arrow is unique by Proposition \ref{mr}.
%
%
\end{proof}

\begin{remark}
\label{remark}
Lemma \ref{determiningarrow} also shows that if $\sttilt\Lambda\simeq \sttilt k\ORA{\T}$, then $Q^\circ\simeq \ORA{\T}$.
Actually, a poset isomorphism $\rho:\sttilt\Lambda\xrightarrow{\sim} \sttilt k\ORA{\T}$ 
induces a bijection $\sigma=\sigma_\rho :Q_0\to \ORA{\T}_0$ 
by the following correspondence:
\[\rho(X_i)=S_{\sigma(i)},\]
where $S_i$ denotes a simple module corresponding to a vertex $i$. 
Then by Lemma \ref{determiningarrow}, $\sigma$ can be extended to a quiver automorphism
\[\sigma:Q^\circ\xrightarrow{\sim}\ORA{\T}.\]           
\end{remark}

Let $\ORA{A_n}:=1\xrightarrow{\alpha_1} 2\xrightarrow{\alpha_2}\cdots\xrightarrow{\alpha_{n-1}} n$.

Now, assume that $\sttilt\Lambda\stackrel{\rho}{\simeq} \sttilt k\ORA{\T}$,
and by a suitable reindexing, we suppose that $\rho(X_i)=S_i$ and $Q^\circ=\ORA{\T}$.
Let $i_1\to\cdots\to i_r$ be a path of $\ORA{\T}$.
For $t=0,1,\cdots, r-1$,
put 
\[\def\arraystretch{1.5}
\begin{array}{rl}
e(t) &:= 1-e_{i_{r}}-\cdots-e_{i_{r-t}} \\
J(t) &:=J_\Lambda(t):=\Lambda/(e(t)).
\end{array}\]
Note that $J(t)$ is the maximum element of $\sttilt_{e(t)\Lambda}\Lambda$ with support $\{i_r,\cdots, i_{r-t}\}$ and that $J(t)=J(t-1)\oplus e_{i_{r-t}} \Lambda/e_{i_{r-t}} \Lambda e(t) \Lambda$.
\begin{lemma}\label{determiningjoin}
Under the setting above, the following hold.
\begin{enumerate}[(1)]
\item Let $M$ be a support $\tau$-tilting module of $\Lambda$ and fix $t=1,\cdots,r-1$.
 Then $M=J(t)$ if and only if it satisfies the following:
\begin{enumerate}[(a)]
\item $M \geq X_{i_{r-t}}$;
\item There exists an arrow $M \to J(t-1)$ of $\H(\sttilt\Lambda)$.
\end{enumerate}

\item $J(r-1)=\bigvee_{1\leq \ell\leq r}X_{i_\ell}$ if $\sttilt\Lambda$ is a lattice.

\item We have a poset isomorphism 
\[\sttilt \Lambda/(e(r-1))\simeq \sttilt k\ORA{A_r}.\] 
\end{enumerate}
\end{lemma}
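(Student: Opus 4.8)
The plan is to establish the three assertions in the order (2), (1), (3): assertion (2) is self-contained, assertion (1) is a mutation-theoretic computation carried out inside $\sttilt\Lambda$, and (3) then follows by transporting the characterization obtained in (1) along $\rho$. The delicate point will be the ``if'' direction of (1).

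For (2), recall that for any idempotent $e$ of $\Lambda$ one has $\fac(\Lambda/(e))=\{X\in\mod\Lambda\mid Xe=0\}$, so that $\fac J(r-1)=\{X\mid\supp X\subseteq\{i_1,\dots,i_r\}\}$ and $\fac X_{i_\ell}=\{X\mid\supp X\subseteq\{i_\ell\}\}$; in particular $J(r-1)\geq X_{i_\ell}$ for every $\ell$. Conversely, if $N\in\sttilt\Lambda$ satisfies $N\geq X_{i_\ell}$ for all $\ell$, then each simple top $S_{i_\ell}$ lies in the torsion class $\fac N$, and since $\fac J(r-1)$ is the smallest torsion class containing $S_{i_1},\dots,S_{i_r}$ we get $\fac J(r-1)\subseteq\fac N$, i.e.\ $N\geq J(r-1)$. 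Thus $J(r-1)$ is the least upper bound of $\{X_{i_\ell}\}_{\ell}$, which equals $\bigvee_{1\leq\ell\leq r}X_{i_\ell}$ once $\sttilt\Lambda$ is a lattice.

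For (1), the ``only if'' direction is routine: the $\fac$-inclusions above give $J(t)\geq X_{i_{r-t}}$ and $J(t)\geq J(t-1)$, while $(J(t-1),e(t)\Lambda)$ is a basic almost complete support $\tau$-tilting pair which is a common direct summand of the support $\tau$-tilting pairs of $J(t)$ and of $J(t-1)$; by Theorem \ref{basicprop}(2) there is an arrow between $J(t)$ and $J(t-1)$, and since $J(t)\geq J(t-1)$ it is $J(t)\to J(t-1)$. For the ``if'' direction, assume $M\geq X_{i_{r-t}}$ and that there is an arrow $M\to J(t-1)$, so $M>J(t-1)$. By Theorem \ref{basicprop}(2) the support $\tau$-tilting pairs of $M$ and $J(t-1)$ share a common basic almost complete pair $\Pi$, necessarily obtained from the pair $(J(t-1),e(t-1)\Lambda)$ of $J(t-1)$ by deleting a single indecomposable summand. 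If that summand belongs to $J(t-1)$, then $e(t-1)\Lambda$ remains a direct summand of the projective part of the pair of $M$, forcing $\supp M\subseteq\supp J(t-1)$ and hence $M\leq J(t-1)$ --- a contradiction. So $\Pi=(J(t-1),(e(t-1)-e_j)\Lambda)$ for some vertex $j$ occurring in $e(t-1)$; then $i_{r-t}\in\supp M$ (because $M\geq X_{i_{r-t}}$), while $\supp M$ contains no vertex occurring in $e(t-1)-e_j$, which forces $j=i_{r-t}$, i.e.\ $\Pi=(J(t-1),e(t)\Lambda)$. By Theorem \ref{basicprop}(1) this $\Pi$ has exactly two completions, namely the pairs of $J(t)$ and of $J(t-1)$ identified above; as $M\neq J(t-1)$, we conclude $M=J(t)$. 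This enumeration of the mutations of $(J(t-1),e(t-1)\Lambda)$, together with the support bookkeeping that discards every candidate but $J(t)$, is the heart of the lemma.

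For (3), the construction $J(\bullet)$ makes equal sense in $k\ORA{\T}$ because $Q^\circ=\ORA{\T}$, and (1) applies verbatim there. Starting from $\rho(J_\Lambda(0))=\rho(X_{i_r})=S_{i_r}=J_{k\ORA{\T}}(0)$, induction on $t$ via the uniqueness in (1) together with $\rho(X_{i_{r-t}})=S_{i_{r-t}}$ yields $\rho(J_\Lambda(t))=J_{k\ORA{\T}}(t)$ for all $t\leq r-1$; hence $\rho$ restricts to a poset isomorphism $[0,J_\Lambda(r-1)]\xrightarrow{\sim}[0,J_{k\ORA{\T}}(r-1)]$. Since $[0,J_\Lambda(r-1)]=\sttilt_{e(r-1)\Lambda}\Lambda\simeq\sttilt\Lambda/(e(r-1))$ and likewise for $k\ORA{\T}$, and since $k\ORA{\T}/(e(r-1))\cong k\ORA{A_r}$ --- its quiver being the full subquiver of $\ORA{\T}$ on $\{i_1,\dots,i_r\}$, which is the linear quiver $i_1\to\cdots\to i_r$ as $\ORA{\T}$ is a tree --- we obtain $\sttilt\Lambda/(e(r-1))\simeq\sttilt k\ORA{A_r}$.
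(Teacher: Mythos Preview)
Your proof is correct, and parts (1) and (3) follow essentially the same mutation-theoretic line as the paper: in (1) you both argue by analyzing which indecomposable summand of the pair $(J(t-1),e(t-1)\Lambda)$ is removed and then invoke the two-completion property of almost complete pairs; in (3) you both transport $J_\Lambda(t)$ to $J_{k\ORA{\T}}(t)$ inductively via the characterization in (1) and identify the interval with $\sttilt k\ORA{A_r}$.

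The one genuine difference is in (2). The paper derives (2) from (1): using the arrow $J(t)\to J(t-1)$ established in (1), it observes $J(t-1)\leq J(t-1)\vee X_{i_{r-t}}\leq J(t)$ and that equality on the left is impossible, whence $J(t)=J(t-1)\vee X_{i_{r-t}}$; iterating gives $J(r-1)=\bigvee_\ell X_{i_\ell}$. Your argument instead shows directly that $J(r-1)$ is the least upper bound by identifying $\fac J(r-1)=\{X:\supp X\subseteq\{i_1,\dots,i_r\}\}$ with the smallest torsion class containing $S_{i_1},\dots,S_{i_r}$ (every module with composition factors among these simples is an iterated extension of them). This is cleaner and in fact proves the existence of the join without assuming lattice structure; it does rely on the bijection between support $\tau$-tilting modules and functorially finite torsion classes from \cite{AIR}, which the paper does not explicitly recall but which is standard. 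Either approach is fine; yours makes (2) independent of (1), while the paper's keeps everything at the level of mutation combinatorics.
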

\begin{proof}
(1) Looking at the supports, we obtain that $X_{r-t}$ and $J(t-1)$ belong to $\sttilt_{e(t)\Lambda}\Lambda$,
 and so they are less than $J(t)$.
Letting $Q'$ be the quiver of $\Lambda/(e(t))$, it is observed that $(Q')^\circ$ is of the form $i_{r-t}\to\cdots\to i_r$.
Since $(J(t-1), e_{i_{r-t}}\Lambda/(e(t)))$ is a support $\tau$-tilting pair of $\Lambda/(e(t))$,
 we have an arrow $J(t)\to J(t-1)$ of $\H(\sttilt\Lambda)$.

Conversely, let $M$ be a direct predecessor of $J(t-1)$ in $\H(\sttilt\Lambda)$.
Assume that $M\geq X_{i_{r-t}}$.
We then see that the support of $M$ contains $\{i_r,\cdots,i_{r-t}\}$. 
Since $M$ is a direct predecessor of $J(t-1)$,
the gap between $\supp(M)$ and $\supp(J(t-1))$ is at most one.
By $\supp (J(t-1))=\{i_r,\dots,i_{r-t+1}\}=\{i_r,\dots,i_{r-t}\}\setminus \{i_{r-t}\}$, 
we obtain $\supp (M)=\{i_r,\cdots,i_{r-t}\}$.
Taking the support $\tau$-tilting pair $(J(t-1), P)$,
it is seen that the almost complete support $\tau$-tilting pair $(J(t-1), P/P_{i_{r-t}})$ is also a direct summand of the support $\tau$-tilting pair $(M, P/P_{i_{r-t}})$.
By Theorem \ref{basicprop} (2), it finds out that $M$ is just $J(t)$.
%
%

(2) Note that taking $\vee$ makes sense since $\sttilt\Lambda$ is a lattice.
For $1\leq t\leq r-1$, it follows from (1) that $X_{i_{r-t}}\leq J(t)\to J(t-1)$. Therefore, we have $J(t)\geq J(t-1)\vee X_{i_{r-t}}\geq J(t-1)$, which implies that $J(t)=J(t-1)\vee X_{i_{r-t}}$. 
Consequently, one sees
\[J(r-1)=J(0)\vee X_{i_{r-1}}\vee \cdots\vee X_{i_1},\]
whence $J(r-1)=\bigvee_{1\leq \ell\leq r}X_{i_\ell}$ by $J(0)=X_{i_r}$.

(3)  
By $\sttilt_{e(t)\Lambda}(\Lambda)\simeq \sttilt\Lambda/(e(t))$, we get a poset isomorphism
\[\sttilt\Lambda/(e(t))\simeq [0,J(t)].\] 
As $J_\Lambda(0)=X_r$,
we observe $\rho(J_\Lambda(0))=J_{k\ORA{\T}}(0)$.
Applying (1) to $k\ORA{\T}$, one inductively obtains $\rho(J_\Lambda(t))=J_{k\ORA{\T}}(t)$.

Now, we complete the proof.
One has already seen a poset isomorpshim $[0, J_{k\ORA{\T}}(t)]\simeq \sttilt k\ORA{\T}/(e(t))$.
Since the algebra $k\ORA{\T}/(e(t))$ is actually $k[i_{r-t}\to i_{r-t+1}\to\cdots\to i_r]$, that is, $k\ORA{A_{r+1}}$,
we have 
\[\sttilt\Lambda/(e(t))\simeq [0,J_\Lambda(t)]\simeq [0,\rho(J_\Lambda(t))]= [0, J_{k\ORA{\T}}(t)]\simeq \sttilt k\ORA{A_{t+1}}.\]
In particular, one gets the desired poset isomorphism.
\end{proof}

To realize our goal, we first observe algebras of type $A_n$.

\begin{lemma}\label{teqlem}
 Let $\Lambda=k\ORA{A_n}$.
Put $T_1=\Lambda/P_1$ and $T_i:=(\Lambda/P_i )\oplus S_{i-1} (i\neq 1)$. Note that ${T_i}'s$ are support $\tau$-tilting modules.
Then we have the following:
\begin{enumerate}[(1)]

\item $M:=\bigwedge_{i\neq 1} T_{i}$ is a tilting module and $J:=\bigvee_{i\neq n}S_i$ is isomorphic to $\Lambda/(e_n)$.
In particular, $M\not\leq J$. 

\item There exists a unique path from $\Lambda$ to $0$ in $\H(\sttilt\Lambda)$ with length $n$.
Moreover, it factors through $T_1$.
\end{enumerate}
\end{lemma}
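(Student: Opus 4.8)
The plan is to make everything explicit for $\Lambda = k\ORA{A_n}$. I would identify the indecomposable modules with interval modules $M[a,b]$ ($1\le a\le b\le n$), so that $P_i = M[i,n]$, $S_i = M[i,i]$, the indecomposable injectives are $I_i = M[1,i]$, and $\tau M[a,b] = M[a+1,b+1]$ (with $\tau M[a,n] = 0$). First I would record that $\sttilt\Lambda$ is a lattice by Theorem~\ref{IRTT} (since $\Lambda$ is representation-finite), so that $\bigwedge$ and $\bigvee$ make sense, and verify the stated ``Note'': for $i\ge 2$ one has $\tau T_i = S_i$ and a short $\Hom$-computation gives $\Hom_\Lambda(T_i, S_i) = 0$, so $T_i$ is $\tau$-rigid of full support with $n$ summands; for $i = 1$ the pair $(T_1, P_1)$ is $\tau$-rigid with $|T_1| + |P_1| = n$, and moreover $T_1 = \Lambda/(e_1)$ as a $\Lambda$-module (because vertex $1$ is a source).

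For~(1) the crux is a torsion-class computation. Since the extra summand $S_{i-1}$ of $T_i$ is a quotient of $P_{i-1}$, one has $\fac T_i = \fac\big(\bigoplus_{j\ne i} P_j\big) = \{X \mid S_i \text{ does not occur in } \Top X\}$. Intersecting over $i = 2,\dots,n$ yields $\bigcap_{i=2}^n \fac T_i = \{X \mid \Top X \in \add S_1\} = \fac P_1 = \fac(D\Lambda)$, where $D\Lambda = \bigoplus_{l=1}^n I_l$ is the injective cogenerator (a tilting module, $\Lambda$ being hereditary) and the last equality holds because $P_1 = I_n \in \add(D\Lambda)$ and both sides are exactly the modules with top in $\add S_1$. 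As $\fac(D\Lambda)$ is the torsion class of a support $\tau$-tilting module and sits inside every $\fac T_i$, it is the meet, so $M = \bigwedge_{i\ne 1} T_i = D\Lambda$, a tilting module of full support. For $J$: the torsion class $\fac(\Lambda/(e_n))$ consists of all modules supported off the vertex $n$, hence contains every $S_i$ with $i\ne n$, giving $\Lambda/(e_n) \ge S_i$; conversely, if $N$ is support $\tau$-tilting with $N \ge S_i$ for all $i\ne n$, then $\fac N$ is a torsion class containing $S_1,\dots,S_{n-1}$, hence contains the smallest such torsion class, which is exactly $\fac(\Lambda/(e_n))$, so $N \ge \Lambda/(e_n)$; thus $J = \Lambda/(e_n)$. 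Finally $M \not\le J$, since $\supp M = Q_0 \not\subseteq Q_0\setminus\{n\} = \supp J$ and $M \le J$ would force $\supp M \subseteq \supp J$.

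For~(2) I would first invoke the support-gap bound for a Hasse arrow: along each $N \to N'$ in $\H(\sttilt\Lambda)$ one has $\supp N \supseteq \supp N'$ with $|\supp N \setminus \supp N'| \le 1$ (used already elsewhere in the paper). Then any path from $\Lambda$ to $0$ has length $\ge n$, and a length-$n$ path must lose exactly one vertex of support at each step. Next I would compute the $n$ arrows out of $\Lambda$, i.e. the left mutations $\mu_k^-$ of $(\Lambda,0)$ at $P_1,\dots,P_n$: for $k\ge 2$ the minimal left $\add(\Lambda/P_k)$-approximation of $P_k$ is the radical inclusion $P_k = \Rad P_{k-1} \hookrightarrow P_{k-1}$, which is injective with cokernel $S_{k-1}$, so $\mu_k^-(\Lambda,0) = T_k$, of full support; whereas $\Hom_\Lambda(P_1, P_j) = P_j e_1 = 0$ for $j\ne 1$, so the approximation of $P_1$ is $P_1 \to 0$, not injective, giving $\mu_1^-(\Lambda,0) = (T_1, P_1)$, which drops vertex $1$. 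Hence the only support-decreasing arrow out of $\Lambda$ is $\Lambda \to T_1$, so every length-$n$ path begins $\Lambda \to T_1$ and continues with a length-$(n-1)$ path from $T_1$ to $0$. Being a path of $\H(\sttilt\Lambda)$ starting at $T_1$, it lies in the down-set $[0, T_1]$, whose Hasse quiver is the full subquiver of $\H(\sttilt\Lambda)$ on its vertices and which is poset-isomorphic to $\sttilt(\Lambda/(e_1)) \cong \sttilt k\ORA{A_{n-1}}$ with maximum $T_1$ (as in the proof of Lemma~\ref{determiningjoin}). An induction on $n$, with base $\H(\sttilt k) = (k \to 0)$, then gives both existence and uniqueness, and the forced first step $\Lambda \to T_1$ exhibits the asserted factorization.

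The step I expect to be the real work is the torsion-class bookkeeping in~(1): identifying $\fac T_i$ precisely and checking $\bigcap_{i\ge 2}\fac T_i = \fac(D\Lambda)$, so that the meet is genuinely $D\Lambda$ rather than a proper submodule of it. Part~(2) is essentially a single mutation computation (the arrows leaving $\Lambda$) plus the support-gap bound and a routine induction; the only external fact I want on record is that a Hasse arrow of $\sttilt\Lambda$ changes the support by at most one vertex, which is part of the support $\tau$-tilting mutation of \cite{AIR} and is already used in this paper.
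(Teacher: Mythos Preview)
Your argument is correct. The approach, however, differs from the paper's in both parts.

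For (1), the paper does not identify $M$ explicitly: it takes the minimum element $M'$ of $\sttilt^{P_1}\Lambda$, observes that each $T_i$ ($i\neq 1$) has $P_1$ as a summand and hence $T_i\geq M'$, deduces $M\geq M'$, and then uses that $P_1$ has full support (so $\supp M'=Q_0$) to conclude that $M$ is tilting. For $J$ it simply invokes Lemma~\ref{determiningjoin}(2) applied to the path $1\to\cdots\to n-1$. Your torsion-class computation is more explicit and in fact proves more, namely $M=D\Lambda$; the trade-off is that you must verify $\fac T_i=\fac\bigl(\bigoplus_{j\neq i}P_j\bigr)$ and the identification $\bigcap_{i\geq 2}\fac T_i=\fac(D\Lambda)$, which the paper avoids entirely.

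For (2) the paper writes only ``We can check the assertion (2), directly.'' Your argument (support-gap bound along Hasse arrows, explicit computation of the $n$ left mutations of $\Lambda$ showing that only $\Lambda\to T_1$ drops a vertex, and then induction via $[0,T_1]=\sttilt_{P_1}\Lambda\simeq\sttilt k\ORA{A_{n-1}}$) is a clean way to carry out that direct check; the identification of the interval is indeed the content of the first displayed isomorphism in the proof of Lemma~\ref{determiningjoin}(3) together with the remark after Theorem~\ref{basicprop}, so your references are in order.
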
 

\begin{proof}
(1) Let $M'$ be a minimum element of $\sttilt^{P_1}\Lambda$.
Observe $T_i\geq M'$ for all $i\neq 1$.
Therefore, we  have $M\geq M'$, which implies that $\supp(M)=\{1,2,\dots, n\}$ since $P_1$ is a direct summand of $M'$.
Hence, it figures out that $M$ is a tilting module by \cite[Proposition 2.2]{AIR}.
 
We show that $J$ is isomorphic to $\Lambda/(e_n)$.
Note that the quiver of $\Lambda/(e_n)$ is 
\[1\to 2\to\cdots\to n-1.\]
Applying Lemma \ref{determiningjoin} to this path of $\ORA{A_n}$,
one has $J(n-2)=\bigvee_{1\leq i\leq n-1}S_i$,
whence $\Lambda/(e_n)=J(n-2)=J$.

The last assertion follows from $\supp(J)=Q_0\setminus \{n\}$.

We can check the assertion (2), directly. 
\end{proof}

Denote by $\trigid \Lambda$ the set of isomorphism classes of indecomposable $\tau$-rigid $\Lambda$-modules.

In the case of $Q^\circ=\ORA{A_n}$, we write $X(i, n):=P_i$ for any $i$, and put $X(i,j):=e_i\Lambda/e_i\Lambda e_{j+1}\Lambda$ for any $i\leq j<n$.

We show the main theorem in the case of type $A_n$.

\begin{theorem}
\label{t1}
Let $Q$ be a quiver with $Q^{\circ}=\ORA{A_n}$ and $I$ an admissible ideal of $kQ$. Put $\Lambda=kQ/I$. Assume that
the composition $\alpha_1\cdots\alpha_{n-1}\neq0$ in $\Lambda$ and $e_{i}\Lambda \alpha_{i}=\alpha_{i}\Lambda e_{i+1} $  for any $i$.
\begin{enumerate}[(1)]
\item 
$\Hom_{\Lambda} (X(i,j),\tau X(p,q))=0$
if and only if one of the following holds:
\begin{enumerate}[(a)]
\item $[i-1, j]\cap[p-1, q]=\emptyset$;
\item $[i-1, j]\subseteq [p-1, q] $ or $[p-1, q]\subseteq [i-1, j]$;
\item $i-1<p-1\leq j<q$.
\end{enumerate}

\item We have $\trigid \Lambda=\{X(i,j)\mid 1\leq i\leq j\leq n\}$. Moreover, one obtains
\[\sttilt\Lambda\simeq \sttilt k\ORA{A_n}.\] 
\end{enumerate}
\end{theorem}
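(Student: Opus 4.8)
The proof will establish (1) by a direct computation of the Auslander–Reiten translates, and then deduce (2) by combining (1) with the reduction machinery of Section~3. First I would verify the hypothesis that $\Lambda$ is a split-by-nilpotent extension of $A:=k\ORA{A_n}$: the conditions $\alpha_1\cdots\alpha_{n-1}\neq 0$ and $e_i\Lambda\alpha_i=\alpha_i\Lambda e_{i+1}$ are exactly what is needed to verify Condition~\ref{CG} for this extension, so that Theorem~\ref{mainresult}(2) gives a poset isomorphism $\tsilt A\xrightarrow{\sim}\tsilt\Lambda$, hence $\sttilt A\simeq\sttilt\Lambda$ via Theorem~\ref{bijection}. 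This already yields the displayed isomorphism in~(2) once we know the classification of $\tau$-rigid modules matches up, but the cleaner route is to prove~(1) first by hand since it pins down the combinatorics.

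For part~(1), the key is to compute a minimal projective presentation of $X(i,j)=e_i\Lambda/e_i\Lambda e_{j+1}\Lambda$. Using the relation $e_i\Lambda\alpha_i=\alpha_i\Lambda e_{i+1}$ repeatedly (as in the proof of Lemma~\ref{1to2}, equation~\eqref{rightideal}), one shows $e_i\Lambda e_{j+1}\Lambda=\alpha_i\cdots\alpha_j\Lambda=(\alpha_i\cdots\alpha_j)\Lambda e_{j+1}$, so the minimal projective presentation is
\[
e_{j+1}\Lambda\xrightarrow{\ \alpha_i\cdots\alpha_j\cdot-\ } e_i\Lambda\to X(i,j)\to 0
\]
(with the convention that for $j=n$ the map is zero and $X(i,n)=P_i$). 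Dualizing exactly as in Lemma~\ref{1to2} gives $\Tr X(i,j)\simeq \Lambda e_i/\Lambda(\alpha_i\cdots\alpha_j)$ and hence $\tau X(i,j)\simeq D(\Lambda e_i/\Lambda e_i\Lambda e_{j+1}\Lambda)$ — the analogous $A_n$-shaped module for $\Lambda^{\mathrm{op}}$, supported on $\{i,\dots,j\}$ with "top at $i$". Then $\Hom_\Lambda(X(i,j),\tau X(p,q))$ is computed from the presentation: it is the kernel/cokernel data of the induced map on $\Hom$-spaces, and because all the Hom-spaces between indecomposable projectives of $\Lambda$ are one-dimensional (a consequence of $Q^\circ=\ORA{A_n}$ together with the commutation relations, so $\Lambda$ is "thin" in the relevant directions), the vanishing reduces to a bookkeeping of whether the intervals $[i-1,j]$ and $[p-1,q]$ interact — the three cases (a), (b), (c) being precisely the configurations of two intervals on the line that do not produce a nonzero map. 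I would organize this as: $\Hom_\Lambda(X(i,j),\tau X(p,q))\neq 0$ iff there is a path-with-relations witness, iff the intervals overlap "the wrong way", i.e. iff none of (a)–(c) holds.

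For part~(2): first, any indecomposable $\tau$-rigid module $N$ has a minimal projective presentation $P_1\to P_0$ with $\add P_0\cap\add P_1=\emptyset$ (Lemma~\ref{factformpp} and Theorem~\ref{bijection}); tracing through the structure of $\Lambda$ (projectives $e_i\Lambda$ are uniserial because $Q^\circ=\ORA{A_n}$ and no path of $Q^\circ$ dies in $I$, with loops contributing nothing to $\tau$-rigidity by Proposition~\ref{basicfact}(2) applied to $J=(\text{image of loop ideal})$) forces $N\cong X(i,j)$ for some $i\le j$. Conversely each $X(i,j)$ is $\tau$-rigid by the $\emptyset$-case of~(1). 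Finally, the poset isomorphism $\sttilt\Lambda\simeq\sttilt k\ORA{A_n}$ follows from the split-by-nilpotent reduction: either cite Theorem~\ref{mainresult}(2) after verifying Condition~\ref{CG} (the loop-generated ideal $L/I$ is the nilpotent kernel, $A=kQ^\circ=k\ORA{A_n}$, and conditions (a),(b) of Condition~\ref{CG} follow from $e_i\Lambda\alpha_i=\alpha_i\Lambda e_{i+1}$ plus the local structure), or alternatively observe that~(1) shows the g-vector combinatorics and mutation rule on $\tsilt\Lambda$ are literally identical to those on $\tsilt k\ORA{A_n}$ and invoke Theorem~\ref{gvector} with Theorem~\ref{basicprop}.

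\textbf{Main obstacle.} The delicate point is part~(1): correctly identifying $\tau X(p,q)$ and then showing the three interval conditions are exactly the vanishing locus of the Hom. The computation of $\tau$ via the transpose needs the commutation relation $e_i\Lambda\alpha_i=\alpha_i\Lambda e_{i+1}$ in an essential way — without it $e_i\Lambda e_{j+1}\Lambda$ need not be principal and the presentation is not the clean one above — and one must be careful with the boundary cases $j=n$ (projective) and $i=1$, and with the shift by $1$ in the indices (the "$i-1$" reflects that $\tau X(i,j)$ is supported starting at $i$ but $X(i,j)$ "sees" vertex $i-1$ through its presentation). I expect verifying that cases (a)–(c) are mutually exhaustive-and-exclusive among the "good" configurations, and matching each to an actual $\Hom$/$\Ext$ computation, to be where the real work lies; the reduction in~(2) is then comparatively formal.
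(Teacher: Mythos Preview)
Your strategy for (1)---computing a minimal projective presentation of $X(i,j)$ and then the transpose---matches the paper's computation in outline, but two of your stated reductions are incorrect. First, the claim that Hom-spaces between indecomposable projectives of $\Lambda$ are one-dimensional is false: a loop $\ell$ at vertex $i$ with $\ell^{m}\neq 0$ makes $e_i\Lambda e_i$ (and hence $e_i\Lambda e_j \simeq e_i\Lambda e_i\cdot(\alpha_i\cdots\alpha_{j-1})$, via the commutation relation) at least $(m{+}1)$-dimensional. Second, and for the same reason, the projectives $e_i\Lambda$ are \emph{not} uniserial once loops are present, so your proposed classification of indecomposable $\tau$-rigid modules in (2) via uniseriality does not go through. (Your formula for $\tau X(i,j)$ also has shifted indices: from the presentation one gets $\Tr X(p,q)\simeq \Lambda e_{q+1}/\Lambda e_p\Lambda e_{q+1}$, so $\supp(\tau X(p,q))=[p+1,q+1]$, not $[p,q]$.)

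The paper avoids both pitfalls. For (1), rather than computing the Hom-space directly, it uses Proposition~\ref{basicfact}(2) to pass to the quotient by the idempotent supported off $[i,j]\cup[p,q]$, reducing to the case where this union is all of $\{1,\dots,n\}$; then one of $X(i,j),X(p,q)$ is projective and the vanishing is immediate. For (2), rather than classifying $\tau$-rigids a priori, the paper observes that (1) makes the vanishing condition \emph{independent of} $\Lambda$, so $X(i,j)\mapsto X(i,j)$ gives a poset isomorphism $\sttilt\Lambda\cap\add X\simeq\sttilt k\ORA{A_n}$ with $X=\bigoplus X(i,j)$; this induced subposet is $n$-regular and finite, hence by Theorem~\ref{basicprop}(3) coincides with all of $\sttilt\Lambda$, and only \emph{then} does $\trigid\Lambda=\{X(i,j)\}$ follow. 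Your alternative route for (2) via Theorem~\ref{mainresult}(2) is correct and non-circular---it is exactly Lemma~\ref{fl} plus Corollary~\ref{fc} specialized to $\ORA{\T}=\ORA{A_n}$---but the paper reserves that machinery for the general tree case and proves Theorem~\ref{t1} by the self-contained connectivity argument instead.
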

\begin{proof}
We show the assertion $(1)$. 
Suppose that $\Hom_{\Lambda} (X(i,j),\tau X(p,q))=0$ and neither (a) nor (b) holds.
Then, we should check that (c) holds.
Since we do not have either (a) or (b), one gets 
$p-1< i-1< q< j$ or $i-1< p-1< j< q$.
As $Q^\circ =\ORA{A_n}$, it is seen that $X(i,j)$ and $X(p,q)$ have $S_\ell$ as a composition fuctor, where $\ell$ runs from $i$ to $q$ or from $p$ to $j$.
Putting $e:=1-e_i-\cdots-e_q$ or $e:=1-e_p-\cdots-e_j$,
both $X(i,j)$ and $X(p,q)$ can be regarded as $\Lambda/(e)$-modules, from where we may assume that $[i,j]\cup[p,q]=\{1,2,\dots,n\}$ by Proposition \ref{basicfact} (2).
Thus, one has $j=n$ or $q=n$.
If $j=n$, then $q<n$ and $X(i,j)=P_i$.
By a direct calculation, we see 
\[\mathrm{Tr} X(p,q)\simeq \Lambda e_{q+1}/\Lambda e_p\Lambda e_{q+1},\]
which implies that $\supp(\tau X(p,q))=[p+1,q+1]$.
Since we are considering the case of $p-1<i-1\leq q$,
one observes that $i$ belongs to the support of $\tau X(p,q)$.
This makes a contradiction by $\Hom_\Lambda(X(i,j), \tau X(p,q))=0$, whence $q=n$, that is, $i-1< p-1\leq j< q$.

We show the converse.

The condition (a) says that $\supp(X(i,j))\cap\supp(\tau X(p,q))=\emptyset$, whence we are done.

Assume that the condition (b) holds. 
As the same argument above, we may suppose that $[i-1, j]=[0,n]$ or $[p-1,q]=[0,n]$.
If $q=n$, then $X(p,q)=P_p$, in which we have nothing to do.
Let $[i-1,j]=[0,n]$.
We get $X(i,j)=P_1$, and have already seen that $\supp(\tau X(p,q))=[p+1, q+1]$, whence $\Hom_\Lambda(X(i,j), \tau X(p,q))=0$.

We assume that the condition (c) holds.
Similarly, one can suppose that $q=n$. Then $X(p,q)=P_p$.
Thus, we have $\Hom_{\Lambda}(X(i,j),\tau X(p,q))=0.$

We show the assertion $(2)$.
From (1), observe that $X(i,j)$ is an indecomosable $\tau$-rigid module.
Also, note that if $\Lambda=k\ORA{A_n}$, then $X(i,j)$'s are all of the indecomposable $\Lambda$-modules.
Put $X:=\bigoplus_{1\leq i\leq j\leq n} X(i,j)$. 
Since the $\tau$-rigidity does not depend on the choice of $\Lambda$ by (1),
the attachment $X(i,j)\mapsto X(i,j)$ induces a poset isomorphism $\sttilt\Lambda\cap \add X\simeq\sttilt k\ORA{A_n}$,
from which it follows that $\sttilt\Lambda\cap \add X$ is a induced subposet of $\sttilt\Lambda$ and $n$-regular.
Hence, it figures out that $\sttilt\Lambda\cap \add X$ contains the connected component $\C_\Lambda$ of $\H(\sttilt\Lambda)$ having $\Lambda$.
The finiteness of $\sttilt\Lambda\cap \add X$ leads to the conclusion that $\C_\Lambda$ is a finite connected component, whence the equalites $\C_\Lambda=\sttilt\Lambda\cap \add X=\sttilt\Lambda$ hold by Theorem \ref{basicprop}.
Thus, the proof is completed.
%
\end{proof}

We understand the case of type $A_n$.

\begin{corollary}
\label{casea}
 Let $Q$ be a quiver with $Q^{\circ}=\ORA{A_n}$ and $I$ an admissible ideal of $kQ$.
Put $\Lambda=kQ/I$.
Then $\sttilt\Lambda\simeq \sttilt k\ORA{A_n}$ if and only if
$\alpha_1\cdots\alpha_{n-1}\neq0$ in $\Lambda$ and $e_{i}\Lambda \alpha_{i}=\alpha_{i}\Lambda e_{i+1} $  for any $i$.
\end{corollary}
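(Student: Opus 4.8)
The statement is an equivalence, and its ``if'' half is exactly Theorem \ref{t1}(2); so the content lies entirely in the ``only if'' half. Assume $\sttilt\Lambda\simeq\sttilt k\ORA{A_n}$. By Remark \ref{remark} we may reindex the vertices of $Q$ so that $Q^\circ=\ORA{A_n}$ and a poset isomorphism $\rho\colon\sttilt\Lambda\xrightarrow{\sim}\sttilt k\ORA{A_n}$ carries each $X_i$ to the simple module $S_i$. It then remains to prove $(\mathrm{b})$ $e_i\Lambda\alpha_i=\alpha_i\Lambda e_{i+1}$ for every $i$, and $(\mathrm{a})$ $\alpha_1\cdots\alpha_{n-1}\neq0$.

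For $(\mathrm{b})$ I would treat one index $i$ at a time via the idempotent reduction to $\{i,i+1\}$. Put $e=1-e_i-e_{i+1}$ and $P=e\Lambda$. Since $Q^\circ=\ORA{A_n}$ is linearly oriented, no path leaves $\{i,i+1\}$ and returns, so $(e_i+e_{i+1})\Lambda e\Lambda(e_i+e_{i+1})=0$ and therefore $\Lambda/(e)\cong(e_i+e_{i+1})\Lambda(e_i+e_{i+1})$; in particular the subspaces $e_i\Lambda e_{i+1}$, $e_i\Lambda\alpha_i=(e_i\Lambda e_i)\alpha_i$ and $\alpha_i\Lambda e_{i+1}=\alpha_i(e_{i+1}\Lambda e_{i+1})$ are unchanged on passing to $\Lambda/(e)$. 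On the other hand $\sttilt\Lambda/(e)\cong\sttilt_P\Lambda\cong[0,M_{i,i+1}]$ by Lemma \ref{determiningarrow}(2), and through $\rho$ this interval is carried onto the matching interval of $\sttilt k\ORA{A_n}$, namely $\sttilt(k\ORA{A_n}/(e))=\sttilt k[i\to i+1]$, which one checks directly is the pentagon, i.e. of type $\widetilde{A}_{3,2}$. Hence $\H(\sttilt\Lambda/(e))$ is of type $\widetilde{A}_{3,2}$, and Proposition \ref{mr}(1) applied to the two-vertex algebra $\Lambda/(e)$ forces $\alpha_i(\Lambda/(e))e_{i+1}=e_i(\Lambda/(e))\alpha_i$, which by the previous sentence is precisely $(\mathrm{b})$.

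Assume now $(\mathrm{b})$. Rewriting an arbitrary path from $i$ to $j$ (for $i<j$) by sliding its ``loop part'' to the left, one factor $\alpha_t$ at a time, gives $e_i\Lambda e_j=(e_i\Lambda e_i)\,\alpha_i\alpha_{i+1}\cdots\alpha_{j-1}$; thus $(\mathrm{a})$ is equivalent to $e_1\Lambda e_n\neq0$, which I would prove by induction on $n$ (the cases $n\le2$ being clear, as an arrow never lies in an admissible ideal). Let $n\ge3$. Mutating at a projective $P_\ell$ lowers the support exactly when $\ell$ is a source of $Q^\circ$ (the poset-invariant condition $\supp(M)\subsetneq Q_0$ being preserved by $\rho$), so there is a unique direct predecessor $T_1$ of $\Lambda$ in $\H(\sttilt\Lambda)$ with $\supp(T_1)\subsetneq Q_0$, namely the mutation at $P_1$, and $T_1=\Lambda/(e_1)$. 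Then $[0,T_1]\cong\sttilt\Lambda/(e_1)$, which under $\rho$ becomes $\sttilt(k\ORA{A_n}/(e_1))\cong\sttilt k\ORA{A_{n-1}}$; and $\Lambda/(e_1)\cong(e_2+\cdots+e_n)\Lambda(e_2+\cdots+e_n)$ just as above, so the inductive hypothesis applies to it and yields $\alpha_2\cdots\alpha_{n-1}\neq0$.

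Suppose, toward a contradiction, that $\alpha_1\cdots\alpha_{n-1}=0$, and let $k$ be minimal with $\alpha_1\cdots\alpha_k=0$; then $2\le k\le n-1$ and $\alpha_1\cdots\alpha_{k-1}\neq0$. Reducing $\Lambda$ at $1-e_1-e_k-e_{k+1}$ produces, as in the two-vertex case, a three-vertex algebra $\Lambda'$ with $(\Lambda')^\circ$ of shape $1\to k\to k+1$ (using $e_1\Lambda e_k=(e_1\Lambda e_1)\alpha_1\cdots\alpha_{k-1}\neq0$ and $e_k\Lambda e_{k+1}\ni\alpha_k\neq0$), in which $(\mathrm{b})$ still holds but the composite of the two arrows lies in $e_1\Lambda e_{k+1}=(e_1\Lambda e_1)\alpha_1\cdots\alpha_k=0$; whereas the corresponding reduction of $k\ORA{A_n}$ is $k[1\to k\to k+1]\cong k\ORA{A_3}$, with non-zero composite, and through $\rho$ the two support $\tau$-tilting posets coincide. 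This contradicts the $n=3$ case of the corollary — that an $\ORA{A_3}$-shaped algebra satisfying $(\mathrm{b})$ with vanishing composite of its two arrows cannot share the support $\tau$-tilting poset of $k\ORA{A_3}$ — which is the base of the induction and is checked directly, e.g. by counting support $\tau$-tilting modules ($k\ORA{A_3}$ has $14$, whereas an $\ORA{A_3}$-shaped algebra with vanishing composite has strictly fewer, $k\ORA{A_3}/\Rad^2$ having $12$). Hence $\alpha_1\cdots\alpha_{n-1}\neq0$, which completes the proof. The delicate point is exactly $(\mathrm{a})$: condition $(\mathrm{b})$ has a two-vertex shadow and dissolves into clean combinatorics via Proposition \ref{mr}, but the non-vanishing of the long product does not, and the crux — settling the $n=3$ case on its own — requires a genuine finite computation rather than a formal reduction.
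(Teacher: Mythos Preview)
Your argument for condition (b) is essentially the paper's, and it is fine. The problem is the three-vertex reduction you use to prove (a).

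When $k>2$, the quotient $\Lambda'=\Lambda/(1-e_1-e_k-e_{k+1})$ does \emph{not} have $(\Lambda')^{\circ}$ of shape $1\to k\to k+1$. The element $\alpha_1\cdots\alpha_{k-1}$ lies in $\Lambda e_2\Lambda\subseteq\Lambda(1-e_1-e_k-e_{k+1})\Lambda$, so it vanishes in $\Lambda'$; since every element of $e_1\Lambda e_k$ factors through this path, one gets $e_1\Lambda' e_k=0$ and vertex $1$ becomes isolated (up to loops). Your ``as in the two-vertex case'' analogy breaks precisely here: the condition $(e_1+e_k+e_{k+1})\Lambda(1-e_1-e_k-e_{k+1})\Lambda(e_1+e_k+e_{k+1})=0$ fails, because the path $\alpha_1\cdots\alpha_{k-1}$ leaves $\{1,k,k+1\}$ at vertex $2$ and returns at vertex $k$. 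The same phenomenon occurs on the $k\ORA{A_n}$ side: $k\ORA{A_n}/(1-e_1-e_k-e_{k+1})\cong k\times k[k\to k+1]$, not $k\ORA{A_3}$. So both quotients have the \emph{same} support $\tau$-tilting poset (a product of a $2$-element chain and a pentagon, with $10$ elements rather than $14$), and no contradiction arises. Even if you also remove vertex $n$ to force $k=n-1$, you still have $k>2$ for $n>3$, so the reduction never lands you in the $\ORA{A_3}$ base case.

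The paper's proof of (a) is genuinely different and more global: after establishing by induction (via Lemma~\ref{determiningjoin} applied to the consecutive intervals $[1,n-1]$ and $[2,n]$) that $\alpha_1\cdots\alpha_{n-2}\neq0$ and $\alpha_2\cdots\alpha_{n-1}\neq0$, it assumes $\alpha_1\cdots\alpha_{n-1}=0$ and explicitly computes the meet $M_\Lambda=\bigwedge_{i\neq 1}T_i$ to be $\bigoplus_{\ell=1}^{n-1}X(1,\ell)$, which then lies below $J_\Lambda=\Lambda/(e_n)$. Transporting through $\rho$ gives $M_{k\ORA{A_n}}\leq J_{k\ORA{A_n}}$, contradicting Lemma~\ref{teqlem}(1). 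The point is that the obstruction to $\alpha_1\cdots\alpha_{n-1}=0$ is not visible in any proper idempotent quotient; it is detected only by a relation between a particular meet and join in the full poset.
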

\begin{proof}
We have only to show ``only if'' part by Theorem \ref{t1}.
Assume that $\sttilt\Lambda\simeq \sttilt k\ORA{A_n}$, and apply induction on $n$.
It is trivial that the assertion holds for $n=2$.

Let $n>2$ and $\Lambda_{i}:=\Lambda/(1-e_{i}-e_{i+1})$.
Applying Lemma \ref{determiningarrow} to $i\rightsquigarrow i,\ j\rightsquigarrow i+1$,
we see that $\H(\sttilt_P\Lambda)$ is of type $\widetilde{A}_{3,2}$,
where $P:=e_i\Lambda\oplus e_{i+1}\Lambda$,
whence $\H(\sttilt\Lambda_i)$ has the same type.
Therefore, we have 
\[e_{i}\Lambda \alpha_{i}=e_{i}\Lambda_{i} \alpha_{i}\stackrel{{\rm  \ref{mr}}}{=}\alpha_{i}\Lambda_{i} e_{i+1}= \alpha_{i}\Lambda e_{i+1}.\]

Let $J_{\Lambda}(p,q):=\bigvee_{p \leq k\leq q} X_k $. 
As Lemma \ref{determiningarrow}, the position of $X_k$ is uniquely determined by the poset structure of $\sttilt\Lambda$,
and hense, so is also $J_\Lambda(p,q)$.
Thus, we have
\[[0,J_{\Lambda}(i,j)]\simeq[0,J_{k\ORA{A_n}}(i,j)].\]
Applying Lemma \ref{determiningjoin} to a path $i\to\cdots\to j$, we get 
\[J_{\Lambda}(i,j)=\Lambda/(1-e_{i}-\cdots-e_{j}).\]
Consequently, one sees 
\[\sttilt(\Lambda/ (1-e_{i} \cdots -e_{j}))=[0,J_{\Lambda}(i,j)]\simeq [0,J_{k\ORA{A_n}}(i,j)]\simeq \sttilt k\ORA{A_{\ell}},\]
where $\ell=j-i+1$. 
By the induction hypothesis,
we obtain $\alpha_1\cdots\alpha_{n-2}\neq0$ and $\alpha_2\cdots\alpha_{n-1}\neq0$ in $\Lambda$.

Let $X(i,j):=e_i\Lambda/e_i\Lambda e_{j+1}\Lambda$ and
regard it as a $\Lambda/(e_n)$-module if $j<n$.
By $e_i(\Lambda/(e_n))\alpha_i=\alpha_i(\Lambda/(e_n))e_{i+1}$ and $\alpha_1\cdots\alpha_{n-2}\neq0$, it follows from Theorem \ref{t1} that
$\{X(i,j)\ |\ 1\leq i\leq j<n \}$ is a complete set of representatives of isomorphism classes of indecomposable $\tau$-rigid $\Lambda/(e_n)$-modules.
Moreover, we also obtain that $\Hom_{\Lambda/(e_n)}(X(i,j), X(p,q))=0$ if and only if the pair $((i,j), (p,q))$ satisfies one of the conditions (a), (b) and (c) in Theorem \ref{t1}.
A similar argument works for $1<i\leq j\leq n$.
Putting $T_1:=\Lambda/P_1$ and $T_i:=\Lambda/P_i\oplus X_{i-1}$ for $i>1$, it is seen that $T_i$ is a support $\tau$-tilting $\Lambda$-module with an arrow $\Lambda\to T_i$ in $\H(\sttilt\Lambda)$ for any $i$.

In the rest of this proof, 
we assume $\alpha_1\cdots\alpha_{n-1}=0$
and show that this makes a contradiction.

Let $T(r):=P_1\oplus P_{r+1}\oplus\cdots\oplus P_n\oplus\bigoplus_{\ell=2}^rX(1,\ell-1)$.
We observe that $T(r)/P_1$ is a $\tau$-rigid $\Lambda/(e_r)$-module, and so it is also $\tau$-rigid over $\Lambda$.
As $\alpha_1\cdots\alpha_{n-1}=0$, one sees that $P_1\oplus \bigoplus_{\ell=2}^r X(1, \ell-1)$ is a $\tau$-rigid $\Lambda/(e_n)$-module, and also over $\Lambda$.
Thus, it finds out that $T(r)$ is in $\sttilt\Lambda$.
By Theorem \ref{basicprop}, we have a path 
\begin{equation}\label{path1}
T_2=T(2)\to T(3)\to\cdots\to T(n-1)
\end{equation}
in $\H(\sttilt\Lambda)$.

Let $M_{\Lambda}:=\bigwedge_{i\neq 1}T_i$ and $J_{\Lambda}:=J_{\Lambda}(1,n-1)=\bigvee_{1\leq i\leq n-1} X_i$.
Denote by $\rho:\sttilt\Lambda\stackrel{\sim}{\to} \sttilt k\ORA{A_n}$ a poset isomorphism. From Lemma \ref{determiningarrow}, we observe $\rho(J_\Lambda)=J_{k\ORA{A_n}}$. By Lemma \ref{teqlem} (2), one also obtains $\rho(M_\Lambda)=M_{k\ORA{A_n}}$, since there is a path from $T_1$ to $0$ with length $n-1$.

Now, we show that $M_\Lambda=\bigoplus_{\ell=1}^{n-1}X(1,\ell)$.
The equality $\alpha_1\cdots\alpha_{n-1}=0$ implies that $P_1=X(1,n)=X(1,n-1)$, whence $T(n-1)=P_n\oplus\bigoplus_{\ell=2}^nX(1,\ell-1)$.
Therefore, we have an arrow $T(n-1)\to \bigoplus_{\ell=2}^nX(1,\ell-1)=:T(n)$.
Since $X_{r-1}$ is a projective $\Lambda/(e_r)$-module,
we see that $T(r)\leq T_r$.
Hence, one gets $T(r)=T(r-1)\wedge T_r\ (3\leq r\leq n)$ by the path (\ref{path1}) and an arrow $T(n-1)\to T(n)$,
which leads to the conclusion that $T(n)=T(2)\wedge T_{3}\wedge\cdots\wedge T_n=M_\Lambda$.

Since $T(n)$ belongs to $\sttilt_{P_n}\Lambda\simeq \sttilt\Lambda/(e_n)$, which has a maximum element $\Lambda/(e_n)$,
it finds out that $M_\Lambda=T(n)$ is less than  $\Lambda/(e_n)=J_\Lambda$.
As is checked above, we have $\rho(J_\Lambda)=J_{k\ORA{A_n}}$ and $\rho(M_\Lambda)=M_{k\ORA{A_n}}$, whence $M_{k\ORA{A_n}}\leq J_{k\ORA{A_n}}$.
However, this contradicts to Lemma \ref{teqlem} (1).
Thus, we obtain $\alpha_1\cdots \alpha_{n-1}\neq0$.
\if0
\old{'±'±'Ü'Å}

 Then we
 have that $X(i,j):=e_{i}\Lambda/e_{i}\Lambda e_{j+1} \Lambda$
 is an indecomposable $\tau$-rigid module with $\supp(X(i,j))=\{i,\dots,j\}$ for any $(i\leq j)\neq (1,n)$.
 Note that for any $1\leq i\leq j<n$, we can regard  $X(i,j)$ as $\Lambda/(e_{n})$-module. 
 Since 
  $e_{i}(\Lambda/(e_{n}))\alpha_{i}=\alpha_{i}(\Lambda/(e_{n}))e_{i+1}$
   and $\alpha_1\cdots\alpha_{n-1}\neq 0$, 
 Theorem \ref{t1} implies that 
  $\{X(i,j)\mid 1\leq i\leq j<n\}$ is a complete set of representatives of isomorphism classes of indecomposable $\tau$-rigid
  $\Lambda/(e_{n})$-modules and $\Hom_{\Lambda}(X(i,j),\tau X(l,k))=0$ if and only if $((i,j),(k,l))$ satisfies one
   of the conditions (a), (b), (c) in Theorem \ref{t1}. 
  Similarly, for any $1< i\leq j\leq n$, we can regard  $X(i,j)$ as $\Lambda/(e_{1})$-module. 
 Then  $\{X(i,j)\mid 1\leq i\leq j<n\}$ is a complete set of representatives of
   isomorphism classes of indecomposable $\tau$-rigid
  $\Lambda/(e_1)$-modules and $\Hom_{\Lambda}(X(i,j),\tau X(l,k))=0$ if and only if $((i,j),(k,l))$ satisfies one
   of the conditions (a), (b), (c) in Theorem \ref{t1}.         
In particular , if we denote by $T_1=\Lambda/P_1 $ and
 $T_{i}=(\Lambda/P_i )\oplus X_{i-1}$ for $i>1$, then  $T_i\in \sttilt \Lambda$.  
 We put $M_{\Lambda}:=\bigwedge_{k\neq 1}T_{k}$
 and  $J_{\Lambda}:=J_{\Lambda}(1,n-1)=\bigvee_{1\leq k\leq n-1} X_k$.
 Now let $\rho:\sttilt\Lambda\stackrel{\sim}{\to} \sttilt k\ORA{A_n}$ 
 be a poset isomorphism. Since there is a path from $T_1$ to $0$ with length $n-1$, 
by Lemma \ref{determiningarrow} and Lemma \ref{teqlem} (2), we have that
\[\rho(J_{\Lambda})=J_{k\ORA{A_n}}\ \mathrm{ and}\ \rho(M_{\Lambda})=M_{k\ORA{A_n}}.\]
We claim that $M_{\Lambda}=\bigoplus_{k=1}^{n-1} X(1,k)$. 
Let $T(r)=P_1\oplus P_{r+1}\oplus\cdots \oplus P_n \oplus \bigoplus_{k=2}^{r} X(1,k-1)$.
Since $T(r)/P_1$ is $\tau$-rigid  as $\Lambda/(e_r)$-module, we see that $T(r)/P_1$ is also $\tau$-rigid
as $\Lambda$-module. Note that $P_1\oplus \bigoplus_{k=2}^{r} X(1,k-1)$ is a $\tau$-rigid $\Lambda/(e_n)$-module.
Therefore we obtain that $P_1\oplus \bigoplus_{k=2}^{r} X(1,k-1)$ is a $\tau$-rigid $\Lambda$-module.
In particular, one has that $T(r)\in \sttilt \Lambda$.
Then we have a path
\[T_2=T(2)\to T(3)\to \cdots \to T(n-1)=P(n)\oplus \bigoplus_{k=2}^{n}X(1,k-1)\to \bigoplus_{k=2}^{n}X(1,k-1). \]
Since $X_{r-1}$ is projective $\Lambda/(e_r)$-module, we see that $T(r)\leq T_r$ and 
$\bigoplus_{k=2}^{n} X(1,k-1)\leq T_n$. We conclude that
  $T(r)=T(r-1)\wedge T_r\  (r\geq 3)$ and $\bigoplus_{k=2}^{n}X(1,k-1)=T(n-1)\wedge T_n$. In particular,
  we have that $M_{\Lambda}=\bigoplus_{k=1}^{n-1} X(1,k)$.
  
Now it is easy to check that  $M_{\Lambda}=\bigoplus_{k=1}^{n-1} X(1,k) \leq \Lambda/(e_{n})=J_{\Lambda}$.
In fact, $\Lambda/(e_n)$ is a maximum element of $\sttilt_{P_n} \Lambda$ and 
$\bigoplus_{k=1}^{n-1} X(1,k)\in \sttilt_{P_n} \Lambda$.
 This contradicts to Lemma \ref{teqlem} (1).  
\fi
\end{proof}

In the rest of this paper, we deal with a finite dimensional algebra $\Lambda=kQ/I$ with $Q^\circ=\ORA{\T}$. Put $A:=k\ORA{\T}$.
%

Let us first describe relations of the quiver $Q$ when $\Lambda$ shares the support $\tau$-tilting poset structure with $A$

\begin{proposition}\label{casetree}
If $\sttilt\Lambda\simeq \sttilt k\ORA{\T}$, then the following hold:
\begin{enumerate}[(1)]
\item We have an equality $e_{i}\Lambda \alpha=\alpha\Lambda e_{j}$ for any arrow $\alpha:i\to j\ (i\neq j)$;
\item Every path of $\ORA{\T}$ does not belong to $I$.
\end{enumerate}
\end{proposition}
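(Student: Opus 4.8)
The plan is to deduce both assertions from two cases already settled --- the two-vertex calculation of Proposition \ref{mr} for (1), and the type-$A_n$ classification of Corollary \ref{casea} for (2) --- by passing to suitable idempotent quotients of $\Lambda$. Throughout I use Remark \ref{remark}: the isomorphism $\sttilt\Lambda\simeq\sttilt k\ORA{\T}$ forces $Q^\circ\simeq\ORA{\T}$, so after reindexing we may assume $Q^\circ=\ORA{\T}$ and $\rho(X_i)=S_i$ for a poset isomorphism $\rho$. In particular $Q$ has no $n$-cycle for $n>1$, so the only closed paths in $Q$ are products of loops.

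For (1), fix an arrow $\alpha\colon i\to j$ of $Q$ with $i\neq j$, put $e:=1-e_i-e_j$ and $\Lambda':=\Lambda/(e)$. By Lemma \ref{determiningarrow}(2) the interval $[0,M_{i,j}]=\sttilt_{e\Lambda}\Lambda\simeq\sttilt\Lambda'$ has one of the shapes (i)--(iii) listed there, and by parts (3)--(4) these correspond respectively to ``no arrow between $i$ and $j$'', ``an arrow $i\to j$'' and ``an arrow $j\to i$''; since $Q$ has the arrow $\alpha\colon i\to j$ (and none $j\to i$, as $Q^\circ$ is a tree) we are in shape (ii), that is, $\H(\sttilt\Lambda')$ is of type $\widetilde{A}_{3,2}$. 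Now $\Lambda'$ is a two-vertex algebra with an arrow $\bar\alpha\colon i\to j$, so Proposition \ref{mr}(1) yields $\bar\alpha\,\Lambda'e_j=e_i\,\Lambda'\,\bar\alpha$. It remains to lift this relation along the surjection $\pi\colon\Lambda\twoheadrightarrow\Lambda'$. Here I invoke tree-ness of $Q^\circ$: because $Q$ has no cycles except loops, every path of $Q$ from $i$ to $i$, from $j$ to $j$, or from $i$ to $j$ avoids all vertices outside $\{i,j\}$ (otherwise the unique undirected $i$-$j$ walk in the tree $Q^\circ$ would not be the single edge $\alpha$). Hence $\pi$ restricts to isomorphisms $e_i\Lambda e_i\xrightarrow{\sim}e_i\Lambda'e_i$ and $e_j\Lambda e_j\xrightarrow{\sim}e_j\Lambda'e_j$, and to an injection $e_i\Lambda e_j\hookrightarrow e_i\Lambda'e_j$. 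Since $e_i\Lambda\alpha=(e_i\Lambda e_i)\alpha$ and $\alpha\Lambda e_j=\alpha(e_j\Lambda e_j)$ both lie in $e_i\Lambda e_j$ and have equal images $e_i\Lambda'\bar\alpha=\bar\alpha\Lambda'e_j$ under $\pi$, injectivity of $\pi$ on $e_i\Lambda e_j$ gives $e_i\Lambda\alpha=\alpha\Lambda e_j$.

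For (2), let $i_1\to\cdots\to i_r$ be a path of $\ORA{\T}=Q^\circ$ with arrows $\alpha_t\colon i_t\to i_{t+1}$; I must show $\alpha_1\cdots\alpha_{r-1}\neq0$ in $\Lambda$. Put $e:=1-e_{i_1}-\cdots-e_{i_r}$. Lemma \ref{determiningjoin}(3), applied to this path, gives $\sttilt\Lambda/(e)\simeq\sttilt k\ORA{A_r}$. The quiver of $\Lambda/(e)$ is the full subquiver of $Q$ on $\{i_1,\dots,i_r\}$; since $Q^\circ$ is a tree, the only arrows among these vertices are $\alpha_1,\dots,\alpha_{r-1}$ together with possible loops, so $(\Lambda/(e))^\circ=\ORA{A_r}$ and the defining ideal of $\Lambda/(e)$ is admissible. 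Corollary \ref{casea} now applies to $\Lambda/(e)$ and forces $\bar\alpha_1\cdots\bar\alpha_{r-1}\neq0$ in $\Lambda/(e)$; consequently $\alpha_1\cdots\alpha_{r-1}\neq0$ in $\Lambda$, i.e.\ the path does not belong to $I$.

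The main obstacle is the lifting step in (1): transporting the relation $e_i\Lambda\alpha=\alpha\Lambda e_j$ back across the quotient $\Lambda\to\Lambda/(e)$. Its content is entirely the observation that, once loops are removed, $Q$ is a tree, so that no path between two of the chosen vertices can detour through a third; making this precise --- and keeping the identifications of $e_i\Lambda\alpha$ and $\alpha\Lambda e_j$ with subspaces of $e_i\Lambda'e_j$ straight --- is the only genuinely delicate point. Everything else is bookkeeping with idempotent quotients and appeals to Lemma \ref{determiningarrow}, Lemma \ref{determiningjoin}, Proposition \ref{mr} and Corollary \ref{casea}.
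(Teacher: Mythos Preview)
Your proof is correct and follows essentially the same route as the paper: pass to the idempotent quotient $\Lambda/(1-e_i-e_j)$ and invoke Proposition~\ref{mr} via Lemma~\ref{determiningarrow} for (1), and pass to $\Lambda/(1-e_{i_1}-\cdots-e_{i_r})$ and invoke Corollary~\ref{casea} via Lemma~\ref{determiningjoin} for (2). The paper compresses the lifting step in (1) to the single line $e_i\Lambda\alpha=e_i\Lambda_\alpha\alpha=\alpha\Lambda_\alpha e_j=\alpha\Lambda e_j$, implicitly using the same tree argument you spell out (namely $e_i\Lambda e\Lambda e_i=e_j\Lambda e\Lambda e_j=e_i\Lambda e\Lambda e_j=0$); your more detailed justification of this identification is a welcome clarification rather than a different idea.
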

\begin{proof}
Let $\Lambda_{\alpha}:=\Lambda/(1-e_{i}-e_{j})$.
By Proposition \ref{mr} and  Lemma \ref{determiningarrow}, we obtain
\[e_{i}\Lambda \alpha =e_{i}\Lambda_{\alpha} \alpha = \alpha \Lambda_\alpha e_{j} = \alpha \Lambda e_{j}.\]

Let $w$ be a path $i_1\to\cdots\to i_{r}$ in $\ORA{\T}$. 
Applying Lemma \ref{determiningjoin} to $w$, we get an isomorphism
\[\sttilt\Lambda/(1-e_{i_{r}}-e_{i_{r-1}}-\cdots-e_{i_1})\simeq \sttilt k\ORA{A_r},\]
from which $w$ is not zero in $\Lambda/(1-e_{i_{r}}-e_{i_{r-1}}-\cdots-e_{i_1})$ by Corollary \ref{casea}.
Hence, one has $w\not\in I$.
%
%
\end{proof}

An algebra with two conditions as in Proposition \ref{casetree} is a split-by-nilpotent extension.

\begin{lemma}\label{fl}
Assume that the following two conditions are satisfied:
\begin{enumerate}[(i)]
\item $e_{i}\Lambda \alpha=\alpha\Lambda e_{j}$ for any arrow $\alpha:i\to j\ (i\neq j)$;
\item $w\not\in I$ for any path $w$ in $\ORA{\T}$.
\end{enumerate}
Then we have:
\begin{enumerate}[(1)]
\item $\Lambda$ is a split-by-nilpotent extension of $A:=k\ORA{\T}$.
\item This extension satisfies Condition \ref{CG}.  
\end{enumerate}
\end{lemma}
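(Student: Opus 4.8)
The statement has two parts, and the plan is to treat them in turn, both resting on the single observation that condition (i) lets one "slide loops past arrows'' inside $\Lambda$. For Part~(1), by Proposition~\ref{sbne} it suffices to check that $Q$ has no $n$-cycle for $n>1$ and that $I\subseteq L$, where $L$ is the ideal of $kQ$ generated by all loops. The first is immediate: the arrows of an $n$-cycle with $n>1$ join pairwise distinct vertices, so none is a loop, hence the cycle lies in $Q^\circ=\ORA{\T}$, contradicting that a tree is acyclic. For $I\subseteq L$, first record the key consequence of condition (i): for every non-loop arrow $\alpha\colon a\to b$ one has $(e_a\Lambda e_a)\bar\alpha=\bar\alpha(e_b\Lambda e_b)$ in $\Lambda$ (because $e_a\Lambda\bar\alpha=(e_a\Lambda e_a)\bar\alpha$ and $\bar\alpha\Lambda e_b=\bar\alpha(e_b\Lambda e_b)$ by the position of the idempotents). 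Iterating this along a path $p_0=\beta_1\cdots\beta_m$ of $\ORA{\T}$ from $a$ to $b$ gives $(e_a\Lambda e_a)\bar p_0=\bar p_0(e_b\Lambda e_b)$; consequently every path of $Q$ from $a$ to $b$ is congruent modulo $I$ to some $\bar p_0\,q$ with $q\in e_b\Lambda e_b$, and $q$ can be taken in $\Rad(e_b\Lambda e_b)$ as soon as the path contains at least one loop — no unit term is produced when sliding a loop across $\bar\beta_t$, since $\bar\beta_t\notin\Rad^2\Lambda$ because $I\subseteq\Rad^2(kQ)$.

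Now take $x\in I$; since $I=\bigoplus_{a,b}e_aIe_b$ we may assume $x\in e_aIe_b$. If $a=b$, every path of $Q$ from $a$ to $a$ uses only loops at $a$ (otherwise deleting its loops would give a nontrivial directed path of $\ORA{\T}$ from $a$ to $a$), so $x\in L$. If $a\ne b$ and $\ORA{\T}$ has no path from $a$ to $b$, then $e_a(kQ)e_b=0$ and $x=0$. Otherwise let $p_0$ be the unique such path of $\ORA{\T}$ and write $x=c\,p_0+x'$ with $c\in k$ and $x'$ a $k$-combination of paths of $Q$ from $a$ to $b$ each containing a loop. Passing to $\Lambda$ and using the loop-sliding remark, $0=\bar x=c\,\bar p_0+\bar p_0\,q$ for some $q\in\Rad(e_b\Lambda e_b)$, i.e. $\bar p_0\,(c\,e_b+q)=0$. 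If $c\ne0$, then $c\,e_b+q$ is a unit of the local ring $e_b\Lambda e_b$ (local because $e_b$ is primitive, $\Lambda$ being basic), forcing $\bar p_0=0$, that is $p_0\in I$, contradicting condition (ii). Hence $c=0$ and $x\in L$. This proves $I\subseteq L$, and Proposition~\ref{sbne} yields Part~(1).

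For Part~(2), write $P_i=e_{u_i}A$ and $P'_j=e_{v_j}A$ for indecomposable projective $A$-modules. Under the splitting $A=k\ORA{\T}\hookrightarrow\Lambda$ one has $\widehat{e_uA}\cong e_u\Lambda$, and I would identify $\Hom_\Lambda(\widehat{e_uA},\widehat{e_vA})$ with $e_v\Lambda e_u$, $\End_\Lambda(\widehat{e_uA})$ with $e_u\Lambda e_u$ (the module action being multiplication in $\Lambda$), and $\widehat f$, for $f\colon e_uA\to e_vA$ corresponding to $c\,\bar p_0$ with $p_0$ the tree path from $v$ to $u$ (or to $0$), with left multiplication by $c\,\bar p_0$. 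Then Condition~\ref{CG}(a) asserts exactly $\bar p_0\,(e_u\Lambda e_u)=e_v\Lambda e_u$, which is the loop-sliding property with all loops pushed to the right end $u$; the cases $u=v$ (where $\mathrm{id}$ generates) and "no tree path'' (where both Hom-spaces vanish) are trivial. For Condition~\ref{CG}(b), view the given endomorphism $l_0$ of $\widehat{P_0}=e_{u_0}\Lambda$ as left multiplication by an element $l_{u_0}\in e_{u_0}\Lambda e_{u_0}$ and propagate it through the tree: condition (i) shows that for each arrow $\alpha\colon a\to b$ of $\ORA{\T}$ one can pass from any $l_a\in e_a\Lambda e_a$ to some $l_b\in e_b\Lambda e_b$ with $l_a\bar\alpha=\bar\alpha l_b$ (take $l_b=e_bme_b$ where $l_a\bar\alpha=\bar\alpha m$), and symmetrically in the other direction; since $\ORA{\T}$ is a tree, a traversal rooted at $u_0$ assigns to each vertex $w$ an element $l_w\in e_w\Lambda e_w$ compatibly and without conflict, so that $l_a\bar\alpha=\bar\alpha l_b$ for all arrows, hence $l_a\bar p=\bar p\,l_b$ for every path $p$ of $\ORA{\T}$. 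Taking $l_i$ (resp. $l'_j$) to be left multiplication by $l_{u_i}$ (resp. $l_{v_j}$) and recalling that $\widehat{f_{i,j}}$ is left multiplication by $c_{i,j}\,\bar p_{i,j}$ with $p_{i,j}$ the tree path from $v_j$ to $u_i$, the required identities $\widehat{f_{i,j}}\circ l_i=l'_j\circ\widehat{f_{i,j}}$ follow from $\bar p_{i,j}\,l_{u_i}=l_{v_j}\,\bar p_{i,j}$, which is the telescoped relation.

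The genuinely technical point is the loop-sliding property and its iteration: once condition (i) is rephrased as $(e_a\Lambda e_a)\bar\alpha=\bar\alpha(e_b\Lambda e_b)$ and one verifies that sliding loops along arrows never creates a constant term — which is where $I\subseteq\Rad^2$ and the primitivity of the $e_i$ enter — both the local-ring unit argument of Part~(1) and the generation and intertwining assertions of Condition~\ref{CG} become essentially formal. The remaining work, keeping track of idempotents and path-composition conventions and organising the tree traversal, is routine but must be carried out with care.
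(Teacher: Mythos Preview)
Your proof is correct and follows essentially the same route as the paper's: both parts rest on the loop-sliding identity $(e_a\Lambda e_a)\bar\alpha=\bar\alpha(e_b\Lambda e_b)$ derived from condition (i), and Part~(2)(b) is handled by the same tree-traversal propagation. The only cosmetic differences are that you slide loops to the right and invoke the local-ring unit argument to kill the scalar $c$, whereas the paper slides left and iterates $(ae_i-l)(ae_i+l)w_{i,j}\in I$ to reach $a^Nw_{i,j}\in I$; these are equivalent ways of exploiting that a radical element plus a nonzero scalar is invertible.
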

\begin{proof}
We show the assertion $(1)$.
As is mentioned in Proposition \ref{sbne}, it is sufficient to show that $I\subseteq L$, where $L$ is a two-sided ideal of $kQ$ generated by all loops on $Q$. Let $x\in I$.
Without loss of generality, we may assume that $x\in e_{i}Ie_{j}$. If there is no path from $i$ to $j$, then $x=0$.
Thus, we consider the case that there is a path from $i$ to $j$.
By $Q^{\circ}=\ORA{\T}$, we have a unique path 
\[w=w_{i,j}:i=i_1\xrightarrow{\alpha_1 } i_2\xrightarrow{\alpha_2}\cdots\xrightarrow{\alpha_{\ell-1}}i_\ell=j\]
from $i$ to $j$ which does not factor through any loop on $Q$.
Therefore, we can write $x=aw_{i,j}+y$ with $a\in k$ and $y\in L$.
As $y\in e_i\Lambda e_j$, one sees that $y$ is a linear combination of expressions $l_1\alpha_1 l_2\alpha_2\cdots l_{\ell-1}\alpha_{\ell-1}l_\ell$ with $l_t\in e_t\Rad (kQ)e_t$.
By the assumption (i), there exists $l'\in e_{i_t}\Rad(kQ) e_{i_t}$ with $l'\alpha_t=\alpha_tl_{t+1}$ in $\Lambda$.
Repeating this argument, we get $l\in e_i\Rad(kQ)e_i$ satisfying $y=lw_{i,j}$ in $\Lambda$.
Thus, one has $x=(ae_i+l)w_{i,j}\in I$.
This implies that $(a^2e_i-l^2)w_{i,j}=(ae_i-l)(ae_i+l)w_{i,j}$ also belongs to $I$, which yields that $a^Nw_{i,j}\in I$ since $\Rad^N kQ$ is contained in $I$ for sufficiently large $N>0$.
By the assumption (ii), we obtain $a=0$, whence $x=y\in L$.
   
We show the assertion $(2)$.
As the same argument above, it is observed that any element of $\Hom_\Lambda(e_j\Lambda, e_i\Lambda)\simeq e_i\Lambda e_j$ can be presented by a linear combination of $w_{i,j}l$ with $l\in \End_\Lambda(e_j\Lambda)\simeq e_j\Rad(kQ)e_j$.
Every homomorphism $f: e_jA\to e_iA$ admits a scalar $a$ such that $f(e_j)=aw_{i,j}$, which gives rise to a $\Lambda$-homomorphism $\widehat{f}:e_j\Lambda \xrightarrow{aw_{i,j}\cdot-}e_i\Lambda$.
Thus, Condition \ref{CG} (a) is satisfied.
%

Let $i\in Q_0$ and $l\in\End_\Lambda(\widehat{P_i})$.
For any neighbor vertex $j$ of $i$, we can take $l_j\in\End_\Lambda(\widehat{P_j})$ such that
\[\begin{cases}
\ l\circ \widehat{f_\alpha}=\widehat{f_\alpha}\circ l_j & \mbox{if } \alpha: i\to j\in \T_1; \\
\ \widehat{f_\alpha}\circ l=l_j\circ \widehat{f_\alpha} & \mbox{if } \alpha: j\to i\in \T_1.
\end{cases}\]
Note that $\T$ is a tree.
Inductively, we pick out endomorphisms $l_j\in\End_\Lambda(\widehat{P_j})$ for all $j\in\T_0$ satisfying $l_j\circ \widehat{f_\alpha}=\widehat{f_\alpha}\circ l_{j'}$ for any arrow $\alpha: j\to j'$.
This construction gives us endomorphisms as in Condition \ref{CG} (b).
%
\end{proof}


We now summarize our main theorems.

\begin{corollary}\label{fc}
The poset $\sttilt\Lambda$ is isomorphic to $\sttilt k\ORA{\T}$
 if and only if the following conditions hold:
 \begin{enumerate}[(a)]
 \item  $Q^{\circ}\simeq \ORA{\T}$;
 \item $e_{i}\Lambda \alpha=\alpha\Lambda e_{j}$
for any arrow $\alpha:i\to j\ (i\neq j)$;
\item  $w\not\in I$ for any path $w$ in $Q^\circ$.
\end{enumerate}
\end{corollary}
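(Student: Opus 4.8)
The plan is simply to assemble the results already proved: Corollary \ref{fc} is a repackaging of Proposition \ref{casetree}, Lemma \ref{fl}, Theorem \ref{mainresult} and Theorem \ref{bijection}, so no new idea is required beyond keeping careful track of the identification $Q^\circ\simeq\ORA{\T}$.

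For the ``only if'' part, I would assume $\sttilt\Lambda\simeq\sttilt k\ORA{\T}$ and first apply Remark \ref{remark} — which is built on Lemma \ref{determiningarrow} — to see that a poset isomorphism $\rho$ induces a quiver isomorphism $Q^\circ\xrightarrow{\sim}\ORA{\T}$, establishing (a). With this identification in force, conditions (b) and (c) are precisely the two conclusions of Proposition \ref{casetree}.

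For the ``if'' part, assume (a), (b), (c). Using (a), after relabelling the vertices of $Q$ we may assume $Q^\circ=\ORA{\T}$, so we are in the standing setting with $A:=k\ORA{\T}$; note that (b) and (c) are stable under such relabelling. Then (b) and (c) are exactly hypotheses (i) and (ii) of Lemma \ref{fl}, which tells us that $\Lambda$ is a split-by-nilpotent extension of $A$ and that this extension satisfies Condition \ref{CG}. By Theorem \ref{mainresult}(2) the functor $-\otimes_A\Lambda$ induces a poset isomorphism $\tsilt A\xrightarrow{\sim}\tsilt\Lambda$. Composing this with the poset isomorphisms $\sttilt A\xrightarrow{\sim}\tsilt A$ and $\sttilt\Lambda\xrightarrow{\sim}\tsilt\Lambda$ furnished by Theorem \ref{bijection}, we obtain $\sttilt\Lambda\simeq\sttilt A=\sttilt k\ORA{\T}$, as desired.

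The proof is thus essentially a chain of citations; the only point needing attention is making sure, in the ``only if'' direction, that the vertex bijection coming from $\rho(X_i)=S_{\sigma(i)}$ genuinely upgrades to an isomorphism of quivers $Q^\circ\simeq\ORA{\T}$ before Proposition \ref{casetree} — whose statement presupposes $Q^\circ=\ORA{\T}$ — may legitimately be invoked. All of the substantive work has already been carried out in the preceding sections, in particular in Theorem \ref{mainresult} and Lemma \ref{fl}.
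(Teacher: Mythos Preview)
Your proposal is correct and follows essentially the same route as the paper's proof: the ``only if'' direction is handled by Remark \ref{remark} (for condition (a)) followed by Proposition \ref{casetree} (for (b) and (c)), and the ``if'' direction is Lemma \ref{fl} feeding into Theorem \ref{mainresult}(2). Your explicit mention of Theorem \ref{bijection} to pass from $\tsilt$ to $\sttilt$ is simply making overt what the paper treats as a standing identification.
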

\begin{proof}
``if part'' By Lemma \ref{fl}, $\Lambda$ is a split-by-nilpotent extension of $A:=k\ORA{\T}$ satisfying Condition \ref{CG}.
As Theorem \ref{mainresult}, we get the desired poset isomorphism.

``only if part'' The three conditions follow from Remark \ref{remark} and Proposition \ref{casetree}.
\end{proof}

\begin{remark}
Independent of this paper, Eisele, Janssens and Raedschelders investigated a certain condition of ideals $I$ of an algebra $\Lambda$ to yield an isomorphism $\sttilt\Lambda\simeq \sttilt\Lambda/I$ \cite{EJR}:
They mentioned that such an isomorpshim is obtained if $I$ is contained in $(Z(\Lambda)\cap \Rad \Lambda)\cdot\Lambda$,
where $Z(\Lambda)$ denotes the center of $\Lambda$.
The ``if part'' of Corollary \ref{fc} can be also proved by using their result.

In fact, we construct an element $z$ of $\Lambda$ lying in $Z(\Lambda)\cap \Rad \Lambda$ as follows:

(i) For each vertex of $Q$, we define a positive integer $m_i$ as
the maximum of the integers $m$ satisfying $e_i(\Rad^m\Lambda)e_i\neq0$.

(ii) Fix a vertex $i$ of $Q$ at which $m_i$ takes the maximum among $m_j$ ($j\in Q_0$), and bring a non-zero element $l_i$ in $e_i(\Rad^{m_i}\Lambda)e_i$.

(iii) For a vertex $j$ of $Q$, $\ell_j$ denotes the length of a unique walk from $j$ to $i$ in $\T$: Note that $\T$ is a tree.
Let $\ell_j\geq1$.
Inductively, we define an element $l_j$ in $e_j\Lambda e_j$ as follows. Let $j'$ be a vertex of $Q$ with $\ell_{j'}=\ell_j-1$ and assume that there is an edge $\xymatrix{j \ar@{-}[r] & j'}$ in $\T$.
Then, by the condition (b) of Corollary \ref{fc} we have an element $l_j$ in $e_j\Lambda e_j$ such that 
\[\begin{cases}
\ l_{j'}\alpha=\alpha l_j & \mbox{if } \xymatrix{j' \ar[r]^\alpha & j}\ \mbox{in } \ORA{\T} \\
\ l_j\alpha=\alpha l_{j'} & \mbox{if } \xymatrix{j'  & j \ar[l]_\alpha}\ \mbox{in } \ORA{\T}.
\end{cases}\]
Note that $l_j$ also belongs to $e_j(\Rad^{m_i}\Lambda)e_j$.

(iv) We show that $z:=\sum_{j\in Q_0}l_j$ is in the center $Z(\Lambda)$. 
Let $l$ be a loop at a vertex $j$. Then, we have $lz=ll_j$,
which belongs to $e_j(\Rad^{m_i+1}\Lambda)e_j$.
Since $m_i$ is the maximum value, $lz$ finds out to be zero: Similarly, $zl=0$.
Let $\alpha: j\to j'$ be an arrow of $Q$. Then, we get $\alpha z=\alpha l_{j'}$ and $z\alpha=l_j \alpha$, which coincide by (iii).

Now, thanks to \cite[Theorem 4.1]{EJR}, we obtain an isomorphism $\sttilt\Lambda\xrightarrow{\sim}\sttilt\Lambda/(z)$.
Note that the three conditions (a), (b) and (c) of Corollary \ref{fc} are satisfied for $\Lambda/(z)$.

By repeating this argument, we realize $k\ORA{\T}$ as a quotient algebra of $\Lambda$ by some ideal $I$, and so one gets an isomorphism $\sttilt\Lambda\xrightarrow{\sim}\sttilt k\ORA{\T}$. 
%
%
%
\end{remark}

The following result is an immediate consequence of Theorem \ref{IRTT} and Corollary \ref{fc}, which gives an answer to Question \ref{questionlattice}.

\begin{corollary}
Let $\Lambda:=kQ/I$ be an algebra with $Q^\circ=\ORA{\T}$ and assume that $e_i\Lambda\alpha=\alpha\Lambda e_j$ and $w\not\in I$ for any arrow $\alpha:i\to j\ (i\neq j)$ and any path $w$ in $\ORA{\T}$. Then the poset $\sttilt\Lambda$ has lattice structure if and only if $\T$ is one of Dynkin diagrams ADE.
\end{corollary}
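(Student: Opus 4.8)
The plan is to reduce this statement to the hereditary case and then combine Theorem \ref{IRTT} with Gabriel's classification. First I would observe that the two standing hypotheses on $\Lambda$ are exactly conditions (b) and (c) of Corollary \ref{fc}, while condition (a) holds trivially because we assumed $Q^\circ = \ORA{\T}$. Hence Corollary \ref{fc} applies and yields a poset isomorphism $\sttilt\Lambda \simeq \sttilt k\ORA{\T}$; in particular $\sttilt\Lambda$ carries a lattice structure if and only if $\sttilt k\ORA{\T}$ does, so the problem is now entirely about the tree quiver algebra $A := k\ORA{\T}$.

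Next I would use that a tree quiver is acyclic, so $A$ is a finite-dimensional hereditary algebra, and invoke Theorem \ref{IRTT}: $\sttilt A$ is a lattice precisely when $A$ is representation-finite or $|A| \le 2$. The remaining task is to translate this dichotomy into a condition on the underlying graph $\T$. By Gabriel's theorem the path algebra of a connected acyclic quiver is representation-finite if and only if its underlying graph is a Dynkin diagram of type $A$, $D$ or $E$; applied to $\ORA{\T}$ this says exactly that $\T$ is one of $A_n$, $D_n$, $E_6$, $E_7$, $E_8$. Moreover, if $|A| = |\T_0| \le 2$ then $\T$ is $A_1$ or $A_2$, which are themselves Dynkin of type ADE, so the clause ``$|A| \le 2$'' is absorbed into the first condition. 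Chaining these equivalences gives that $\sttilt\Lambda$ is a lattice if and only if $\T$ is Dynkin of type ADE.

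There is no serious obstacle here: the corollary is a formal consequence of results already established. The only points that require a moment of care are verifying that hypothesis (a) of Corollary \ref{fc} is indeed in force (immediate from $Q^\circ=\ORA{\T}$) and recalling that for trees ``representation-finite path algebra'' coincides with ``Dynkin type ADE'' via Gabriel's theorem; everything else is the direct substitution of the cited statements.
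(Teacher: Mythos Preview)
Your proposal is correct and follows exactly the route the paper indicates: the corollary is stated in the paper as an immediate consequence of Theorem~\ref{IRTT} and Corollary~\ref{fc}, and your argument simply unpacks that implication, using Gabriel's theorem to identify representation-finite tree quiver algebras with Dynkin types ADE and absorbing the $|A|\le 2$ clause into type~$A$.
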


Finally, we give an example of algebras sharing the support $\tau$-tilting poset with tree quiver algebras.

Following to \cite[Section1]{GLS}, we construct a 1-Iwanaga-Gorenstein algebra $H$ from a symmetrizable generalized Cartan matrix $C$ and its orientation $\Omega$, that is, $C=(c_{ij})$ is a square integer matrix with order $n$ satisfying:
\begin{enumerate}[(C1)]
\item $c_{ii}=2$ for all $i$;
\item $c_{ij}\leq0$ for all $i\neq j$;
\item $c_{ij}\neq0$ if and only if $c_{ji}\neq0$;
\item There is a diagonal integer matrix $D=\mathrm{diag}(c_1,\cdots,c_n)$ with $c_i\geq1$ for all $i$ such that $DC$ is symmetric; such a $D$ is called a \emph{symmetrizer} of $C$.
\end{enumerate}
An orientation $\Omega$ of $C$ is a subset of $\{1,\cdots,n\}\times \{1,\cdots,n\}$ such that the following hold:
\begin{enumerate}[(O1)]
\item $\{(i,j), (j,i)\}\cap \Omega\neq\emptyset$ if and only if $c_{ij}<0$;
\item For any sequence $(i_1, i_2),(i_2, i_3),\cdots,(i_t,i_{t+1})$ of elements of $\Omega$ with $t\geq1$, we have $i_1\neq i_{t+1}$.
\end{enumerate}
It is evident that $(i,i)$ does not belong to $\Omega$.
We also notice that if $(i,j)$ is in $\Omega$, then $\Omega$ does not have $(j,i)$.

From the data of $C=(c_{ij}), D=\mathrm{diag}(c_1,\cdots,c_n)$ and $\Omega$, we define an algebra $H$.

\begin{definition}
For every $c_{ij}<0$, put $g_{ij}:=|\operatorname{gcd}(c_{ij}, c_{ji})|$ and $f_{ij}:=|c_{ij}|/g_{ij}$.

A quiver $Q:=Q(C, \Omega)$ is presented by
\begin{itemize}
\item $Q_0:=\{1,\cdots,n\}$;
\item $Q_1:=\{\alpha_{ij}^{(g)}: i\to j\ |\ (i,j)\in\Omega, 1\leq g\leq g_{ij} \}\cup \{\varepsilon_i:i\to i\ |\ 1\leq i\leq n \}$.
\end{itemize}
Note that $Q$ has precisely one loop at each vertex and no cycle.
We also observe that the underlying graph of $Q^\circ$ is a tree if and only if $c_{ij}\neq0\ (i\neq j)$ implies that $c_{ij}$ and $c_{ji}$ are coprime.

An algebra $H:=H(C,D,\Omega):=kQ/I$ is defined by
\begin{itemize}
\item $Q:=Q(C,\Omega)$;
\item $I$ is generated by
\begin{enumerate}[(H1)]
\item $\varepsilon_i^{c_i}$ for any $i$;
\item $\varepsilon_i^{f_{ij}}\alpha_{ij}^{(g)}-\alpha_{ij}^{(g)}\varepsilon_j^{f_{ji}}$ for any $(i,j)\in \Omega$ and any $g$ with $1\leq g \leq g_{ij}$.
\end{enumerate}
\end{itemize} 
\end{definition}

We have the following property of $H$.

\begin{theorem}\cite[Theorem 1.1]{GLS}
The algebra H is a finite dimensional 1-Iwanaga-Gorenstein algebra,
that is, the right and left injective dimensions of $H$ are 1.
\end{theorem}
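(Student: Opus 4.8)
The plan is to prove the statement in three stages: first that $H$ is finite-dimensional, then that $H$ is a tensor algebra over a self-injective subalgebra along an $H_0$-bimodule that is projective from both sides, and finally to invoke the general fact that such tensor algebras are $1$-Iwanaga-Gorenstein.

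First I would settle finite-dimensionality. Since $\Omega$ is an orientation it is acyclic by (O2), and $Q=Q(C,\Omega)$ carries exactly one loop at each vertex and no other oriented cycle. Using the commutation relations (H2) one can move, modulo $I$, every occurrence of a loop $\varepsilon_t$ appearing inside a given path to a prescribed side of each non-loop arrow incident with $t$; hence modulo $I$ every path of $Q$ is a scalar multiple of a monomial in ``normal form'' $\varepsilon_{i_0}^{a_0}\alpha_1\varepsilon_{i_1}^{a_1}\cdots\alpha_m\varepsilon_{i_m}^{a_m}$, where $i_0\xrightarrow{\alpha_1}i_1\to\cdots\xrightarrow{\alpha_m}i_m$ is a path of non-loop arrows. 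Acyclicity of $\Omega$ forces $m\le n-1$, and (H1) forces each exponent $a_t<c_{i_t}$, so there are only finitely many normal forms and $\dim_kH<\infty$.

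Next I would set up the tensor-algebra picture. Put $H_i:=e_iHe_i$ and $H_0:=\bigoplus_iH_i$; the normal forms show $H_i\cong k[\varepsilon_i]/(\varepsilon_i^{c_i})$, a local Frobenius algebra, so $H_0$ is self-injective. For each arrow $\alpha=\alpha_{ij}^{(g)}$ with $(i,j)\in\Omega$ let $M_\alpha:=H_i\alpha H_j\subseteq H$ be the $(H_i,H_j)$-sub-bimodule it generates, and set $N:=\bigoplus_{(i,j)\in\Omega}\bigoplus_gM_{\alpha_{ij}^{(g)}}$, an $H_0$-bimodule. Because $\Omega$ is acyclic, the only relations of $H$ are the ``intra-vertex'' relations (H1) and the ``one-arrow'' relations (H2); consequently the multiplication map induces an isomorphism of algebras $T_{H_0}(N)=H_0\oplus N\oplus(N\otimes_{H_0}N)\oplus\cdots\xrightarrow{\sim}H$, the standard $k$-basis of $T_{H_0}(N)$ matching the normal forms of the first stage.

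The heart of the argument --- and the step I expect to be the main obstacle --- is the claim that $N$ is projective as a left $H_0$-module and as a right $H_0$-module; equivalently, that each $M_\alpha$ (with $\alpha=\alpha_{ij}^{(g)}$) is free --- $H_i$ being local --- both as a left $H_i$-module and as a right $H_j$-module. This is where the precise exponents in (H2) and the identity $c_if_{ij}=c_jf_{ji}$ forced by the symmetrizability of $C$ enter, via an explicit determination of a $k$-basis of $M_\alpha$ adapted to the $(H_i,H_j)$-action; the bookkeeping of how the relation $\varepsilon_i^{f_{ij}}\alpha=\alpha\varepsilon_j^{f_{ji}}$ interacts with the nilpotency bounds $c_i,c_j$ is the only genuinely computational part. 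Granting this, I would finish with the general principle that whenever $R$ is a finite-dimensional self-injective algebra, $N$ is an $R$-bimodule projective on each side, and $T_R(N)$ is finite-dimensional, then $T_R(N)$ is $1$-Iwanaga-Gorenstein: this is read off from the fundamental exact sequence of $T:=T_R(N)$-bimodules $0\to T\otimes_RN\otimes_RT\to T\otimes_RT\to T\to0$, whose outer terms are projective $T$-bimodules, by applying $\Hom_T(-,T)$ and using $\operatorname{inj.dim}{}_RR=0$. Taking $R=H_0$ and $T=H$ gives $\operatorname{inj.dim}{}_HH\le1$ and $\operatorname{inj.dim}H_H\le1$, i.e. $H$ is $1$-Iwanaga-Gorenstein. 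Everything except the bimodule freeness --- the normal-form argument, the tensor-algebra recognition, and the passage to the Gorenstein property --- is then formal.
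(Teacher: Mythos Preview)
The paper under review contains no proof of this theorem: it is quoted from \cite{GLS} purely as background for the closing example, and no argument is supplied. There is therefore nothing in the present paper against which to compare your proposal.

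For what it is worth, your outline is essentially the proof given in the original source \cite{GLS}. There $H$ is identified with the tensor algebra $T_S(M)$ over $S=\prod_i k[\varepsilon_i]/(\varepsilon_i^{c_i})$; each piece ${}_{H_i}M_\alpha{}_{H_j}$ is shown to be free on both sides---your ``heart of the argument,'' and the identity $c_if_{ji}=c_jf_{ij}$ forced by symmetrizability is precisely what makes the left and right ranks agree---and the $1$-Gorenstein property is then read off from the general principle about tensor algebras over self-injective rings along bi-projective bimodules. Your compressed justification of that last step (tensor the fundamental bimodule sequence with an arbitrary module $X$, apply $\Hom_T(-,T)$, use the induction--restriction adjunction together with the fact that $T$ restricted to $R$ is projective, hence injective since $R$ is self-injective) is correct once unpacked. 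So the plan is sound; it simply has no counterpart in this paper.
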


It is also observed that the three conditions (a), (b) and (c) of Corollary \ref{fc} for $\Lambda=H$ hold if and only if $C$ is a symmetric matrix satisfying
\begin{itemize}
\item[(S)] For $i\neq j$, $c_{ij}\neq0$ implies $c_{ij}=-1$;
\end{itemize}

Thus, our goal is achieved.

\begin{proposition}
Let $C$ be a symmetrizable generalized Cartan matrix with symmetrizer $D$ and $\Omega$ its orientation.
Put $Q:=Q(C,\Omega)$ and $H:=H(C,D,\Omega)$.
If $C$ is symmetric and satisfies the condition (S), then we have a poset isomorphism $\sttilt H\xrightarrow{\sim}\sttilt kQ^\circ$.
\end{proposition}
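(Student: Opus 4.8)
The plan is to check that $\Lambda=H$ satisfies the three conditions (a), (b), (c) of Corollary \ref{fc} and then apply that corollary with $\ORA{\T}:=Q^\circ$. Since $C$ is symmetric and satisfies (S), for every $i\ne j$ with $c_{ij}\ne 0$ we have $c_{ij}=c_{ji}=-1$, so $g_{ij}=|\gcd(c_{ij},c_{ji})|=1$ and $f_{ij}=|c_{ij}|/g_{ij}=1$. Hence $Q=Q(C,\Omega)$ carries exactly one arrow $\alpha_{ij}\colon i\to j$ for each $(i,j)\in\Omega$ together with one loop $\varepsilon_i$ at each vertex, and the relations (H2) collapse to $\varepsilon_i\alpha_{ij}=\alpha_{ij}\varepsilon_j$ for every $(i,j)\in\Omega$.

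For condition (a): removing the loops from $Q$ leaves $Q^\circ$, whose underlying graph is the simply-laced graph of $C$; by the coprimality criterion recorded earlier this graph is a tree (the nonzero pairs $c_{ij}=c_{ji}=-1$ being coprime), so $Q^\circ$ is a tree quiver $\ORA{\T}$ and $kQ^\circ=k\ORA{\T}$. For condition (b): fix an arrow $\alpha=\alpha_{ij}\colon i\to j$. Because $Q^\circ$ is a tree, the only cycles of $Q$ based at a vertex are powers of the loop there, so $e_iHe_i$ is generated as a $k$-algebra by $\varepsilon_i$ and $e_jHe_j$ by $\varepsilon_j$; thus $e_iH\alpha=(e_iHe_i)\alpha$ is spanned by the elements $\varepsilon_i^m\alpha$ $(m\ge 0)$ and $\alpha He_j=\alpha(e_jHe_j)$ by the elements $\alpha\varepsilon_j^m$. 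Iterating $\varepsilon_i\alpha=\alpha\varepsilon_j$ gives $\varepsilon_i^m\alpha=\alpha\varepsilon_j^m$ for all $m$, so these spanning sets coincide and $e_iH\alpha=\alpha He_j$.

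For condition (c): both families of generators of $I$, namely the $\varepsilon_i^{c_i}$ and the $\varepsilon_i\alpha_{ij}-\alpha_{ij}\varepsilon_j$, lie in the kernel of the algebra surjection $kQ\to kQ^\circ$ that kills the loops, so this surjection factors through an algebra epimorphism $\pi\colon H\to kQ^\circ$. A path $w$ of $Q^\circ$ is sent by $\pi$ to the corresponding path in the hereditary path algebra $kQ^\circ$, which is a non-zero basis element; hence $w\ne 0$ in $H$, i.e.\ $w\notin I$. With (a), (b), (c) verified, Corollary \ref{fc} yields the desired poset isomorphism $\sttilt H\xrightarrow{\sim}\sttilt kQ^\circ$. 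The only non-formal point is (c)---that arrow-paths survive in $H$---which is why I reduce it to the existence of $\pi$ rather than to a direct normal-form computation inside $H$; a secondary nuisance is that in (a) one must remember that the tree condition on $Q^\circ$ is exactly the coprimality condition, which (S) supplies for free.
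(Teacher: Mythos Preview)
Your proof is correct and follows exactly the route the paper intends: the paper states (without giving details) that conditions (a), (b), (c) of Corollary~\ref{fc} hold for $H$ precisely when $C$ is symmetric and satisfies (S), and then declares the proposition achieved; you simply supply the verifications of (a), (b), (c) that the paper omits. Your argument for (c) via the surjection $\pi\colon H\to kQ^\circ$ is in fact the same mechanism underlying Proposition~\ref{sbne} and Lemma~\ref{fl}(1), so even there you are tracking the paper's machinery.
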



\end{document}